\newcommand{\ie}{{\em i.e.}\ }
\newcommand{\cf}{{\em cf.}\ }
\newcommand{\eg}{{\em e.g.}\ }
\newcommand{\ko}{\: , \;}
\numberwithin{equation}{section}
\newtheorem{theorem}[subsection]{Theorem}
\newtheorem{definition}[subsection]{Definition}
\newtheorem{lemma}[subsection]{Lemma}
\newtheorem{proposition}[subsection]{Proposition}
\newtheorem{corollary}[subsection]{Corollary}
\newtheorem{conjecture}[subsection]{Conjecture}
\newcommand{\reminder}[1]{}
\newcommand{\opname}[1]{\operatorname{\mathsf{#1}}}
\renewcommand{\mod}{\opname{mod}\nolimits}
\newcommand{\nil}{\opname{nil}\nolimits}
\newcommand{\per}{\opname{per}\nolimits}
\newcommand{\add}{\opname{add}\nolimits}
\newcommand{\Gr}{\opname{Gr}\nolimits}
\newcommand{\dimv}{\underline{\dim}\,}
\newcommand{\tw}{\opname{tw}\nolimits}
\newcommand{\Za}{\opname{Za}\nolimits}
\newcommand{\Z}{\mathbb{Z}}
\newcommand{\N}{\mathbb{N}}
\newcommand{\Q}{\mathbb{Q}}
\newcommand{\C}{\mathbb{C}}
\newcommand{\E}{\mathbb{E}}
\newcommand{\A}{\mathbb{A}}
\newcommand{\la}{\leftarrow}
\newcommand{\iso}{\stackrel{_\sim}{\rightarrow}}
\newcommand{\id}{\mathbf{1}}
\newcommand{\Hom}{\opname{Hom}}
\newcommand{\RHom}{\opname{RHom}}
\newcommand{\Ext}{\opname{Ext}}
\newcommand{\Aut}{\opname{Aut}}
\newcommand{\End}{\opname{End}}
\newcommand{\boxten}{\boxtimes}
\newcommand{\ten}{\otimes}
\newcommand{\ca}{{\mathcal A}}
\newcommand{\cb}{{\mathcal B}}
\newcommand{\cc}{{\mathcal C}}
\newcommand{\cd}{{\mathcal D}}
\newcommand{\cF}{{\mathcal F}}
\newcommand{\ch}{{\mathcal H}}
\newcommand{\cp}{{\mathcal P}}
\newcommand{\ct}{{\mathcal T}}
\newcommand{\cu}{{\mathcal U}}
\newcommand{\cv}{{\mathcal V}}
\newcommand{\eps}{\varepsilon}
\renewcommand{\phi}{\varphi}
\newcommand{\del}{\partial}
\renewcommand{\hat}[1]{\widehat{#1}}
\renewcommand{\tilde}[1]{\widetilde{#1}}
\newcommand{\Frac}{\opname{Frac}}
\newcommand{\Ad}{\opname{Ad}}
\newcommand{\Braid}{\opname{Braid}}
\newcommand{\ccl}{\opname{Ccl}}
\newcommand{\clt}{\opname{Clt}}
\newcommand{\ncc}{\opname{Ncc}}
\newcommand{\trp}{\opname{Trp}}
\renewcommand{\sf}{\opname{Sf}}
\newcommand{\FG}{\opname{FG}}
\date{January 16, 2011, last modified on \today}
\title[On cluster theory and quantum dilogarithm identities]{On cluster theory and 
quantum dilogarithm\\ identities}
\author{Bernhard Keller}
\begin{document}

\begin{abstract}
  These are expanded notes from three survey lectures given at the
  14th International Conference on Representations of Algebras (ICRA XIV)
  held in Tokyo in August 2010. We first study identities between products of 
  quantum dilogarithm series associated with Dynkin quivers following Reineke. 
  We then examine similar identities for quivers with potential and
  link them to Fomin-Zelevinsky's theory of cluster algebras. Here we mainly follow 
  ideas due to Bridgeland, Fock-Goncharov, Kontsevich-Soibelman and Nagao.
\end{abstract}

\begin{classification}
Primary 16G20; Secondary  18E30, 17B37, 14N35.
\end{classification}

\begin{keywords}
Quantum dilogarithm, cluster algebra, Hall algebra,
triangulated category, Calabi-Yau category, Donaldson-Thomas invariant.
\end{keywords}

\maketitle

\section*{Introduction}

The links between the theory of cluster algebras \cite{FominZelevinsky02, FominZelevinsky03,
BerensteinFominZelevinsky05, FominZelevinsky07}
and functional identities for the Rogers dilogarithm first
became apparent through Fomin-Zelevinsky's proof \cite{FominZelevinsky03b} of Zamolodchikov's
periodicity conjecture for $Y$-systems
\cite{Zamolodchikov91} (we refer to
\cite{Nakanishi11} and the references given there for the latest developments
in periodicity in cluster theory and its applications to $T$-systems, $Y$-systems
and dilogarithm identities). This link
was exploited by Fock-Goncharov \cite{FockGoncharov09, FockGoncharov09a},
who emphasize the central r\^ole of the (quantum) dilogarithm both for commutative and for
quantum cluster algebras and varieties.
The quantum dilogarithm is also a key ingredient in Kontsevich-Soibelman's
interpretation \cite{KontsevichSoibelman08} of cluster transformations
in the framework of Donaldson-Thomas theory. Indeed, Kontsevich-Soibelman show that,
under suitable technical hypotheses, if two quivers with potential
are related by a mutation \cite{DerksenWeymanZelevinsky08}, then
their non commutative DT-invariants (\cf section~\ref{ss:intertwiners})
are linked by the composition of a monomial transformation with
the adjoint action of a quantum dilogarithm.
This composition coincides with Fock-Goncharov's cluster transformation
for quantum $Y$-variables \cite{FockGoncharov09a}. Therefore,
Kontsevich-Soibelman's categorical setup for DT-theory contains
a `categorification' of the quantum $Y$-seed mutations and thus,
modulo passage to the `double torus' \cite{FockGoncharov09a} or
to `principal coefficients' \cite{FominZelevinsky07}, of the quantum
$X$-seed mutations. By extending this idea to compositions
of mutations Nagao \cite{Nagao10} has succeeded in deducing
the main theorems on the (additive) categorification
of cluster algebras \cite{DerksenWeymanZelevinsky10} (\cf also
\cite{Plamondon10b}) from Joyce's \cite{Joyce07, Joyce08} and
Joyce-Song's \cite{JoyceSong08} results in DT-theory
(\cf also \cite{Bridgeland10}).

Our aim in the present survey is to give an introduction
to this circle of ideas using quantum dilogarithm identities
as a leitmotif. We start with the classical pentagon
identity (section~\ref{ss:SFK-identity}).
Following Reineke \cite{Reineke03} \cite{Reineke10} and Kontsevich-Soibelman \cite{KontsevichSoibelman08} \cite{Kontsevich10}
we link it to the study of stability functions for a Dynkin quiver of
type $A_2$ and present Reineke's generalization to
arbitrary Dynkin quivers (Theorem~\ref{thm:Reineke}). We sketch
Reineke's beautiful and instructive proof in section~\ref{s:Proof-of-Reinekes-theorem}.
The result can be interpreted as stating that the refined DT-invariant
of a Dynkin (and even an acyclic) quiver is well-defined. In this form, it
is conjectured to generalize to an arbitrary quiver with a potential
with complex coefficients (section~\ref{ss:DT-invar-QP}). Evidence
for this is given in 
Kontsevich-Soibelman's deep work \cite{KontsevichSoibelman08, KontsevichSoibelman10}.
In section~\ref{s:DT-invariants-and-mutations}, we study the behaviour
of the refined DT-invariant under mutations following section~8.4 of
\cite{KontsevichSoibelman08}. We begin by recalling the categorical
setup: The category of finite-dimensional representations of the
Jacobi-algebra of the given quiver with potential is embedded as
the heart of the canonical t-structure in the $3$-Calabi-Yau category $\cd_{fd}\Gamma$,
the full subcategory formed by the homologically finite-dimensional dg modules
over the Ginzburg \cite{Ginzburg06} dg algebra $\Gamma$ associated with the given quiver with potential.
In $\cd_{fd}\Gamma$, the simple representations form a spherical 
collection (section~\ref{ss:setup}) in the sense of \cite{KontsevichSoibelman08}.
Mutation is modeled by `tilting' the heart (section~\ref{ss:comparison-of-categories}),
an idea going back to Bridgeland \cite{Bridgeland05},
cf. also \cite{Bridgeland06} \cite{BridgelandStern10}.

 The comparison formula
for the refined DT-invariants then becomes a simple consequence of
the freedom in the choice of a stability function (section~\ref{ss:comparison-of-invariants}).
The refined DT-invariant is defined to be rational (section~\ref{ss:rational-case})
if its adjoint action is given by a rational transformation. This is the case in
large classes of examples coming from representation theory, Lie theory and
higher Teichm\"uller theory. In this case, the adjoint action
is the `non commutative DT-invariant' \cite{Szendroi08}, whose behaviour under
mutations is governed by Fock-Goncharov's mutation rule for quantum
$Y$-variables (section~\ref{ss:intertwiners}). It is remarkable that
this rule is involutive (section~\ref{ss:mutation-involutive}).
In section~\ref{s:compositions-of-mutations}, we study compositions
of Fock-Goncharov mutations via functors
\[
\FG: \ccl^{op} \to \sf  \quad\mbox{and}\quad \FG: \clt^{op} \to \sf
\]
where $\sf$ is the groupoid of skew fields, $\ccl$ the groupoid
of cluster collections in the ambient $3$-Calabi-Yau category
$\cd_{fd}\Gamma$ and $\clt$ the groupoid of cluster-tilting
sequences in the cluster category $\cc_\Gamma$. The main results
of (quantum) cluster theory may be reformulated by saying
that the image of a morphism $\alpha: S \to S'$ of the
groupoid of cluster collections, where $S$ is the inital
collection, only depends on the target $S'$ (Theorem~\ref{thm:main-thm}).
We define an autoequivalence of $\cd_{fd}\Gamma$ respectively $\cc_\Gamma$
to be reachable if its effect on the inital cluster collection
(respectively cluster tilting sequence) is given by a composition
of mutations. We obtain a homomorphism $F\mapsto \zeta(F)$ from the group of
reachable autoequivalences of the ambient triangulated category to
the group of automorphisms of the functor $\FG$
(sections~\ref{ss:action-autoequivalences-der-cat} and
\ref{ss:action-autoequivalences-cluster-cat}). If the
loop functor $\Omega=\Sigma^{-1}$ of the ambient category
is reachable, then the non commutative DT-invariant
equals $\zeta(\Sigma^{-1})$ (under the assumptions which
ensure that it is well-defined). For example, in the context of the
quivers with potential associated with pairs of Dynkin
diagrams \cite{Keller10a}, the loop functor is reachable
and of finite order, which shows that the non commutative
DT-invariant is of finite order in this case. This fact is of
interest in string theory, \cf section~8 in \cite{CecottiNeitzkeVafa10},
which builds on
\cite{GaiottoMooreNeitzke10a,GaiottoMooreNeitzke09,GaiottoMooreNeitzke10}.
Following
an idea of Nagao \cite{Nagao10} we introduce the groupoid
of nearby cluster collections in section~\ref{ss:nearby-cluster-collections}
and show how it can be used to prove Theorem~\ref{thm:main-thm}
(section~\ref{ss:proof-main-thm}) and to reconstruct
the refined DT-invariant (Theorem~\ref{thm:reconstruct-refined}).
We conclude by presenting a purely combinatorial realization
of the groupoid of nearby cluster collections: the tropical
groupoid. In many cases, it allows for a purely combinatorial
construction of the refined DT-invariant (section~\ref{ss:tropical-groupoid}).

\section*{Acknowledgment} I thank Sarah Scherotzke for providing me
with her notes of the lectures and Tom Bridgeland, Changjian Fu, 
Christof Gei\ss, Pedro Nicol\'as, 
Pierre-Guy Plamondon and especially Mike Gorsky
for their remarks on a preliminary version of this article.
 I am grateful to Kentaro Nagao, So Okada and Cumrun Vafa for
stimulating (email) conversations and to Maxim Kontsevich for many inspiring
lectures on the subject.


\section{Quantum dilogarithm identities from Dynkin quivers, after Reineke}
\label{s:Quantum-dilog-identities-Dynkin}

\subsection{The pentagon identity} \label{ss:SFK-identity}
Let $q^{1/2}$ be an indeterminate. We denote its square by $q$. The
{\em quantum dilogarithm} is the (logarithm of the) series
\begin{equation}
\E(y) = 1 + \frac{q^{1/2}}{q-1}\cdot y + \cdots  + \frac{q^{n^2/2} y^n}{(q^n-1)(q^n-q)\cdots (q^n-q^{n-1})}+ \cdots
\end{equation}
considered as an element of the power series algebra $\Q(q^{1/2})[[y]]$. Notice that the denominator
of its general coefficient is the polynomial which computes the order of the general
linear group over a finite field with $q$ elements. Let us define the
{\em quantum exponential} by
\[
\exp_q(y)= \sum_{n=0}^\infty \frac{y^n}{[n]!} \ko
\]
where $[n]!$ is the polynomial which computes the number of complete flags
in an $n$-dimensional vector space over a field with $q$ elements. Then we have
\begin{equation} \label{eq:E-series}
\E(y)=\exp_q(\frac{q^{1/2}}{q-1}\cdot y) \ko
\end{equation}
which explains the choice of the notation $\E$ (for the mysterious
scaling factor, \cf Remark~\ref{ss:remark-scaling-factor}).
The quantum dilogarithm has many
remarkable properties (\cf \eg \cite{Zagier07} and the references given there),
among which we single out the following.

\begin{theorem}[Sch\"utzenberger \cite{Schuetzenberger53}, 
Faddeev-Volkov \cite{FaddeevVolkov93}, Faddeev-Kashaev \cite{FaddeevKashaev94}] For two indeterminates
$y_1$ and $y_2$ which $q$-commute in the sense that
\[
y_1 y_2=q y_2 y_1 \ko
\]
we have the equality
\begin{equation} \label{eq:SFK-identity}
\E(y_1) \E(y_2) = \E(y_2) \E(q^{-1/2} y_1 y_2) \E(y_1).
\end{equation}
\end{theorem}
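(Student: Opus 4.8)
The plan is to prove the pentagon identity~\eqref{eq:SFK-identity} by exploiting the factorization~\eqref{eq:E-series} through the quantum exponential $\exp_q$, which has a well-behaved additivity property that the $q$-commuting variables are precisely designed to exploit. First I would record the fundamental functional equation of the quantum exponential: whenever two variables $u$ and $v$ satisfy the $q$-commutation relation $uv=q\,vu$, one has the identity
\[
\exp_q(u+v)=\exp_q(v)\,\exp_q(u)\ko
\]
or more precisely a version adapted to the ordering of factors. This is the $q$-analogue of the classical fact that $\exp(u+v)=\exp(u)\exp(v)$ for commuting variables, and its proof reduces to matching coefficients using the $q$-binomial theorem, since $[n]!$ is the Gaussian factorial and the $q$-binomial coefficients $\binom{n}{k}_q$ govern how powers of $u+v$ expand when $uv=q\,vu$.

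The key step is then to set $u=\tfrac{q^{1/2}}{q-1}\,y_1$ and $v=\tfrac{q^{1/2}}{q-1}\,y_2$, compute the bracket $[v,u]$ coming from the $q$-commutation $y_1y_2=q\,y_2y_1$, and verify that the combination $u+v$ together with the commutator reorganizes into three terms whose arguments are exactly $y_1$, $y_2$ and $q^{-1/2}y_1y_2$ after rescaling. Concretely, I expect the mixed term produced by expanding $\exp_q$ of a sum of two non-commuting variables to contribute precisely the middle factor $\E(q^{-1/2}y_1y_2)$, with the scaling factor $\tfrac{q^{1/2}}{q-1}$ in~\eqref{eq:E-series} chosen exactly so that the cross-term has the correct normalization. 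The pentagon identity would thus emerge as the statement that two different orders of disentangling $\exp_q(u+v)$ agree.

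An alternative, and perhaps cleaner, route is to work directly in the quantum torus (the Weyl-algebra-type completion generated by $y_1,y_2$ subject to $y_1y_2=q\,y_2y_1$) and verify~\eqref{eq:SFK-identity} by checking equality of coefficients of each monomial $y_1^a y_2^b$ on both sides. Multiplying out $\E(y_2)\E(q^{-1/2}y_1y_2)\E(y_1)$ requires repeatedly normal-ordering products of the form $y_1^i (y_1y_2)^j y_2^k$ using the $q$-commutation relation, each such reordering introducing an explicit power of $q$; collecting these against the coefficients of $\E$ gives, for each total degree, a polynomial identity in $q$ that can in principle be reduced to a known Gaussian-binomial summation (a form of the $q$-Gauss or Rogers--Ramanujan-type identity).

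The hard part will be the bookkeeping of $q$-powers during normal ordering, and in particular establishing the functional equation of $\exp_q$ for non-commuting arguments in the first approach: one must be careful that $\exp_q(u+v)$ is interpreted correctly (the naive power series in a non-commutative sum) and that the $q$-binomial theorem is applied with the commutator term in exactly the right position. Once that lemma is in hand, the deduction of~\eqref{eq:SFK-identity} is a short rescaling computation. I would therefore isolate the $\exp_q$ functional equation as the single technical nucleus of the proof and treat the remainder as formal manipulation.
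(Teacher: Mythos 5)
Your route is genuinely different from the paper's. The paper never proves the pentagon identity by direct computation: it quotes it from the cited sources and then re-derives it as the $A_2$ instance of Reineke's Theorem~\ref{thm:Reineke}, by comparing the products $\E_{Q,Z}$ attached to the two generic stability functions on the representations of $1\to 2$ (one with stables $S_1,S_2$, the other with stables $S_2,P_2,S_1$). The equality of the two products is obtained over a finite field with $q$ elements from the uniqueness of the Harder--Narasimhan filtration, which gives the Hall-algebra identity~(\ref{eq:Hall-algebra-identity}) independently of the stability function, and is then transported to $\hat{\A}_Q$ by the integration map. That route buys uniformity (the same argument yields the identity for every Dynkin quiver, the pentagon being the smallest case) and a conceptual reason for the shape of the factors; your route buys self-containedness and avoids all representation theory. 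Both are legitimate.

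On the substance of your sketch, there is one concrete soft spot. The addition law $\exp_q(u)\exp_q(v)=\exp_q(u+v)$ for $vu=q\,uv$ does follow from the $q$-binomial theorem, and applying it twice to the right-hand side of~(\ref{eq:SFK-identity}) collapses it to $\E\big(y_1+y_2+q^{-1/2}y_1y_2\big)$ (one checks that $q^{-1/2}y_1y_2$ $q$-commutes correctly with $y_1$, and then the resulting sum with $y_2$). But this is only half the proof: you must still show that the left-hand side $\E(y_1)\E(y_2)$ equals $\E\big(y_1+y_2+q^{-1/2}y_1y_2\big)$, and this ``quantum multiplication law'' is \emph{not} a consequence of the addition law --- the cross term is not produced by expanding $\exp_q$ of a sum, and its normalization ($c\,q^{-1/2}$ rather than $c^2$, where $c=q^{1/2}/(q-1)$) shows it is a separate statement. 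The standard way to close the gap is the recursion $\E(qy)=\E(y)(1+q^{1/2}y)$ (the identity the paper records in the proof of Lemma~\ref{lemma:rational-DT-invariant}) combined with an induction on coefficients, or else your fallback of brute-force normal ordering and a $q$-binomial summation. So you should isolate \emph{two} lemmas, not one --- the addition law and the multiplication law; with both in hand the remaining rescaling is indeed formal.
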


As shown in \cite{FaddeevKashaev94}, this equality implies the classical `pentagon identity' for
Rogers' dilogarithm.

\subsection{Reineke's identities} \label{ss:Reineke-s-identities}
Our aim in this section is to associate, following Reineke \cite{Reineke10}, an identity
analogous to (\ref{eq:SFK-identity}) with each simply laced Dynkin diagram so that the
above identity corresponds to the diagram $A_2$.

So let $\Delta$ be a simply laced Dynkin diagram and let $Q$
be a quiver (=oriented graph) with underlying graph $\Delta$.
We will
associate a whole family of quantum dilogarithm products with $Q$. All these products
will be equal and among the resulting equalities, we will obtain the required generalization
of the pentagon identity (\ref{eq:SFK-identity}). The quantum dilogarithm products
will be constructed from `stability functions' on the category of representations
of $Q$.

We first need to introduce some notation: Let $k$ be a field. Let $\ca$ be
the category of representations of the opposite quiver $Q^{op}$ with values in the
category of finite-dimensional $k$-vector spaces
(we refer to \cite{Ringel84} \cite{GabrielRoiter92} \cite{AuslanderReitenSmaloe95}
\cite{AssemSimsonSkowronski06} for quiver representations).
Let $I=\{1, \ldots, n\}$ be the set of vertices of $Q$.
For each vertex $i$, let $S_i$ be the simple representation whose value at
$i$ is $k$ and whose value at all other vertices is zero. Let $K_0(\ca)$ be
the Grothendieck group of $\ca$. It is a finitely generated free abelian group and admits the
classes of the representations $S_i$, $i\in I$, as a basis.

A {\em stability function on $\ca$} is a group homomorphism
\[
Z: K_0(\ca) \to \C
\]
to the underlying abelian group of the field of complex numbers such
that for each non zero object $X$ of $\ca$, the number $Z(X)$ is non
zero and its argument, called the {\em phase of $X$}, lies in the
interval $[0,\pi[$, cf.~figure~\ref{fig:central-charge}.
A non zero object $X$ of $\ca$ is {\em semi-stable}
(respectively {\em stable}) if for each non zero proper subobject $Y$ of
$X$, the phase of $Y$ is less than or equal to the phase of $X$ (respectively
strictly less than the phas of $X$). Sometimes, the homomorphism $Z$
is called a {\em central charge}. Since it is a group homomorphism,
it is determined by the complex numbers $Z(S_i)$, $i\in I$. Notice
that each simple object of $\ca$ is stable (since it has no non
zero proper subobjects).

\begin{proposition}[King \cite{King94}] \label{prop:King} Let $Z:K_0(\ca)\to \C$ be a
stability function. For each real number $\mu$, let $\ca_\mu$ be
the full subcategory of $\ca$ whose objects are the zero object and the
semistable objects of phase $\mu$.
\begin{itemize}
\item[a)] The subcategory $\ca_\mu$ of $\ca$ is stable under
forming extensions, kernels and cokernels in $\ca$. In particular,
it is abelian and its inclusion into $\ca$ is exact. Its simple
objects are precisely the stable objects of phase~$\mu$.
\item[b)] Each object $X$ admits a unique filtration
\[
0=X_0 \subset X_1 \subset \cdots \subset X_s=X
\]
whose subquotients are semistable with strictly decreasing phases.
It is called the {\em Harder-Narasimhan filtration} (or {\em HN-filtration}) of $X$.
Its subquotients are the {\em HN-subquotients} of $X$.
\end{itemize}
\end{proposition}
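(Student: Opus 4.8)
My plan is to reduce both statements to a single elementary convexity fact about $Z$ and then run the standard Harder--Narasimhan machinery. Writing $\phi(X)\in[0,\pi)$ for the phase of a nonzero object $X$, I first record the \emph{see-saw property}: for a short exact sequence $0\to A\to B\to C\to 0$ in $\ca$ we have $Z(B)=Z(A)+Z(C)$, and since both summands have argument in $[0,\pi)$, the argument of the sum lies weakly between the two, so $\phi(B)$ lies between $\phi(A)$ and $\phi(C)$; in particular $\phi(B)$ equals the common value when $\phi(A)=\phi(C)$, and if $\phi(A)=\phi(B)$ then $\phi(C)$ equals them too (subtracting parallel vectors). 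Dually, $X$ is semistable of phase $\mu$ iff every nonzero quotient $Q$ of $X$ has $\phi(Q)\ge\mu$. The first consequence, used throughout, is the \emph{Hom-vanishing lemma}: if $A,B$ are semistable with $\phi(A)>\phi(B)$ then $\Hom(A,B)=0$, since a nonzero $f$ would have image $I$ that is a quotient of $A$ (so $\phi(I)\ge\phi(A)$) and a subobject of $B$ (so $\phi(I)\le\phi(B)$), a contradiction.

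For part a), let $f\colon X\to Y$ be a morphism between objects of $\ca_\mu$ and set $I=\im f$. As a quotient of $X$ and a subobject of $Y$ it satisfies $\mu\le\phi(I)\le\mu$, so $\phi(I)=\mu$, and any subobject of $I$ is a subobject of $Y$, hence of phase $\le\mu$; thus $I\in\ca_\mu$. Feeding $0\to\ker f\to X\to I\to 0$ and $0\to I\to Y\to\cok f\to 0$ into the see-saw gives $\phi(\ker f)=\phi(\cok f)=\mu$, and a short check (subobjects of $\ker f$ are subobjects of $X$; subobjects of $\cok f$ lift to subobjects $\tilde Y\subseteq Y$ containing $I$) shows both lie in $\ca_\mu$. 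Closure under extensions is the same computation: in $0\to A\to B\to C\to 0$ with $A,C\in\ca_\mu$ one finds $\phi(B)=\mu$, and any $W\subseteq B$ sits in $0\to W\cap A\to W\to W/(W\cap A)\to 0$ with ends that are subobjects of $A$ and of $C$, so $\phi(W)\le\mu$. Since these kernels and cokernels are computed as in $\ca$, the subcategory $\ca_\mu$ is abelian and its inclusion is exact. Finally $X\in\ca_\mu$ is simple in $\ca_\mu$ iff it has no proper nonzero subobject of phase exactly $\mu$ — such a subobject is automatically in $\ca_\mu$ — which is precisely stability of phase $\mu$.

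For existence in part b) I would construct the \emph{maximal destabilizing subobject}. Here the finite-dimensionality of $\ca$ is essential: the dimension vectors of subobjects of a fixed $X$ are bounded componentwise by $\dimv X$, so only finitely many values $Z(Y)$, and hence finitely many phases, occur among subobjects $Y\subseteq X$; thus $\phi_{\max}(X)=\max\{\phi(Y):0\neq Y\subseteq X\}$ is attained. By part a) the nonzero subobjects of phase $\phi_{\max}(X)$ lie in $\ca_{\phi_{\max}}$ and are closed under sums (a sum $Y_1+Y_2$ is a quotient of $Y_1\oplus Y_2\in\ca_{\phi_{\max}}$), so there is a unique largest one, $X_1$, semistable of phase $\phi_{\max}(X)$. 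A see-saw argument gives $\phi_{\max}(X/X_1)<\phi_{\max}(X)$: a subobject of $X/X_1$ of phase $\ge\phi_{\max}(X)$ would pull back to a subobject of $X$ strictly containing $X_1$ and of phase $\phi_{\max}(X)$, against maximality. Inducting on the length of $X$, applying the construction to $X/X_1$ and pulling the filtration back, produces the HN-filtration.

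Uniqueness follows by promoting the Hom-vanishing lemma to: a semistable $A$ of phase $\mu$ admits no nonzero map into an object all of whose HN-subquotients have phase $<\mu$ (induct along the HN-filtration of the target). Given any filtration as in b) with first subquotient $X'_1$ of phase $\mu'_1$, the composite $X_1\hookrightarrow X\to X/X'_1$ must vanish, so $X_1\subseteq X'_1$; comparing phases via semistability of $X'_1$ and maximality of $X_1$ forces $X_1=X'_1$, and induction on length finishes the proof. I expect the main obstacle to be the existence step, namely the passage from the see-saw inequalities to a genuine maximal destabilizing subobject: one must combine the finiteness special to representations of a quiver (to attain $\phi_{\max}$) with the closure properties of part a) (to make that subobject well-defined and semistable).
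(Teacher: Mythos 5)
Your argument is correct, and since the paper states this proposition only with a citation to King \cite{King94} and gives no proof of its own, there is nothing to compare it against beyond the standard argument -- which is exactly what you give: the see-saw property, the Hom-vanishing lemma between semistables of decreasing phase, the maximal destabilizing subobject for existence, and Hom-vanishing again for uniqueness. You also correctly isolate the one place where the specific category enters, namely that the componentwise bound $\dimv Y\leq\dimv X$ on subobjects guarantees that $\phi_{\max}(X)$ is attained and that a largest subobject of that phase exists.
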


Notice that by part a), all stable objects are indecomposable
and their endomorphism algebras are (skew) fields.

Under the assumptions of the proposition, let $\ca_{\geq \mu}$ be
the full subcategory of $\ca$ formed by the objects $X$ all of
whose HN-subquotients have phase greater than or equal to $\mu$.
Define the full subcategory $\ca_{<\mu}$ analogously. Then
the pair $(\ct, \cF)=(\ca_{\geq \mu}, \ca_{<\mu})$ is a {\em torsion pair}
in $\ca$, i.e. for $X$ in $\ct$ and $Y$ in $\cF$,
we have $\Hom(X,Y)=0$ and for each object $Z$, there is an
exact sequence
\[
\xymatrix{ 0 \ar[r] & X \ar[r] & Z \ar[r] & Y \ar[r] & 0 } \ko
\]
where $X$ belongs to $\ct$ and $Y$ to $\cF$.
Thus, each stability function $Z$ determines a decreasing chain
(indexed by the real numbers) of torsion subcategories $\ca_{\geq \mu}$.
Below, instead of working with stability functions, we could work
more generally with decreasing chains of torsion subcategories (but
stability functions are more pleasant to use).

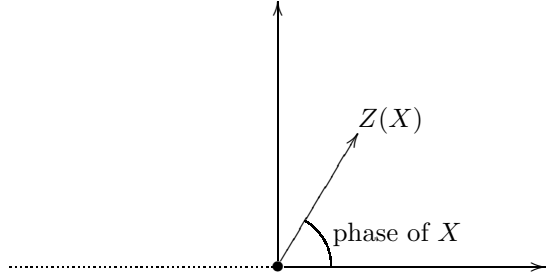
\begin{figure}
\[
\xyoption{arc}
\begin{xy}
 0;<1pt,0pt>:<0pt,1pt>::
(25,12)*[r]{\mbox{phase of $X$}},
(0,0) *{\bullet} = "0",
(100,0) *{} = "A",
(-100,0) *{} ="B",
(0,100) *{} ="C",
(30,50) *{} = "D",
 "0", {\ar "A"},
"0", "B" **@{.},
"0", {\ar "D"},
"D" *!DL{Z(X)} ,
"0", {\ar "C"},
(20,0); (10.28,17.14)  \ellipse^{-}
\end{xy}
\]
\caption{Image of a non zero object under the central charge} \label{fig:central-charge}
\end{figure}

As a first example, let us consider the case where $Q$ is the
quiver
\[
\xymatrix{ 1 \ar[r] & 2}
\]
of type $A_2$. Its Auslander-Reiten quiver is
\[
\xymatrix{  & P_2 \ar[dr] & \\
P_1=S_1 \ar[ru] \ar@{.}[rr] & & S_2\, ,} 
\]
where $S_1$ and $S_2$ are the simple representations associated
with the vertices and $P_2$ the projective cover of $S_2$ given
by the identity map $k \la k$ (we consider representations of $Q^{op}$).
In particular, the representation $P_2$ is the only non simple indecomposable
object and it has $S_1$ as its unique non zero proper subobject.
If we only consider {\em generic} stability functions
(i.e. the central charge $Z$ maps two non zero classes into the same straight
line of $\C$ only if they are $\Q$-proportional), there are essentially
two possibilities: Either the phase of $S_1$ is greater than that of $S_2$ or
the phase of $S_1$ is smaller than that of $S_2$, cf.~figure~\ref{fig:stab-A2}.
\begin{figure}
\[
\begin{array}{ccc}
\xyoption{arc}
\begin{xy}
 0;<0.9pt,0pt>:<0pt,0.9pt>::
(0,0) *{\bullet} = "0",
(100,0) *{} = "A",
(-50,0) *{} ="B",
(0,100) *{} ="C",
(-20,30) *+!DR{Z(S_1)} = "S1", *{\bullet},
(50, 40) *+!DL{Z(S_2)} = "S2", *{\bullet},
(30, 70) *+!D{Z(P_2)}= "P2", *{\circ},
 "0", {\ar "A"},
"0", "B" **@{.},
"0", {\ar "C"},
"0", {\ar "S1"},
"0", {\ar@{.>} "P2"},
"0", {\ar "S2"}
\end{xy}
& &
\xyoption{arc}
\begin{xy}
 0;<0.9pt,0pt>:<0pt,0.9pt>::
(0,0) *{\bullet} = "0",
(100,0) *{} = "A",
(-50,0) *{} ="B",
(0,100) *{} ="C",
(-40,40) *+!DR{Z(S_2)} = "S2", *{\bullet},
(70, 40) *+!DL{Z(S_1)} = "S1", *{\bullet},
(30, 80) *+!D{Z(P_2)}= "P2", *{\bullet},
 "0", {\ar "A"},
"0", "B" **@{.},
"0", {\ar "C"},
"0", {\ar "S1"},
"0", {\ar "P2"},
"0", {\ar "S2"}
\end{xy}
\end{array}
\]
\caption{Two different generic stability functions for $A_2$} \label{fig:stab-A2}
\end{figure}
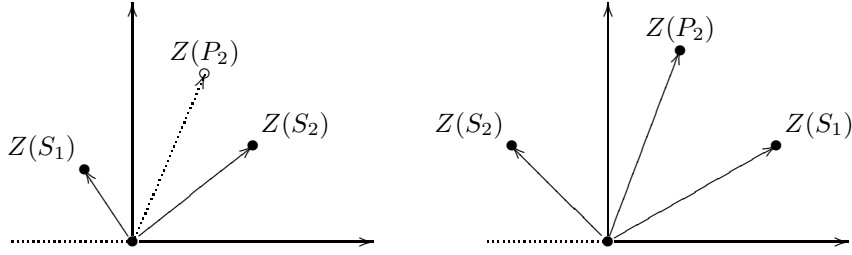
Now the
object $P_2$, whose class in $K_0(\ca)$ is the sum of those of $S_1$ and $S_2$,
has the proper subobject $S_1$. In the first case, $S_1$  is of greater phase than
$P_2$ and so $P_2$ is unstable. In the second case, $P_2$ is stable, since its only
non zero proper subobject $S_1$ has smaller phase. Thus, in the
first case, we find {\em two} stable objects and in the second
case {\em three}. Notice that these numbers agree with the numbers
of factors in the two products appearing in the pentagon identity~(\ref{eq:SFK-identity}).
In both cases, each semistable object of given phase $\mu$ is a direct
sum of copies of the unique stable object of phase $\mu$. Thus,
these stability functions are discrete in the sense of the
following definition.

\begin{definition} A stability function $K_0(\ca)\to \C$ is {\em discrete} if,
for each real number $\mu$,  the subcategory of semistable objects $\ca_\mu$
is zero or semisimple with a unique simple object.
\end{definition}

We will associate a product of quantum dilogarithms with each discrete
stability function. We first need to define the algebra in
which these products will be computed:
Let $n\geq 1$ be an integer and $Q$ a finite quiver whose vertex
set is the set of integers from $1$ to $n$. For a
$kQ$-module $M$, let the dimension vector $\dimv M \in\Z^n$
have the components $\dim(Me_i)$. The map $\dimv$ induces
an isomorphism from $K_0(\ca)$ onto $\Z^n$. Define the
{\em Euler form} $\langle, \rangle: \Z^n\times \Z^n \to \Z$ by
\[
\langle \dimv L , \dimv M\rangle = \dim \Hom(L,M) - \dim \Ext^1(L,M).
\]
Define $\lambda: \Z^n \times \Z^n \to \Z$ to be opposite to
the antisymmetrization of the Euler form so that we have
\[
\lambda(\alpha,\beta) = \langle\beta,\alpha\rangle-\langle\alpha,\beta\rangle.
\]
for all $\alpha$, $\beta$ in $\Z^n$. Notice that $\lambda(e_i,e_j)$
is the difference of the number of arrows from $i$ to $j$ minus the
number of arrows from $j$ to $i$.
The {\em quantum affine space $\A_Q$} is the $\Q(q^{1/2})$-algebra
generated by the variables $y^\alpha$, $\alpha\in\N^n$, subject to
the relations
\[
y^\alpha y^\beta = q^{1/2 \,\lambda(\alpha,\beta)} y^{\alpha+\beta}
\]
for all $\alpha$ and $\beta$ in $\N^n$. It is also generated by
the variables $y_i = y^{e_i}$, $1\leq i\leq n$, subject to the
relations
\[
y_i y_j = q^{\lambda(e_i, e_j)} y_j y_i
\]
and admits the monomials $y^\alpha$, $\alpha\in\N^n$, as a basis
over $\Q(q^{1/2})$. The {\em formal quantum affine space $\hat{\A}_Q$}
is the completion of $\A_Q$ with respect to the ideal generated by the
$y_i$, $i\in I$.

\begin{theorem}[Reineke \cite{Reineke10}] \label{thm:Reineke} Let $k$ be a field,
$Q$ a Dynkin quiver and
\[
Z: K_0(\mod kQ) \to \C
\]
a discrete stability function. Then the product computed in $\hat{\A}_Q$
\begin{equation} \label{eq:Reineke}
\E_{Q,Z} = \prod^\curvearrowright_{M\mbox{\scriptsize stable}} \E(y^{\dimv M}) \ko
\end{equation}
where the factors are in the order of decreasing phase,
is independent of the choice of $Z$.
\end{theorem}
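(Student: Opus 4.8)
The plan is to interpret the product (\ref{eq:Reineke}) through the Hall algebra of $\ca=\mod kQ$ and the Harder--Narasimhan recursion provided by Proposition~\ref{prop:King}. First I would work over a finite field $k=\fq$ and let $\ch(\ca)$ be the completed, twisted Hall algebra: it has a topological basis given by the isomorphism classes $[M]$ of objects of $\ca$, and its multiplication, twisted by a power of $q$ involving the Euler form, counts filtrations, so that $[M]\cdot[N]=\sum_{[R]} q^{\frac12\langle\dimv M,\dimv N\rangle}\,g^R_{MN}\,[R]$, where $g^R_{MN}$ is the number of subobjects of $R$ isomorphic to $N$ with quotient isomorphic to $M$. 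Because $\ca$ is hereditary, there is an \emph{integration map}
\[
\Psi:\ch(\ca)\to\hat{\A}_Q \ko \quad [M]\mapsto \frac{q^{\frac12\langle\dimv M,\dimv M\rangle}}{|\Aut M|}\,y^{\dimv M}\ko
\]
which, as shown by Reineke, is an algebra homomorphism. Establishing that $\Psi$ is multiplicative (this uses hereditariness and the Euler-form twist in an essential way) and controlling the completion so that the infinite sums below converge is the part needing the most care.

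The next step is the computation for a single phase. For a discrete $Z$ and a real number $\mu$, the category $\ca_\mu$ is by hypothesis either zero or $\add S_\mu$ for a unique stable object $S_\mu$; since $Q$ is Dynkin, $S_\mu$ is exceptional, so $\Ext^1(S_\mu,S_\mu)=0$ and $\End(S_\mu)=\fq$. Hence the objects of $\ca_\mu$ are the $S_\mu^{\oplus m}$, $m\geq 0$, with $\Aut(S_\mu^{\oplus m})=\GL_m(\fq)$ of order $(q^m-1)(q^m-q)\cdots(q^m-q^{m-1})$, and $\langle\dimv S_\mu^{\oplus m},\dimv S_\mu^{\oplus m}\rangle=m^2$. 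Feeding this into $\Psi$ and using $(y^\alpha)^m=y^{m\alpha}$ (valid because $\lambda(\alpha,\alpha)=0$) gives exactly
\[
\Psi\Bigl(\sum_{M\in\ca_\mu}[M]\Bigr)=\sum_{m\geq0}\frac{q^{m^2/2}}{(q^m-1)\cdots(q^m-q^{m-1})}\,y^{m\,\dimv S_\mu}=\E(y^{\dimv S_\mu})\ko
\]
so each quantum dilogarithm factor in (\ref{eq:Reineke}) is the image under $\Psi$ of the total sum over a single HN-stratum; this also explains the scaling factor $q^{n^2/2}$ as coming from the Euler form on a rigid object.

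Third, I would express the global sum over \emph{all} objects as an ordered product over strata. Expanding the product $\prod^\curvearrowright_\mu\bigl(\sum_{M\in\ca_\mu}[M]\bigr)$ taken over decreasing phases, the coefficient of $[N]$ counts filtrations of $N$ whose successive subquotients are semistable of strictly decreasing phase; by the existence and uniqueness of the HN-filtration (Proposition~\ref{prop:King}~b) this coefficient equals $1$ for every $N$. Thus, in $\ch(\ca)$,
\[
\prod^\curvearrowright_\mu\Bigl(\sum_{M\in\ca_\mu}[M]\Bigr)=\sum_{\mbox{\scriptsize all }[N]}[N]\ko
\]
the left-to-right order being dictated by the convention for the Hall product and chosen to match the decreasing-phase ordering in (\ref{eq:Reineke}). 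Applying the homomorphism $\Psi$ and using the single-phase computation, the left-hand side becomes precisely $\E_{Q,Z}=\prod^\curvearrowright_{M\ \mbox{\scriptsize stable}}\E(y^{\dimv M})$, while the right-hand side is the image of the sum over the whole category $\ca$, an element of $\hat{\A}_Q$ that does not involve $Z$ at all. Hence $\E_{Q,Z}$ is independent of the discrete stability function.

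Finally, this proves the identity in $\hat{\A}_Q$ whenever $q$ is a prime power. To reach the statement over $\Q(q^{1/2})$ with $q^{1/2}$ an indeterminate, I would note that for a Dynkin quiver the indecomposables, their dimension vectors (the positive roots) and the relevant Hall numbers are given by universal polynomials in $q$, and that the set of stable dimension vectors together with their phase order is determined by $Z$ alone, independently of $k$. Each coefficient of each side is therefore a rational function of $q$ taking equal values at infinitely many prime powers, so the two sides coincide as elements of $\Q(q^{1/2})[[y_1,\dots,y_n]]$. The main obstacle, as noted, is structural rather than combinatorial: verifying that $\Psi$ is a genuine algebra homomorphism on the completed twisted Hall algebra and that the infinite one-phase sums and their product are legitimate in the completion.
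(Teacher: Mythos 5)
This is essentially the paper's own proof: reduce to finite base fields (using that the Dynkin representation theory and the coefficients are universal rational functions of $q$), encode the Harder--Narasimhan filtrations as a factorization of $\sum[M]$ over decreasing phases in the completed Hall algebra, push this through the integration map weighted by $|\Aut M|$ and the Euler form, and identify each single-phase factor with $\E(y^{\dimv S_\mu})$ via $|\GL_m(\fq)|=(q^m-1)\cdots(q^m-q^{m-1})$. The one bookkeeping point to repair is that you have inserted the Euler-form twist both into the Hall product and into the integration map, whereas it must occur in exactly one of the two places (the paper uses the \emph{untwisted} product, so that the HN identity holds with coefficient $1$ on the nose, and puts the factor $q^{\frac12\langle\dimv M,\dimv M\rangle}$ only in $\int$); with both twists present, the coefficient-$1$ identity and the multiplicativity of $\Psi$ cannot simultaneously hold.
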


Recall that if $Q$ is a quiver without oriented cycles, a {\em source}
of $Q$ is a vertex without incoming arrows; a {\em source sequence}
for $Q$ is an enumeration $i_1$, \ldots, $i_n$ of the
vertices of $Q$ such that each $i_j$ is a source in the quiver obtained
from $Q$ by removing the vertices $i_1$, \ldots, $i_{j-1}$ and
all arrows incident with one of these vertices. If $Q$ is a Dynkin
quiver, by Gabriel's theorem, each positive root $\alpha$ of
the corresponding root system is the dimension vector of
an indecomposable representation $V(\alpha)$, unique up
to isomorphism. We endow the set of positive roots with the
smallest order relation such that $\Hom(V(\alpha), V(\beta))\neq 0$
implies $\alpha \leq \beta$.
\begin{corollary}
If $Q$ is a Dynkin quiver, we have
\[
\E(y_{i_1}) \ldots \E(y_{i_n}) = \E(y^{\alpha_1}) \ldots \E(y^{\alpha_N}) \ko
\]
where $i_1$, \ldots, $i_n$ is a source sequence for $Q$ and
$\alpha_1$, \ldots, $\alpha_N$ are the dimension vectors
of the indecomposable representations enumerated in decreasing
order with respect to the above defined order relation\end{corollary}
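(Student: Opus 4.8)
The plan is to deduce the corollary from Theorem~\ref{thm:Reineke} by evaluating the universal product $\E_{Q,Z}$ at two carefully chosen discrete stability functions: one whose only stable objects are the simples $S_{i_1}, \ldots, S_{i_n}$, ordered by the source sequence, which produces the left-hand side, and one for which every indecomposable $V(\alpha)$ is stable, which produces the right-hand side. Since both products equal $\E_{Q,Z}$ and the theorem asserts that this element does not depend on $Z$, the two sides must coincide.

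First I would treat the left-hand side. Choose a central charge $Z_{\mathrm{src}}$ with $\mathrm{Im}\, Z_{\mathrm{src}}(S_i)>0$ for all $i$ and with phases strictly decreasing along the source sequence, i.e. $\mathrm{phase}(S_{i_1})>\cdots>\mathrm{phase}(S_{i_n})$; any nonzero object then has phase a weighted mean of these, so $Z_{\mathrm{src}}$ is a genuine stability function. I claim its only stable objects are the simples. Given a nonsimple indecomposable $M$, let $j=i_a$ be the earliest vertex of the source sequence lying in $\supp M$; then $\supp M\subseteq\{i_a,\ldots,i_n\}$ and, by the definition of a source sequence, $j$ is a source of the full subquiver on $\supp M$. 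Passing to $Q^{op}$, this means $j$ has no outgoing arrows within $\supp M$, so any line in $M_j$ spans a subrepresentation isomorphic to $S_j$; since $M$ is indecomposable and nonsimple it has at least two distinct simple composition factors (there are no loops, so $\Ext^1(S_j,S_j)=0$), whence $\mathrm{phase}(S_j)$, the largest phase occurring, strictly exceeds $\mathrm{phase}(M)$. Thus $S_j\hookrightarrow M$ destabilizes $M$, and only the $S_i$ are stable. Each indecomposable is a brick without self-extensions, so each nonzero $\ca_\mu$ equals $\add S_i$ for a single $i$; hence $Z_{\mathrm{src}}$ is discrete and $\E_{Q,Z_{\mathrm{src}}}=\E(y_{i_1})\cdots\E(y_{i_n})$, the factors appearing in order of decreasing phase.

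For the right-hand side I would produce a generic central charge $Z_{\mathrm{gen}}$ (distinct phases on all positive roots) for which every indecomposable $V(\alpha)$ is stable; as before, rigidity of the Dynkin indecomposables makes each nonzero $\ca_\mu$ equal to $\add V(\alpha)$, so $Z_{\mathrm{gen}}$ is discrete and $\E_{Q,Z_{\mathrm{gen}}}=\E(y^{\alpha_1})\cdots\E(y^{\alpha_N})$, the factors ordered by decreasing phase. It remains to match this order with the stated one: if $\Hom(V(\alpha),V(\beta))\neq 0$ with $\alpha\neq\beta$, a nonzero map factors as a quotient of $V(\alpha)$ followed by a subobject of $V(\beta)$, so stability gives $\mathrm{phase}(\alpha)\leq\mathrm{phase}(\mathrm{image})\leq\mathrm{phase}(\beta)$, and distinctness of phases forces $\mathrm{phase}(\alpha)<\mathrm{phase}(\beta)$. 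Hence the decreasing-phase order of $Z_{\mathrm{gen}}$ refines the Hom-order, and $\E(y^{\alpha_1})\cdots\E(y^{\alpha_N})$ is exactly the product in the corollary.

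The hard part is the existence of $Z_{\mathrm{gen}}$: one must exhibit a chamber of central charges in which \emph{all} (finitely many) indecomposables are simultaneously stable. This is where the Dynkin hypothesis is genuinely used---through representation-finiteness and the fact that all indecomposables are bricks---and is the content established by Reineke \cite{Reineke10} (one convenient route is to choose phases on the simples that refine, as far as additivity permits, a linear extension of the Hom-order, and to check that the finitely many strict inequalities cutting out this chamber are compatible). Granting this, the corollary follows at once by applying Theorem~\ref{thm:Reineke} to compare $\E_{Q,Z_{\mathrm{src}}}$ with $\E_{Q,Z_{\mathrm{gen}}}$.
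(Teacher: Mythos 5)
Your proposal follows the paper's own route exactly: it deduces the corollary from Theorem~\ref{thm:Reineke} by comparing $\E_{Q,Z}$ for two discrete stability functions, one ($Z_{\mathrm{src}}$, the paper's $Z_1$) whose only stables are the simples with phases decreasing along the source sequence, and one ($Z_{\mathrm{gen}}$, the paper's $Z_2$) for which all indecomposables are stable with phases refining the Hom-order. The one caveat is that you correctly isolate the existence of $Z_{\mathrm{gen}}$ as the hard step but attribute it to Reineke, whereas the paper records that this was only a conjecture of Reineke, proved by Hille--Juteau (unpublished), and that Reineke's published argument sidesteps it by constructing the relevant HN-filtrations directly without assuming $Z_2$ exists; your verification of $Z_{\mathrm{src}}$ and of the compatibility of the phase order with the Hom-order is otherwise correct and matches the paper's sketch.
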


Clearly, for the quiver $Q:1\to 2$, the corollary specializes to
Sch\"utzenberger-Faddeev-Kashaev's theorem. In this case, we deduce it from the
theorem by comparing $\E_{Q,Z}$ for the two generic stability functions
considered above. To deduce the corollary from the theorem in the
general case, we need to construct stability functions $Z_1$ and $Z_2$ such that
\begin{itemize}
\item for $Z_1$, the only stable objects are the simples and $S_{i_1}$, \ldots,
$S_{i_n}$ have strictly decreasing phases;
\item for $Z_2$, the stable objects are precisely the indecomposable
representations and their phases increase from left to right
in the Auslander-Reiten quiver.
\end{itemize}
The existence of $Z_1$ is not hard to check. The existence of $Z_2$
was a conjecture by Reineke \cite{Reineke10} proved by Hille-Juteau 
(unpublished). It would be interesting to try and apply the methods of 
section~2.6 of \cite{IngallsThomas09} to this problem.
In \cite{Reineke10}, the corollary is proved using only the
HN-filtrations associated with $Z_2$, which are easy to construct directly without
assuming the existence of $Z_2$ itself.

\subsection{The Kronecker quiver} \label{ss:Kronecker} Let us emphasize
that the main thrust of Reineke's work in \cite{Reineke10} and
Kontsevich--Soibelman's work in \cite{KontsevichSoibelman08} and
\cite{KontsevichSoibelman10} bears on the general case of
not necessarily discrete stability functions. As an example, consider
the Kronecker quiver
\[
\xymatrix{1 \ar@<1ex>[r] \ar@<-1ex>[r] & 2}.
\]
For a stability function such that the phase of $S_1$
is strictly greater than the one of $S_2$,
the simples are the only stable objects, which leads to the product
\begin{equation} \label{eq:kr-left}
\E(y_1) \E(y_2)
\end{equation}
For a stability function such that the phase of $S_1$ is strictly
smaller than the one of $S_2$, the stable representations are precisely
the postprojective indecomposables, the preinjective indecomposables
and the representations in the $\mathbb{P}^1$-family
\[
\xymatrix{\C  & \C \ar@<1ex>[l]^{x_0} \ar@<-1ex>[l]_{x_1}} \ko
(x_0:x_1)\in \mathbb{P}^1(\C).
\]
In this case, Reineke and
Kontsevich--Soibelman obtain the product (\cf equation~(2.24) of
\cite{GaiottoMooreNeitzke10a})
\begin{equation} \label{eq:kr-right}
\big(\E(0,1) \E(1,2) \E(2,3) \ldots \big)\E(1,1)^4 \E(2,2)^{-2} \big( \ldots
\E(3,2)\E(2,1)\E(1,0) \big) \ko
\end{equation}
where we write $\E(a,b)$ instead of $\E(y^{a e_1 + b e_2})$. An elementary
proof of the identity between the expressions (\ref{eq:kr-left}) and
(\ref{eq:kr-right}) can be found in Appendix~A of
\cite{GaiottoMooreNeitzke10a}.

\section{Sketch of proof of Reineke's theorem}
\label{s:Proof-of-Reinekes-theorem}

Let us fix a discrete stability condition $Z: K_0(\mod kQ) \to \C$.
Since $Q$ is a Dynkin quiver, its representation theory is `independent of $k$'.
In particular, the stability function $Z$ is discrete
for any choice of $k$ and the dimension vectors of the stable objects
do not depend on $k$. So the product~(\ref{eq:Reineke})
is independent of $k$.

Now notice that the coefficients of the series $\E_{Q,Z}$ in
(\ref{eq:Reineke}) lie in the subring $R$ of $\Q(q^{1/2})$
obtained from the polynomial ring $\Q[q^{1/2}]$ by
inverting $q^{1/2}$ and all the polynomials $q^l-1$,
$l\geq 1$. Thus, in order to prove that $\E_{Q,Z}$
is independent of $Z$, it suffices to show that
its image under specializing $q$ to any prime
power $p^m$ is independent of $Z$.

So let us assume now that $q$ is a prime power $p^m$ and that
$k$ is a finite field with $q$ elements. Let $\ca$ be
the category of finite-dimensional $kQ$-modules.
We consider the completed (non twisted, opposite) Ringel-Hall algebra
$\hat{\ch}(\ca)$: its elements are formal series with rational
coefficients
\[
\sum_{[\ca]} a_M [M] \ko
\]
where the sum is taken over the set $[\ca]$ of
isomorphism classes $[M]$ of $k$-finite-dimensional
right $kQ$-modules $M$. The product of $\hat{\ch}(\ca)$ is the
continuous bilinear map determined by
\[
[L] [M] = \sum c_{LM}^N [N] \ko
\]
where $c_{LM}^N$ is the number of submodules
$L'$ of $N$ which are isomorphic to $L$ and such
that $N/L'$ is isomorphic to $M$.

By King's Proposition~\ref{prop:King}, each $kQ$-module
$M$ admits a unique HN-filtration with semistable subquotients
of strictly decreasing phase. In the Ringel-Hall algebra,
this translates into the identity
\begin{equation} \label{eq:Hall-algebra-identity}
\sum_{[\ca]} [M] = \prod^\curvearrowright_{\mu\mbox{\scriptsize\  decreasing}} \sum_{[\ca_\mu]} [L] \ko
\end{equation}
where the sum on the left is taken over all isomorphism classes,
the product on the right over the possible (rational) phases
in decreasing order and each factor is the sum over the
isomorphism classes of semi-stable modules $L$ with phase $\mu$.
This identity shows that the product on the right hand
side is in fact independent of the choice of $Z$.
It remains to `transport' this identity from the Hall
algebra to our algebra of non commutative power series.
For this, we define $\hat{\A}_{Q, \mbox{\scriptsize spec}}$ to be
the algebra obtained by specializing
$q=p^m$ in the subring of $\hat{\A}_Q$ formed by the series with
coefficients in the ring $R$ defined above and
we define the {\em integration map}
\[
\int: \hat{\ch}(\ca) \to \hat{\A}_{Q, \mbox{\scriptsize spec}}
\]
to be the continuous $\Q$-linear map which takes the class $[M]$ of a module to
\[
q^{1/2 \langle\dimv M, \dimv M \rangle}\frac{y^{\dimv M}}{|\Aut(M)|}\; \ko
\]
where $|\Aut(M)|$ is the order of the automorphism group of $M$, the
notation $\dimv M$ denotes the class of $M$ in $K_0(\ca)$ and the form
$\langle , \rangle$ is the {\em Euler form} defined by
\[
\langle \dimv L, \dimv M \rangle = \dim \Hom(L,M) - \dim\Ext^1(L,M).
\]

\begin{lemma} The integration map is an algebra homomorphism.
\end{lemma}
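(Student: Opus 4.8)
The plan is to verify $\int([L][M])=\int([L])\int([M])$ on the topological basis given by the isomorphism classes $[M]$ and then to extend the equality to all of $\hat{\ch}(\ca)$ using the continuity and $\Q$-linearity that are built into $\int$. So I would fix two modules $L$ and $M$ and expand $[L][M]=\sum_N c_{LM}^N[N]$. The first observation is that $c_{LM}^N\neq 0$ forces a short exact sequence with outer terms $L$ and $M$ and middle term $N$, so that $\dimv N=\dimv L+\dimv M$ for every contributing $N$. Hence every monomial occurring on either side of the desired identity is $y^{\dimv L+\dimv M}$, and the whole problem collapses to comparing the accompanying scalar coefficients in $R$, that is, the powers of $q$ together with the automorphism factors $1/|\Aut M|$.

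The main tool will be the Riedtmann--Peng formula, which expresses the Hall number through an extension group,
\[
c_{LM}^N=\frac{|\Ext^1(L,M)_N|}{|\Hom(L,M)|}\cdot\frac{|\Aut N|}{|\Aut L|\,|\Aut M|}\ko
\]
where $\Ext^1(L,M)_N\subseteq\Ext^1(L,M)$ denotes the classes whose middle term is isomorphic to $N$ (the order of $L$ and $M$ being the one prescribed by the opposite product). The factor $|\Aut N|$ is precisely the denominator that $\int([N])$ contributes, so it cancels. The decisive step is then that the subsets $\Ext^1(L,M)_N$ partition $\Ext^1(L,M)$ as $N$ ranges over the possible middle terms, whence
\[
\sum_N\frac{c_{LM}^N}{|\Aut N|}=\frac{|\Ext^1(L,M)|}{|\Hom(L,M)|\,|\Aut L|\,|\Aut M|}=\frac{q^{-\langle\dimv L,\dimv M\rangle}}{|\Aut L|\,|\Aut M|}\ko
\]
the last equality holding because $\Hom(L,M)$ and $\Ext^1(L,M)$ are $\fq$-vector spaces, so the ratio of their cardinalities is $q^{\dim\Ext^1(L,M)-\dim\Hom(L,M)}=q^{-\langle\dimv L,\dimv M\rangle}$.

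It then remains to reconcile the powers of $q$. On the product side, $\int([L])\int([M])$ carries the exponent $\tfrac12\langle\dimv L,\dimv L\rangle+\tfrac12\langle\dimv M,\dimv M\rangle+\tfrac12\lambda(\dimv L,\dimv M)$, the last summand arising from the twisted multiplication $y^{\dimv L}y^{\dimv M}=q^{\frac12\lambda(\dimv L,\dimv M)}y^{\dimv L+\dimv M}$ in $\hat{\A}_Q$. On the Hall side each $\int([N])$ supplies $q^{\frac12\langle\dimv N,\dimv N\rangle}$, and expanding $\langle\dimv L+\dimv M,\dimv L+\dimv M\rangle$ bilinearly reproduces the two diagonal Euler terms together with the mixed sum $\langle\dimv L,\dimv M\rangle+\langle\dimv M,\dimv L\rangle$. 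Combining one half of this mixed sum with the factor $q^{-\langle\dimv L,\dimv M\rangle}$ obtained above leaves exactly the antisymmetric remainder $\tfrac12\big(\langle\dimv M,\dimv L\rangle-\langle\dimv L,\dimv M\rangle\big)$, which by definition is $\tfrac12\lambda(\dimv L,\dimv M)$.

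I expect this final bookkeeping to be the one genuinely delicate step. Everything else is formal once the Riedtmann--Peng formula and the partition of $\Ext^1$ are in hand; by contrast, the matching of the antisymmetric parts is exactly where the choice of the \emph{opposite}, non-twisted Hall product, the passage through $Q^{op}$, and the sign convention $\lambda(\alpha,\beta)=\langle\beta,\alpha\rangle-\langle\alpha,\beta\rangle$ must all be threaded together consistently. Getting this single sign right — equivalently, fixing which of $\Ext^1(L,M)$ or $\Ext^1(M,L)$ enters the count — is the crux, and it is the point I would check most carefully before declaring the map a homomorphism.
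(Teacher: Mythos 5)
Your proof has the right structure and is precisely the argument of the references the paper cites for this lemma (Lemma~1.7 of Schiffmann's lectures, Lemma~6.1 of Reineke's paper); the paper itself gives no proof. The reduction to the coefficient of $y^{\dimv L+\dimv M}$, the Riedtmann--Peng formula, the partition of the extension group according to the isomorphism class of the middle term, and the final matching of $q$-exponents against the twist $\lambda$ are all exactly as in those sources.

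Since you single out the order of the arguments as the point to be checked, here is how it resolves. With the paper's convention that $c_{LM}^N$ counts submodules isomorphic to $L$ with quotient isomorphic to $M$, the relevant exact sequences are $0\to L\to N\to M\to 0$, which are classified by $\Ext^1(M,L)$; so the Riedtmann--Peng formula reads
\[
c_{LM}^N=\frac{|\Ext^1(M,L)_N|}{|\Hom(M,L)|}\cdot\frac{|\Aut N|}{|\Aut L|\,|\Aut M|}\ko
\]
and the summation over $N$ produces $q^{-\langle\dimv M,\dimv L\rangle}$ rather than $q^{-\langle\dimv L,\dimv M\rangle}$. The exponent bookkeeping then closes if and only if $\lambda(\alpha,\beta)=\langle\alpha,\beta\rangle-\langle\beta,\alpha\rangle$, where $\langle\,,\rangle$ is the Euler form of the same module category $\ca$ in which the Homs and Exts of the integration map are computed. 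That this is the operative normalization is forced by the $A_2$ example: the pentagon identity needs $y_1y_2=qy_2y_1$, i.e.\ $\lambda(e_1,e_2)=+1$, while the unique nonsplit extension in $\ca$ is $0\to S_1\to P_2\to S_2\to 0$, so that $\langle e_1,e_2\rangle-\langle e_2,e_1\rangle=0-(-1)=+1$; it also agrees with the remark that $\lambda(e_i,e_j)$ is the number of arrows $i\to j$ minus the number of arrows $j\to i$ of $Q$, since $\ca$ consists of representations of $Q^{op}$ and hence $\dim\Ext^1_\ca(S_i,S_j)$ counts the arrows $j\to i$. Your version, which pairs $\Ext^1(L,M)$ with the displayed formula $\lambda(\alpha,\beta)=\langle\beta,\alpha\rangle-\langle\alpha,\beta\rangle$ (a formula that is inconsistent with the arrow-counting remark once both Euler forms are read in $\ca$), is the mirror image of this: the two transpositions compensate, which is why your computation closes formally. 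But only the convention above survives the consistency check against the pentagon identity, so the sign you worried about does need to be flipped --- in the extension group and in $\lambda$ simultaneously. Once that is done, the proof is complete.
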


For a proof, see for example Lemma~1.7 of \cite{Schiffmann06}
or Lemma~6.1 of \cite{Reineke03}.
One easily computes that if $M$ is a module with $\End(M)=k$,
then we have
\[
\int \sum [M^n] = \E(y^{\dimv M}).
\]
Hence if we apply the integration map to the identity~(\ref{eq:Hall-algebra-identity})
and use the fact that all objects of $\ca_\mu$ are sums of
a unique stable object, we find the equality
\[
\int \sum_{[\ca]} [L] = \prod^{\curvearrowright}_{M\mbox{\scriptsize stable}} \E(y^{\dimv M}) \ko
\]
where the factors on the right appear in the order of decreasing phase.
This equality clearly shows that the right hand side does not depend
on the choice of $Z$.

\section{Quantum dilogarithm identities from quivers with potential, after Kontsevich-Soibelman}
\label{s:Quantum-Dilog-identities-QP}

\subsection{DT-invariants for quivers with potential} \label{ss:DT-invar-QP}
Let $Q$ be a finite quiver and $k$ a field. Let $kQ$ be the path algebra
of $Q$ and $\hat{kQ}$ its completion with respect to path length. Thus,
the paths in $Q$ form a topological basis of $\hat{kQ}$.
The continuous zeroth Hochschild homology of the completed path algebra
\[
HH_0(\hat{kQ})
\]
is the completion of the quotient of the topological linear space $\hat{kQ}$
by the subspace $[\hat{kQ},\hat{kQ}]$ of all commutators. It has a topological
basis formed by the classes (modulo cyclic permutation) of cyclic paths of $Q$.
For each arrow $\alpha$ of $Q$, we have the {\em cyclic derivative}
\[
\del_\alpha: HH_0(\hat{kQ}) \to \hat{kQ}
\]
which is the continuous linear map taking the equivalence class
of a path $p$ to the sum
\[
\sum vu
\]
taken over all decompositions $p=u\alpha v$.
A {\em potential on $Q$} is an element $W$ of $HH_0(\hat{kQ})$
which does not involve cycles of length $\leq 2$. A potential
is {\em polynomial} if it is linear combination of finitely many cycles.
The {\em Jacobian algebra} $\cp(Q,W)$ is
the quotient of $\hat{kQ}$ by the twosided ideal generated by
the cyclic derivatives $\del_\alpha W$, where $\alpha$ runs through
the arrows of $Q$. We define
\[
\nil(\cp(Q,W))
\]
to be the category of finite-dimensional right $\cp(Q,W)$-modules
(such a module is automatically nilpotent, \ie each element is annihilated by
all long enough paths). Clearly, this is an abelian category where each object
has finite length and whose simples are the modules $S_i$ associated
with the vertices $i$ of $Q$. The author thanks M.~Kontsevich
for informing him \cite{Kontsevich11} that the following statement is still
conjectural.

\begin{conjecture} \label{conj:KS} Suppose that $k=\C$ and
that $W$ is a polynomial potential. Suppose that
we have a discrete stability function
\[
Z: K_0(\nil(\cp(Q,W))) \to \C.
\]
Then the product
\[
\E_{Q,W,Z} = \prod^\curvearrowright_{M \mbox{\scriptsize\ stable}} \E(y^{\dimv M}) \ko
\]
where the factors appear in the order of decreasing phase, is independent
of the choice of $Z$.
\end{conjecture}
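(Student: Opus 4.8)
The plan is to follow the structure of Reineke's proof in Section~\ref{s:Proof-of-Reinekes-theorem}: find a ``Hall-type'' algebra attached to $\ca=\nil(\cp(Q,W))$ in which the Harder-Narasimhan recursion yields an identity with a manifestly $Z$-independent left-hand side, together with a multiplicative ``integration map'' carrying that identity into $\hat{\A}_Q$ so that each semistable block contributes the corresponding $\E$-factor. Since $\ca$ is a finite-length abelian category, King's Proposition~\ref{prop:King} applies and every object has a unique HN-filtration, so one obtains formally
\[
\sum_{[\ca]} [M] = \prod^\curvearrowright_{\mu\mbox{\scriptsize\ decreasing}} \sum_{[\ca_\mu]} [L] \ko
\]
whose left-hand side does not see $Z$. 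Because $Z$ is discrete, each nonzero $\ca_\mu$ is semisimple with a unique simple ($=$ stable) object $M$; as any self-extension of $M$ is again semistable of phase $\mu$ and hence split, $M$ is rigid in $\ca$, and the factor $\sum_{[\ca_\mu]}[L]$ is the ``$\E$-like'' series $\sum_n [M^{\oplus n}]$. Thus the entire content is to construct the integration map and prove it multiplicative.

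Here the hereditary techniques of the Dynkin case break down, which is the source of the conjecture. First, the representation theory of $\cp(Q,W)$ is not independent of the ground field, so one cannot reduce to point-counting over finite fields as in Section~\ref{s:Proof-of-Reinekes-theorem}; one must work over $k=\C$ from the start. Second, $\cp(Q,W)$ is in general of infinite global dimension, so the Euler form is not defined on $K_0$ and the Dynkin twist $q^{\frac12\langle\dimv M,\dimv M\rangle}/|\Aut(M)|$ is neither available nor multiplicative. The correct framework, following Kontsevich-Soibelman \cite{KontsevichSoibelman08,KontsevichSoibelman10}, exploits that the $d$-dimensional $\cp(Q,W)$-representations form the critical locus of the regular function $\opname{Tr}(W)$ on the moduli stack of $d$-dimensional $Q$-representations, inside the $3$-Calabi-Yau category $\cd_{fd}\Gamma$ where the Euler form is exactly the antisymmetric $\lambda$. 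I would therefore replace $\hat{\ch}(\ca)$ by the \emph{motivic Hall algebra} of $(Q,W)$, in which the class of $M$ is weighted by the motivic vanishing cycle of $\opname{Tr}(W)$ (specializing, under point-counting, to the Behrend-function weight of the moduli stack), and define the integration map into a motivic version of $\hat{\A}_Q$ using this critical weighting, arranged so that $\sum_n[M^{\oplus n}]$ for a rigid stable $M$ maps to $\E(y^{\dimv M})$.

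The main obstacle is multiplicativity of this motivic integration map. The Hall product runs over short exact sequences $0\to L\to N\to M\to 0$, and the twist $q^{\frac12\lambda(\dimv L,\dimv M)}$ must be matched by a factorization of the vanishing cycles of $\opname{Tr}(W)$ along these sequences. This is precisely a motivic Thom-Sebastiani statement for the potential, together with the existence of coherent \emph{orientation data} rendering the vanishing-cycle weights compatible with the quantum-torus twist; both ingredients are tied to the (motivic) integrality conjecture for Donaldson-Thomas invariants and are established only under additional technical hypotheses. Granting them, the proof closes as in the Dynkin case: the integration map sends the $Z$-independent left-hand side of the HN-identity to a series independent of $Z$, while its right-hand side becomes $\prod^\curvearrowright_{M\mbox{\scriptsize\ stable}}\E(y^{\dimv M})=\E_{Q,W,Z}$, whence the product is independent of the choice of $Z$. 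It is exactly the unconditional verification of the motivic Thom-Sebastiani and orientation-data inputs that is missing in general, which is why Conjecture~\ref{conj:KS} remains open.
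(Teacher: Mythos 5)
This statement is an open conjecture in the paper, which gives no proof but only the strategy sketch immediately following it (work over $\C$, replace the Ringel--Hall algebra by the motivic Hall algebra, and apply a conjectural integration map to the Harder--Narasimhan identity, the existence and multiplicativity of that map being precisely the missing ingredient, tied to the integral identity of Kontsevich--Soibelman). Your proposal follows essentially that same strategy and correctly locates the same unproven inputs (the motivic integration map, Thom--Sebastiani compatibility with the quantum-torus twist, orientation data), so it is consistent with the paper but, like the paper, is a conditional outline rather than a proof.
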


Evidence for the conjecture comes from Kontsevich-Soibelman's 
deep work in
\cite{KontsevichSoibelman08} \cite{KontsevichSoibelman10}. With a
(much) better definition of $\E_{Q,W,Z}$, it generalizes to arbitrary
stability functions $Z$ and is in fact expected to hold for not necessarily
polynomial potentials. The strategy one would like to adopt for the proof is similar to
Reineke's but
\begin{itemize}
\item we have to work over the complex numbers and have to
replace the Hall algebra by the `motivic Hall algebra',
\cf \cite{Joyce06, Joyce07, Joyce07a, JoyceSong08} as well as
\cite{KontsevichSoibelman08} \cite{Bridgeland10},
\item the existence of the integration map remains conjectural
(it is stated as a theorem in section~6.3 of \cite{KontsevichSoibelman08} and
an important ingredient for its still incomplete proof is the integral
identity studied in section~7.8 of \cite{KontsevichSoibelman10}).
\end{itemize}
For the quivers with potential $(Q,W)$ satisfying the claim of
the conjecture, the {\em refined DT-invariant}
is defined as
\begin{equation} \label{eq:refined-DT-invariant}
\E_{Q,W} = \E_{Q,W,Z} \ko
\end{equation}
where $Z$ is any discrete stability function.

\subsection{Remark on the scaling factor} \label{ss:remark-scaling-factor} In fact,
Kontsevich--Soibelman consider the motivic Hall algebra
not of the abelian category $\nil\cp(Q,W)$ but of the triangulated
$3$-Calabi-Yau category $\cd_{fd} \Gamma(Q,W)$ defined in section~\ref{ss:setup}
below. This category contains $\nil\cp(Q,W)$ as the heart of the natural $t$-structure.
The structure of this larger Hall algebra is unknown except in the
case where the quiver $Q$ has one vertex (labeled $1$) and no arrows.
In this case, the Hall algebra $\ch$ of the corresponding category defined
over a finite field is described explicitly in
\cite{KellerYangZhou09}. The description shows that the largest commutative
quotient of $\ch$ is the algebra
\begin{equation} \label{eq:com-quot}
\Q(q^{1/2})[z_i\; | \; i\in\Z]/\big(z_{i+1} z_i  = \frac{q}{(q-1)^2}\big) \ko
\end{equation}
where the generator $z_i$ is the image of the shifted
simple module $\Sigma^{-i}S_1$, \cf Theorem~5.1 and Lemma~6.1 of
[loc. cit.]. Now notice that for the quiver $Q$ (which does not have arrows),
the algebra $\A_Q$ is commutative. Let us denote by $\Sigma$ its
automorphism mapping the generator $y_1$ to $y_1^{-1}$.
Using~(\ref{eq:com-quot}) we see  that there
is a unique $\Q(q^{1/2})$-algebra homomorphism
\[
\int : \ch \to \A_Q
\]
such that we have $\int \circ \Sigma = \Sigma \circ \int$ and that the image of
$z_0$ is a scalar multiple of $y_1$. We clearly have
\[
\int z_0 = \frac{q^{1/2}}{q-1} \, y_1 \, .
\]
This explains the choice of the scaling factor in the definition
of $\E(y)$ in equation~(\ref{eq:E-series}).

\section{DT-invariants and mutations}
\label{s:DT-invariants-and-mutations}

\subsection{The comparison problem}
Let $(Q,W)$ be a quiver with a polynomial potential as in
section~\ref{s:Quantum-Dilog-identities-QP} and assume
that conjecture~\ref{conj:KS} holds for $(Q,W)$ so that the
refined DT-invariant $\E_{Q,W}$ is well-defined. Let $k$
be a vertex of $Q$ not lying on a loop or a $2$-cycle.
Then the mutated quiver with potential $(Q',W')=\mu_k(Q,W)$
is well-defined, \cf \cite{DerksenWeymanZelevinsky08}.
Let us assume that $W'$ is again polynomial and that
conjecture~\ref{conj:KS} holds for $(Q',W')$ as well. Then, according
to section~\ref{s:Quantum-Dilog-identities-QP}, we have well-defined
refined DT-invariants
\[
\E_{Q,W} \in \hat{\A}_Q \mbox{ and } \E_{Q',W'} \in \hat{\A}_{Q'}.
\]
We wish to compare them. For this, we need to recall the links
between the abelian categories $\ca$ and $\ca'$ of nilpotent
modules over $\cp(Q,W)$ respectively $\cp(Q',W')$.
We do this without supposing that $W$ or $W'$ are polynomial.
We mainly follow section~8 of \cite{KontsevichSoibelman08}
and \cite{Nagao10}.

\subsection{The setup} \label{ss:setup} Let $Q$ be a finite
quiver and $W$ a potential on $Q$ (\cf section~\ref{ss:DT-invar-QP}).
Let $\Gamma$ be the Ginzburg \cite{Ginzburg06} dg algebra
of $(Q,W)$. It is constructed as follows: Let $\tilde{Q}$ be the
graded quiver with the same vertices as $Q$ and whose arrows are
\begin{itemize}
\item the arrows of $Q$ (they all have degree~$0$),
\item an arrow $a^*: j \to i$ of degree $-1$ for each arrow $a:i\to j$ of $Q$,
\item a loop $t_i : i \to i$ of degree $-2$ for each vertex $i$
of $Q$.
\end{itemize}
The underlying graded algebra of $\Gamma(Q,W)$ is the
completion of the graded path algebra $k\tilde{Q}$ in the category
of graded vector spaces with respect to the ideal generated by the
arrows of $\tilde{Q}$. Thus, the $n$-th component of
$\Gamma(Q,W)$ consists of elements of the form
$\sum_{p}\lambda_p p$, where $p$ runs over all paths of degree $n$.
The differential of $\Gamma(Q,W)$ is the unique continuous
linear endomorphism homogeneous of degree~$1$ which satisfies the
Leibniz rule
\[
d(uv)= (du) v + (-1)^p u dv \ko
\]
for all homogeneous $u$ of degree $p$ and all $v$, and takes the
following values on the arrows of $\tilde{Q}$:
\begin{itemize}
\item $da=0$ for each arrow $a$ of $Q$,
\item $d(a^*) = \del_a W$ for each arrow $a$ of $Q$,
\item $d(t_i) = e_i (\sum_{a} [a,a^*]) e_i$ for each vertex $i$ of $Q$, where
$e_i$ is the lazy path at $i$ and the sum runs over the set of
arrows of $Q$.
\end{itemize}
The Ginzburg algebra should be viewed as a refined version of the
Jacobian algebra $\cp(Q,W)$. It is concentrated in (cohomological) degrees $\leq 0$
and $H^0(\Gamma)$ is isomorphic to $\cp(Q,W)$.
We refer to \cite{KellerYang11} for more details on the setup which we now describe.
Let $\cd(\Gamma)$ be the derived category of $\Gamma$,
$\per(\Gamma)$ the perfect derived category and $\cd_{fd}\Gamma$ the
full subcategory of $\cd(\Gamma)$ formed by the dg modules
whose homology is of finite total dimension. The category
$\cd_{fd}\Gamma$ is in fact contained in $\per(\Gamma)$. It
is triangulated, has finite-dimensional morphism spaces (even
its graded morphism spaces are of finite total dimension)
and is $3$-Calabi-Yau, by which we mean that we have bifunctorial
isomorphisms
\[
D\Hom(X,Y) \iso \Hom(Y,\Sigma^3 X) \ko
\]
where $D$ is the duality functor $\Hom_k(?,k)$ and $\Sigma$ the
shift functor. The simple $\cp(Q,W)$-modules $S_i$ can be viewed
as $\Gamma$-modules via the canonical morphism $\Gamma \to H^0(\Gamma)$.
They then become $3$-spherical objects in $\cd_{fd}\Gamma$. They
yield the Seidel-Thomas \cite{SeidelThomas01} twist functors $\tw_{S_i}$.
These are autoequivalences
of $\cd\Gamma$ such that each object $X$ fits into a triangle
\[
\RHom(S_i,X) \ten_k S_i \to X \to \tw_{S_i}(X) \to \Sigma \RHom(S_i,X) \ten_k S_i\;.
\]
By \cite{SeidelThomas01}, the twist functors give rise to a (weak) action on $\cd(\Gamma)$ of the
braid group associated with $Q$, i.e.~the group with generators
$\sigma_i$, $i\in Q_0$, and relations $\sigma_i \sigma_j = \sigma_j \sigma_i$
if $i$ and $j$ are not linked by an arrow and
\[
\sigma_i \sigma_j \sigma_i = \sigma_j \sigma_i \sigma_j
\]
if there is exactly one arrow between $i$ and $j$ (no relation
if there are two or more arrows).

The category $\cd(\Gamma)$ admits a natural $t$-structure
whose truncation functors are those of the natural $t$-structure
on the category of complexes of vector spaces (because $\Gamma$
is concentrated in degrees $\leq 0$). Thus, we have an
induced natural $t$-structure on $\cd_{fd}\Gamma$. Its
heart $\ca$ is canonically equivalent to the category
$\nil(\cp(Q,W))$. In particular, the inclusion of $\ca$
into $\cd_{fd}\Gamma$ induces an isomorphism in the
Grothendieck groups
\[
K_0(\ca) \iso K_0(\cd_{fd}\Gamma).
\]
Notice that the lattice $K_0(\cd_{fd}\Gamma)$ carries the
canonical form defined by
\[
\lambda(X, Y) = \sum_{p\in\Z} (-1)^p\dim \Hom(X,\Sigma^p Y).
\]
It is skew-symmetric thanks to the $3$-Calabi-Yau property. It
also follows from the Calabi-Yau property and from the fact
that $Ext^i_\ca(L,M)=\Ext^i(L,M)$ for $i=0$ and $i=1$ (but
not $i>1$ in general!) that for two objects $L$ and $M$ of $\ca$,
we have
\[
\lambda(L,M) = \dim \Hom(L,M) - \dim \Ext^1(L,M) + \dim\Ext^1(M,L) - \dim\Hom(M,L).
\]
Since the dimension of $\Ext^1(S_i,S_j)$ equals the number of
arrows in $Q$ from $j$ to $i$, we obtain that the matrix of $\lambda$
in the basis of the simples of $\ca$ has its $(i,j)$-coefficient equal
to the number of arrows from $i$ to $j$ minus the number of arrows
from $j$ to $i$ in $Q$.

Suppose that $\Lambda$ is a lattice endowed with a skew-symmetric bilinear form
$\lambda: \Lambda \times \Lambda \to \Z$.
Let $C\subset \Lambda$ be a cone (a subset containing $0$ and stable under
forming sums). Then we define the $\Q(q^{1/2})$-algebra $\A_C$ to be generated by the symbols $y^\alpha$,
$\alpha\in C$, subject to the relations
\[
y^\alpha y^\beta = q^{1/2 \cdot \lambda(\alpha,\beta)} y^{\alpha+\beta}.
\]
If $C$ does not contain $-\alpha$ for any non zero $\alpha$ in $C$,
we define $\hat{\A}_C$ to be the completion of $\A_C$ with respect to
the ideal generated by the $y^\alpha$, $\alpha\neq 0$.

Via the identification $y_i = y^{[S_i]}$, we can now interpret the 
algebra $\hat{\A}_Q$ of section~\ref{s:Quantum-dilog-identities-Dynkin}
intrinsically as the algebra $\hat{\A}_{K_0^+(\ca)}$ associated with the
cone $K^+_0(\ca)$ of positive elements in the Grothendieck group $K_0(\ca)$,
which we endow with the form induced from that of $K_0(\cd_{fd}\Gamma)$
via the canonical isomorphism $K_0(\ca)\iso K_0(\cd_{fd}\Gamma)$.

\subsection{Comparison of categories} \label{ss:comparison-of-categories}
Keep the notations from the preceding section. In addition, let
$k$ be a vertex of $Q$ not lying on a $2$-cycle and let
$(Q',W')$ be the mutation of $(Q,W)$ at $k$ in the sense
of \cite{DerksenWeymanZelevinsky08}. Let $\Gamma'$ be
the Ginzburg algebra associated with $(Q',W')$. Let $\ca'$ be the canonical
heart $\ca'$ in $\cd_{fd}\Gamma'$. There are two canonical equivalences
\cite{KellerYang11}
\[
\cd\Gamma' \to \cd\Gamma
\]
given by functors $\Phi_{\pm}$ related by
\[
\tw_{S_k} \circ \Phi_- \iso \Phi_+ \;.
\]
If we put $P_i=e_i \Gamma$, $i\in Q_0$, and similarly for $\Gamma'$,
then both $\Phi_+$ and $\Phi_-$ send $P'_i$
to $P_i$ for $i\neq k$; the images of $P'_k$ under
the two functors fit into triangles
\[
\xymatrix{P_k \ar[r] & \bigoplus_{k\to i} P_i \ar[r] &  \Phi_-(P'_k) \ar[r] &  \Sigma P_k}
\]
and
\[
\xymatrix{\Sigma^{-1} P_k \ar[r] & \Phi_+(P'_k) \ar[r] & \bigoplus_{j\to k} P_j \ar[r] &  P_k}.
\]
The functors $\Phi_{\pm}$ send $\ca'$ onto the hearts $\mu_k^{\pm}(\ca)$ of
two new t-structures. These can be described in terms of
$\ca$ and the subcategory $\add S_k$ as follows
(\cf figure~\ref{fig:mut-hearts}): Let
$S_k^\perp$ be the right orthogonal subcategory of $S_k$ in $\ca$,
whose objects are the $M$ with $\Hom(S_k,M)=0$. Then
$\mu_k^+(\ca)$ is formed by the objects $X$ of $\cd_{fd}\Gamma$
such that the object $H^0(X)$ belongs to $S_k^\perp$, the object $H^1(X)$ belongs
to $\add S_k$ and $H^p(X)$ vanishes for all
$p\neq 0,1$. Similarly, the subcategory $\mu_k^-(\ca)$ is
formed by the objects $X$ such that the object $H^0(X)$ belongs to
the left orthogonal subcategory $^\perp\!S_k$, the object $H^{-1}(X)$
belongs to $\add(S_k)$ and $H^p(X)$ vanishes for all
$p \neq -1,0$. The subcategory $\mu_k^{+}(\ca)$ is
the {\em right mutation of $\ca$} and $\mu_k^-(\ca)$ is
its {\em left mutation}.
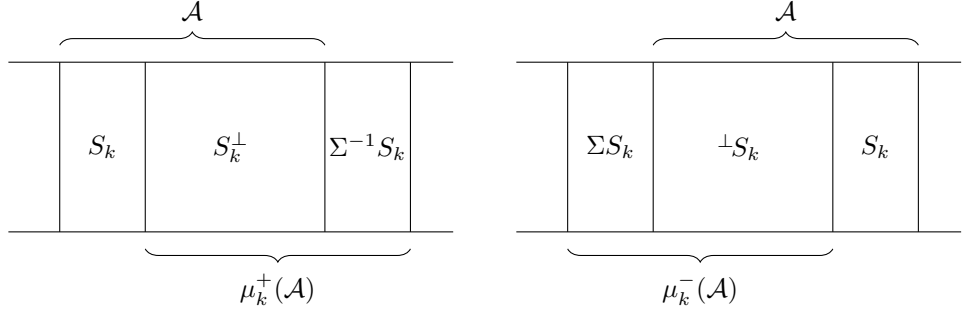
\begin{figure}
\begin{center}
\begin{tikzpicture}[scale=0.45]
\draw (0,4)--(13,4);
\draw (0,9)--(13,9);
\draw (1.5,4)--(1.5,9);
\draw (4,4)--(4,9);
\draw (9.25,4)--(9.25,9);
\draw (11.75,4)--(11.75,9);
\draw (2.75,6.5) node {$S_k$};
\draw (6.5,6.5) node {$S_k^\perp$};
\draw (10.5,6.5) node {$\Sigma^{-1}S_k$};
\draw [decorate, decoration={brace,amplitude=5pt}] (1.5,9.5)--(9.25,9.5)
node [midway, above=6pt] {$\ca$};
\draw [decorate, decoration={brace,amplitude=5pt}] (11.75,3.5)--(4, 3.5)
node [midway, below=6pt] {$\mu_k^+(\ca)$};
\end{tikzpicture}
\quad\quad
\begin{tikzpicture}[scale=0.45]
\draw (0,4)--(13,4);
\draw (0,9)--(13,9);
\draw (1.5,4)--(1.5,9);
\draw (4,4)--(4,9);
\draw (9.25,4)--(9.25,9);
\draw (11.75,4)--(11.75,9);
\draw (2.75,6.5) node {$\Sigma S_k$};
\draw (6.5,6.5) node {$^\perp\!S_k$};
\draw (10.5,6.5) node {$S_k$};
\draw [decorate, decoration={brace,amplitude=5pt}] (4,9.5)--(11.75,9.5)
node [midway, above=6pt] {$\ca$};
\draw [decorate, decoration={brace,amplitude=5pt}] (9.25,3.5)--(1.5, 3.5)
node [midway, below=6pt] {$\mu_k^-(\ca)$};
\end{tikzpicture}
\caption{Right and left mutation of a heart}\label{fig:mut-hearts}
\end{center}
\end{figure}

The construction of these subcategories is very similar to that of the
tilts of \cite{HappelReitenSmaloe96} but notice that
in contrast to the setting of \cite{HappelReitenSmaloe96},
we usually do not have an equivalence between the bounded
derived category $\cd^b(\ca)$ and the ambient Calabi-Yau
category $\cd_{fd}\Gamma$. By construction, we have
\[
\tw_{S_k}(\mu_k^-(\ca)) = \mu_k^+(\ca).
\]
Since the categories $\ca$ and $\mu^\pm(\ca)$ are hearts of
bounded, non degenerate $t$-structures on $\cd_{fd}\Gamma$, their
Grothendieck groups identify canonically with that of
$\cd_{fd}\Gamma$. They are endowed with canonical bases
given by the simples. Those of $\ca$ identify with
the simples $S_i$, $i\in Q_0$, of $\nil(\cp(Q,W))$.
The simples of $\mu_k^+(\ca)$ are $\Sigma^{-1} S_k$,
the simples $S_i$ of $\ca$ such that $\Ext^1(S_k, S_i)$
vanishes and the objects $\tw_{S_k}(S_i)$ where
$\Ext^1(S_k, S_i)$ is of dimension $\geq 1$. By applying
$\tw_{S_k}^{-1}$ to these objects we obtain the
simples of $\mu_k^-(\ca)$.

\subsection{Comparison of the invariants} \label{ss:comparison-of-invariants}
Let us keep the notations of the preceding sections. Let us assume moreover
that the refined DT-invariants $\E_{Q,W}$ and $\E_{Q',W'}$ are well-defined.

The two realizations $\mu_k^\pm(\ca)$ of $\ca'$ as a
subcategory of $\cd_{fd}\Gamma$ yield two ways of
comparing $\E_{Q,W}$ with $\E_{Q',W'}$. Let us consider the category
$\mu_k^+(\ca)$, which is equivalent via $\Phi_+$ to $\ca'$.
We have the algebras
\[
\hat{\A}_Q=\hat{\A}_{K_0^+(\ca)} \quad\mbox{and}\quad
\hat{\A}_{K_0^+(\mu_k^+(\ca))}.
\]
associated with the positive cones of the Grothendieck
groups of $\ca$ and $\mu_k^+(\ca)$.
They have a common subalgebra $\hat{\A}_{[S_k^\perp]}$ associated with
the cone $[S_k^\perp]$ formed by the classes of the objects in
the right orthogonal subcategory of $S_k$.

We choose a stability function $Z$
on $K_0(\ca)$ such that
$S_k$ has the strictly largest phase among all semi-stable objects.
Then we obtain a factorization
\begin{equation} \label{eq:fact-right-orig}
\E_{Q,W} = \E_{S_k} \E_{S_k^\perp} \ko
\end{equation}
where the second factor is associated with
the right orthogonal category $S_k^\perp$ (if $Z$ is
discrete, this factor is the product of the dilogarithms
corresponding to the stable objects in $S_k^\perp$).
Now via the canonical identifications
\[
K_0(\ca) = K_0(\cd_{fd}\Gamma) = K_0(\mu_k^+(\ca)) \ko
\]
we can use the same stability function $Z$ for $\mu^+_k(\ca)$
and then the object $\Sigma^{-1}S_k$ has the strictly smallest phase among all
semi-stable objects, cf. figure~\ref{fig:wall-crossing}.
Moreover, one checks that an object $X$ of $S_k^\perp$
is $Z$-stable (resp. $Z$-semi-stable) in $\ca$ iff it is $Z$-stable
(resp. $Z$-semi-stable) in $\mu^+_k(\ca)$.
Therefore, we obtain a factorization
\begin{equation} \label{eq:fact-right-mut}
\phi_+(\E_{Q', W'}) = \E_{S_k^\perp} \E_{\Sigma^{-1} S_k} \ko
\end{equation}
where $\phi_+: \hat{\A}_{Q'} \to \hat{\A}_{K_0^+(\mu_k^+(\ca))}$ is induced by
the isomorphism $K_0(\ca') \to K_0(\mu^+_k(A))$ provided
by the equivalence $\Phi_+$. Thus, we have
\begin{equation} \label{eq:phi-plus}
\phi_+(y'_i) = \left\{ \begin{array}{ll} y_k^{-1} & \mbox{ if $i=k$} \\
y_i & \mbox{ if there is no arrow $i\to k$ in $Q$} \\
q^{-m^2/2} y_i y_k^m & \mbox{ if there are $m\geq 1$ arrows $i\to k$ in $Q$}
\end{array} \right.
\end{equation}
\begin{figure}
\begin{center}
\begin{tikzpicture}[scale=0.4]
\draw[dashed] (1.5, 2.5) ellipse (3.5 and 2);
\draw (5,4) node {$S_k^\perp$};
\draw[dotted,->] (-4,0) -- (7,0);
\draw[dotted,->] (0,-2) -- (0,7);
\draw[->] (0,0)--(-4,1) node [left] {$Z(S_k)$};
\draw[->] (0,0)--(-1,2);
\draw[->] (0,0)--(0.5,3);
\draw[->] (0,0)--(2,4);
\draw[->] (0,0)--(4,2);
\draw[->] (0,0)--(4,-1) node [right] {$Z(\Sigma^{-1}S_k)$};
\end{tikzpicture}
\quad\quad
\begin{tikzpicture}[scale=0.4]
\draw[dashed] (-1.5, 2.5) ellipse (3.5 and 2);
\draw (-5.25,4) node {$^\perp\!S_k$};
\draw[dotted,->] (-4,0) -- (7,0);
\draw[dotted,->] (0,-2) -- (0,7);
\draw[->] (0,0)--(4,1) node [right] {$Z(S_k)$};
\draw[->] (0,0)--(1,2);
\draw[->] (0,0)--(-0.5,3);
\draw[->] (0,0)--(-2,4);
\draw[->] (0,0)--(-4,2);
\draw[->] (0,0)--(-4,-1) node [left] {$Z(\Sigma S_k)$};
\end{tikzpicture}
\end{center}
\caption{Stable objects in a heart and its right and left mutations}
\label{fig:wall-crossing}
\end{figure}
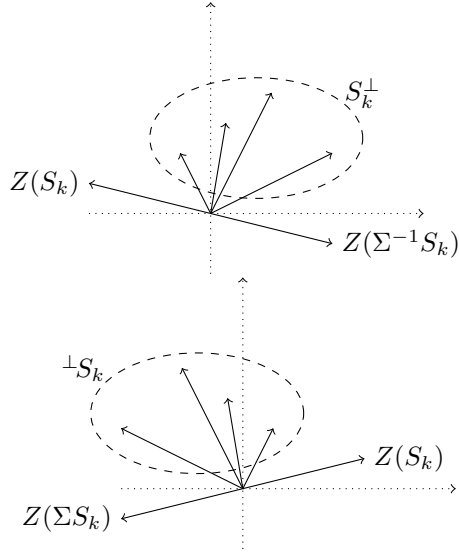
By combining equations~(\ref{eq:fact-right-orig}) and (\ref{eq:fact-right-mut})
we obtain the equality
\begin{equation} \label{eq:wall-crossing}
\E^{-1}_{S_k}\, \E_{Q,W} = \E_{S_k^\perp} = \phi_+(\E_{Q',W'})\, \E^{-1}_{\Sigma^{-1} S_k}
\end{equation}
in $\hat{\A}_{[S_k^\perp]}$. Similarly, one obtains the equality
\begin{equation} \label{eq:left-wall-crossing}
\E_{Q,W}\E_{S_k}^{-1} = \E_{^\perp\!S_k}=\E_{\Sigma S_k}^{-1} \phi_-(\E_{Q',W'})
\end{equation}
in the algebra $\hat{\A}_{[^\perp\!S_k]}$.

\subsection{The rational case} \label{ss:rational-case}
Let us keep the notations of
section~\ref{ss:comparison-of-invariants}. Notice that the series $\E_{Q,W}$ is
invertible in the algebra $\hat{\A}_{Q}$ and so the conjugation
with this series, \ie the automorphism
\[
\Ad(\E_{Q,W}) : u \mapsto \E_{Q,W}\, u \,\E_{Q,W}^{-1}  \ko
\]
is well defined. An element $u$ of $\hat{\A}_Q$ is {\em rational}
if there is a non zero element $s$ of the (non completed) algebra
$\A_Q$ such that $su$ belongs to $\A_Q$. The rational elements
of $\hat{\A}_Q$ form a subalgebra since the non zero elements
of $\A_Q$ satisfy the Ore conditions (by the appendix to
\cite{BerensteinZelevinsky05}). 
The invariant $\E_{Q,W}$ is {\em rational} if
the automorphism $\Ad(\E_{Q,W})$ preserves
the subalgebra of rational elements of $\hat{\A}_Q$.
Clearly, this holds iff $\Ad(\E_{Q,W})(y_i)$
is rational for each $i\in Q_0$. In this case,
the automorphism $\Ad(\E_{Q,W})$ extends to
an automorphism of the (non commutative)
{\em fraction field $\Frac(\A_Q)$} of the (non completed)
algebra $\A_Q$, which is obtained from $\A_Q$
by localizing at the set of all non zero elements.

\begin{lemma} \label{lemma:rational-DT-invariant}
If $\ca$ admits a discrete stability function
with finitely many stables, then $\E_{Q,W}$ is rational.
\end{lemma}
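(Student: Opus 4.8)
The plan is to use the criterion recorded just above: it suffices to prove that $\Ad(\E_{Q,W})(y_i)$ is a rational element for every vertex $i$. The guiding observation is that the hypothesis turns $\E_{Q,W}$ into a \emph{finite} product of quantum dilogarithms, and that conjugation by a single quantum dilogarithm acts rationally; a finite composition of such conjugations then acts rationally as well.

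First I would use the given discrete stability function $Z$, with its finitely many stable objects, to evaluate the (choice-independent) invariant as
\[
\E_{Q,W}=\E_{Q,W,Z}=\prod^{\curvearrowright}_{M\text{ stable}}\E(y^{\dimv M})=\E(y^{\alpha_1})\cdots\E(y^{\alpha_N}),
\]
a product with only $N<\infty$ factors. Hence $\Ad(\E_{Q,W})=\Ad(\E(y^{\alpha_1}))\circ\cdots\circ\Ad(\E(y^{\alpha_N}))$ is a composition of finitely many algebra automorphisms of $\hat{\A}_Q$.

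The technical heart, and the step I expect to require genuine computation, is the adjoint action of one factor. The engine is the functional equation $(1+q^{1/2}y)\,\E(y)=\E(qy)$, read off from the defining series of $\E$. Writing $c=\lambda(\alpha,\beta)$, so that $y^\alpha y^\beta=q^{c}y^\beta y^\alpha$ and therefore $f(y^\alpha)\,y^\beta=y^\beta\,f(q^{c}y^\alpha)$ for every power series $f$, one obtains
\[
\Ad(\E(y^\alpha))(y^\beta)=\E(y^\alpha)\,y^\beta\,\E(y^\alpha)^{-1}=y^\beta\,\E(q^{c}y^\alpha)\,\E(y^\alpha)^{-1}.
\]
Iterating the functional equation collapses $\E(q^{c}y^\alpha)\,\E(y^\alpha)^{-1}$ to the finite product $\prod_{j=1}^{c}(1+q^{\,j-1/2}y^\alpha)$ when $c>0$, to the inverse of the analogous product when $c<0$, and to $1$ when $c=0$. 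In each case $\Ad(\E(y^\alpha))(y^\beta)$ is $y^\beta$ times a (possibly inverted) polynomial in $y^\alpha$, hence a rational element; running the same computation with $\E(y^\alpha)^{-1}$ shows that $\Ad(\E(y^\alpha))^{-1}(y^\beta)$ is rational too.

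To finish, I would specialize to $\beta=e_i$: each factor $\Ad(\E(y^{\alpha_r}))$ and its inverse send every generator $y_i$ to a rational element, so --- the rational elements forming a subalgebra and $\A_Q$ being an Ore domain --- each $\Ad(\E(y^{\alpha_r}))$ carries the subalgebra of rational elements into itself. Passing $y_i$ through the $N$ factors one at a time then keeps us among the rational elements at every stage, so $\Ad(\E_{Q,W})(y_i)$ is rational for each $i$, which is what was needed. The only real work is the single-factor formula above; finiteness of the product, which is precisely where the hypothesis enters, provides everything else. Were the product infinite there would be no reason for rationality to persist, as the Kronecker quiver with its infinite family of stable objects illustrates.
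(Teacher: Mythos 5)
Your proposal is correct and follows essentially the same route as the paper: reduce to a finite composition of conjugations $\Ad(\E(y^\alpha))$, then use the functional equation $\E(y)(1+q^{1/2}y)=\E(qy)$ together with the $q$-commutation $y^\alpha y^\beta=q^{\lambda(\alpha,\beta)}y^\beta y^\alpha$ to write $\Ad(\E(y^\alpha))(y^\beta)=y^\beta\,\E(q^{\lambda(\alpha,\beta)}y^\alpha)\E(y^\alpha)^{-1}$ as $y^\beta$ times a finite (possibly inverted) product of binomials $(1+q^{j-1/2}y^\alpha)$. This is exactly the paper's argument, including the observation that finiteness of the set of stables is where the hypothesis enters.
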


\begin{proof} Under the hypothesis, the automorphism
$\Ad(\E_{Q,W})$ is a composition of finitely many automorphisms
of the form $\Ad(\E(y^\alpha))$ and so it is enough to check
that such an automorphism preserves $\Frac(\A_Q)$. Indeed,
one checks from the definition that
\[
\E(y) (1+q^{1/2} y) = \E(qy).
\]
Therefore, for each $m\geq 0$, we have
\begin{equation} \label{eq:E-formula1}
\E(q^m y)\E(y)^{-1} = \prod_{j=1}^m (1+q^{-1/2+j}y)
\end{equation}
and
\begin{equation} \label{eq:E-formula2}
\E(q^{-m} y) \E(y)^{-1} = \prod_{j=1}^m(1+q^{-m-1/2+j}y)^{-1}.
\end{equation}
Now suppose that we have $\alpha$, $\beta$ such that $m=\lambda(\alpha,\beta)$ and
therefore $y^\alpha y^\beta = q^m y^\beta y^\alpha$ and
\[
y^{-\beta} y^\alpha y^\beta = q^m y^\alpha.
\]
Then we find
\begin{align*}
\E(y^\alpha) y^\beta \E(y^\alpha)^{-1} &= y^\beta y^{-\beta} \E(y^\alpha) y^\beta \E(y^\alpha)^{-1} \\
&= y^\beta\, \E(q^m y^\alpha) \E(y^\alpha)^{-1}
\end{align*}
By the above formulas~(\ref{eq:E-formula1}) and (\ref{eq:E-formula2}),
this expression does belong to $\Frac(\A_Q)$.
\end{proof}

For example, if $Q$ is a Dynkin quiver (and so $W=0$), then
$\E_{Q,W}$ is rational since we can find a discrete stability
function whose stables are the simples. More generally, the
same argument shows that $\E_{Q,W}$ is rational for any
acyclic quiver $Q$. Other examples of rational $\E_{Q,W}$
arise  from (\cf \cite{Keller11} for details):
\begin{itemize}
\item the quivers in Kontsevich-Soibelman's class $\cp$,
\cf section~8.4 of \cite{KontsevichSoibelman08},
\item the quivers with potential $(Q,W)$
associated in Proposition~5.12 of \cite{Keller10a} with pairs of acyclic
quivers,
\item the quivers with potential appearing in the
work of Buan-Iyama-Reiten-Scott \cite{BuanIyamaReitenScott09}
respectively Geiss-Leclerc-Schr\"oer \cite{GeissLeclercSchroeer08a},
\cf \cite{AmiotReitenTodorov11}.
\end{itemize}
On the other hand, the quiver
\[
\xymatrix@R=1.5cm@C=1.5cm{ & \bullet \ar@<-2pt>[dl]_{a_3} \ar@<2pt>[dl]^{b_3} & \\
\bullet \ar@<2pt>[rr]^{a_2} \ar@<-2pt>[rr]_{b_2} &  & \bullet \ar@<2pt>[ul]^{b_1} \ar@<-2pt>[ul]_{a_1} }
\]
with the potential $W=a_1 a_2 a_3 + b_1 b_2 b_3$ does not yield
a rational invariant $\E_{Q,W}$, \cf section~8.4 of \cite{KontsevichSoibelman08}.

\subsection{The interwiners}  \label{ss:intertwiners}
Let us keep the hypotheses
of section~\ref{ss:comparison-of-invariants} and suppose moreover
that $\E_{Q,W}$ is rational. Let $\Sigma: \Frac(\A_Q) \to \Frac(\A_Q)$ be
the automorphism mapping each $y^\alpha$ to $y^{-\alpha}$. Define
the {\em non commutative DT-invariant} to be the
automorphism
\[
DT_{Q,W} = \Ad(\E_{Q,W})\circ \Sigma
\]
of $\Frac(\A_Q)$.
This invariant is preserved under left and right mutations up to conjugacy.
Indeed, if we consider right mutation in the setting of
section~\ref{ss:comparison-of-invariants}, the equality
\begin{equation} \label{eq:wall-crossing2}
\E^{-1}_{S_k}\, \E_{Q,W} = \E_{S_k^\perp} = \phi_+(\E_{Q',W'})\, \E^{-1}_{\Sigma^{-1} S_k}
\end{equation}
yields
\[
\Ad(\E(y_k))^{-1} \Ad(\E_{Q,W}) = \phi_+ \Ad(\E_{Q',W'}) \phi_+^{-1} \Ad(\E(y_k^{-1}))^{-1}
\]
and if we pre-compose with $\Sigma$ and post-compose with $\Ad(\E(y_k))$, we obtain
\[
\Ad(\E_{Q,W}) \circ \Sigma =  \Ad(\E(y_k))\phi_+  \Ad(\E_{Q',W'})\,\Sigma \,
 \phi_+^{-1}\Ad(\E(y_k))^{-1}
\]
so that conjugation by the `right intertwiner'
\begin{equation} \label{eq:FG-intertwiner}
\Ad(\E(y_k)) \circ \phi_+ = \phi_+\circ \Ad(\E({y'}_k^{-1})): \Frac(\A_{Q'}) \to \Frac(\A_Q)
\end{equation}
transforms $DT_{Q',W'}$ into $DT_{Q,W}$. This is exactly the quantum mutation operator defined by
Fock and Goncharov in section~3.1 of \cite{FockGoncharov09a}
(they write $q$ for the indeterminate we denote by $q^{1/2}$).
If we consider left mutation, then the equation~\ref{eq:left-wall-crossing} yields
the `left intertwiner'
\begin{equation}
\label{eq:FG-left-intertwiner}
\phi_- \circ \Ad(\E(y_k))^{-1}.
\end{equation}
Remarkably, as we will see in lemma~\ref{lemma:mutation-involutive} below, the
right and left intertwiners (\ref{eq:FG-intertwiner}) and (\ref{eq:FG-left-intertwiner})
are {\em equal}.  Explicitly, if $r\geq 0$ is the number of arrows $k\to i$
and $s\geq 0$ the number of arrows $i \to k$ in $Q$, they are both given by
\begin{equation} \label{eq:intertwiner+}
\Ad(\E(y_k))\circ \phi_+(y'_i) =
\left\{ \begin{array}{ll} y_i \prod_{j=1}^r (1+q^{-1/2+j} y_k) & \mbox{if $r> 0$}\\
y_i & \mbox{if $r=s=0$} \\
y_i y_k^s q^{-s^2/2} \prod_{j=1}^s (1+q^{1/2-j}y_k)^{-1} &  \mbox{if $s> 0$}
\end{array} \right.
\end{equation}
When $q^{1/2}$ is specialized to $1$ and $Q$ replaced with $Q^{op}$, these formulas
yield the transformation rule for $Y$-seeds in the sense of Fomin-Zelevinsky,
\cf \cite{FominZelevinsky07}. Notice that in deducing formula~(\ref{eq:FG-intertwiner}),
we chose to pre-compose with $\Sigma$. If we post-compose with $\Sigma$,
we obtain a variant of the intertwiner which fortunately
has the same good properties and which, upon specialization of
$q^{1/2}$ to $1$, yields Fomin-Zelevinsky's transformation rule
for $Y$-seeds (without replacing $Q$ by $Q^{op}$).

\subsection{Mutation is an involution} \label{ss:mutation-involutive}
In the setting of section~\ref{ss:comparison-of-categories}, one
checks easily that
\[
\mu_k^+(\mu_k^-(\ca))= \ca.
\]
We also know that $\mu_k^+(\ca) = \tw_{S_k}(\mu^-_k(\ca))$.
Thus, we obtain
\[
\mu_k^+(\mu_k^+(\cb)) = \tw_{S_k}(\cb)
\]
for $\cb=\mu^-_k(\ca)$. Thus, mutation of hearts is an
involution only up to the braid group action. Remarkably,
the mutation intertwiner~(\ref{eq:FG-intertwiner}) `squares'
to the identity (as do its variants) by the following
lemma (\cf Lemma~3.7 of \cite{FockGoncharov09}),
where we only write `unprimed' variables to avoid
clutter in the notation.
\begin{lemma} \label{lemma:mutation-involutive}
\begin{itemize} \item[a)] We have
\[
\Ad(\E(y_k))\circ \Ad(\E(y_k^{-1})) = t_k^{-1}
\]
where $t_k : K_0(\ca)\to K_0(\ca)$ is induced by
the twist functor $\tw_{S_k}$.
\item[b)] The composition
\[
\xymatrix{\Frac(\A_{\mu_k^2(Q)}) \ar[rr]^{\phi_+\circ \Ad(\E(y_k))} & &\Frac(\A_{Q'}) \ar[rr]^{\phi_+\circ \Ad(\E(y_k))} & & \Frac(\A_Q) }
\]
is the identity.
\item[c)] The right and left intertwiners $\Ad(\E(y_k)) \circ \phi_+$ and
$\phi_- \circ \Ad(\E(y_k))^{-1}$ are equal.
\end{itemize}
\end{lemma}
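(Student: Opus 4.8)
The plan is to reduce parts~b) and~c) to part~a), which is the only genuinely computational point, and then to feed part~a) into the categorical identities recorded in sections~\ref{ss:comparison-of-categories} and~\ref{ss:mutation-involutive}.

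For part~a), I would argue as in the proof of Lemma~\ref{lemma:rational-DT-invariant}. Since $\lambda(e_k,e_k)=0$, both $\Ad(\E(y_k))$ and $\Ad(\E(y_k^{-1}))$ fix every power series in $y_k$ alone. Hence, writing $m=\lambda(e_k,\beta)$ for an arbitrary $\beta$ and using the conjugation formula established there (with $\alpha=e_k$ and with $\alpha=-e_k$), one gets
\[
\Ad(\E(y_k))\circ\Ad(\E(y_k^{-1}))(y^\beta)=y^\beta\,\E(q^m y_k)\E(y_k)^{-1}\,\E(q^{-m}y_k^{-1})\E(y_k^{-1})^{-1}.
\]
I would then substitute the explicit products~(\ref{eq:E-formula1}) and~(\ref{eq:E-formula2}) and rewrite each factor via $(1+q^{c}y_k^{-1})^{-1}=q^{-c}y_k(1+q^{-c}y_k)^{-1}$; the linear factors cancel in pairs and only the monomial $q^{m^2/2}y_k^m$ survives, so the expression equals $y^{\beta+m e_k}$. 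On the other hand the spherical twist acts on $K_0$ by $\gamma\mapsto\gamma-\lambda([S_k],\gamma)[S_k]$, so its inverse sends $\beta$ to $\beta+\lambda(e_k,\beta)[S_k]=\beta+m e_k$; thus the two automorphisms agree on every $y^\beta$, which is a). (The cases $m\leq 0$ follow by the same telescoping.)

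For part~b), I would read the two arrows as the right intertwiner~(\ref{eq:FG-intertwiner}) in the respective copies, using $\Ad(\E(y_k))\circ\phi_+=\phi_+\circ\Ad(\E(y_k^{-1}))$ to pass freely between its two presentations and keeping careful track of which Grothendieck group each operator acts on. Applying~(\ref{eq:FG-intertwiner}) to the outer factor slides its $\Ad(\E(y_k))$ across $\phi_+$ and converts it into $\Ad(\E(y_k^{-1}))$ on the middle algebra $\A_{Q'}$; this creates the block $\Ad(\E(y_k^{-1}))\circ\Ad(\E(y_k))$, which by part~a) is the automorphism $t_k^{-1}$. It then remains to absorb this factor using the categorical input of section~\ref{ss:mutation-involutive}: iterating the right mutation of hearts gives $\mu_k^+\mu_k^+(\ca)=\tw_{S_k}(\ca)$, so $\phi_+\circ\phi_+$ is the automorphism $t_k$ induced by the twist, while the naturality of the twist under $\Phi_+$ (which carries the simple at $k$ to $\Sigma^{-1}S_k$, and $\tw_{\Sigma^{-1}S_k}=\tw_{S_k}$) lets me commute the middle $t_k^{-1}$ past $\phi_+$. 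The composite then collapses to $t_k^{-1}\circ t_k=\id$.

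For part~c), I would start from the relation $\tw_{S_k}\circ\Phi_-\iso\Phi_+$ of section~\ref{ss:comparison-of-categories}, which on Grothendieck groups reads $\phi_+=t_k\circ\phi_-$. Substituting this into the right intertwiner $\Ad(\E(y_k))\circ\phi_+$, and using that conjugation by $\phi_-$ carries $\Ad(\E(y_k'))$ to $\Ad(\E(y_k^{-1}))$ (because $\Phi_-(S_k')=\Sigma S_k$, whence $\phi_-(y_k')=y_k^{-1}$), reduces the desired equality with the left intertwiner~(\ref{eq:FG-left-intertwiner}) to the purely algebraic identity $\Ad(\E(y_k))\circ t_k=\Ad(\E(y_k^{-1}))^{-1}$ on $\A_Q$. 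This is just a rearrangement of part~a) in the form $\Ad(\E(y_k^{-1}))\circ\Ad(\E(y_k))=t_k^{-1}$, so c) follows. The step I expect to be the real obstacle is not any analysis but the categorical bookkeeping underlying b) and c): one must be scrupulous about which heart, Grothendieck group and variable (primed $y_k'$ versus unprimed $y_k$) each operator lives on, and one must justify carefully that the Seidel--Thomas twists are natural with respect to the equivalences $\Phi_\pm$, so that indeed $\phi_+\circ\phi_+=t_k$ and so that $t_k^{-1}$ may be moved past $\phi_+$. Once part~a) and these naturality statements are in place, b) and c) are formal.
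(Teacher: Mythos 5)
Your proof is correct and follows the paper's own route: part~a) is established by exactly the computation the paper gives, namely conjugating $y_i$ to obtain $y_i\,\E(q^m y_k)\E(y_k)^{-1}\E(q^{-m}y_k^{-1})\E(y_k^{-1})^{-1}=y^{e_i+me_k}=t_k^{-1}(y_i)$ via the telescoping products of Lemma~\ref{lemma:rational-DT-invariant}. The paper dismisses b) and c) as ``easy consequences,'' and your derivations --- sliding $\Ad(\E(y_k))$ across $\phi_+$, using $\phi_+\circ\phi_+=t_k$ and $\phi_+=t_k\circ\phi_-$ on Grothendieck groups together with the naturality of the twist under $\Phi_\pm$ --- are precisely the intended bookkeeping, carried out correctly.
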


\begin{proof} a) If $y_k y_i = q^m y_i y_k$, one computes, as
in Lemma~\ref{lemma:rational-DT-invariant}, that
\begin{align*}
\Ad(\E(y_k)) \Ad(\E(y_k^{-1})(y_i) &=
  y_i \E(q^m y_k) \E(y_k)^{-1} \E(q^{-m} y_k^{-1}) \E(y_k^{-1})^{-1}  \\
  &= y^{e_i +m e_k} = t_k^{-1}(y_i).
\end{align*}
b) and c) are an easy consequences.
\end{proof}

\section{Compositions of mutations}
\label{s:compositions-of-mutations}

\subsection{The groupoid of cluster collections} \label{ss:groupoid-of-cluster-collections}
To state identities
between longer compositions of intertwiners~(\ref{eq:FG-intertwiner}), we
now introduce a suitable groupoid (a category where all morphisms are
invertible). We fix a quiver with potential $(Q,W)$ and
work in the setup of section~\ref{ss:setup}.
A {\em cluster collection} \cite{KontsevichSoibelman08}
is a sequence of objects $S'_1$, \ldots, $S'_n$ of $\cd_{fd}\Gamma$
such that
\begin{itemize}
\item[a)] the $S'_i$ are spherical;
\item[b)] for $i\neq j$, the graded space $\Ext^*(S'_i,S'_j)$
vanishes or is concentrated either in degree $1$ or in degree $2$;
\item[c)] the $S'_i$ generate the triangulated category $\cd_{fd}\Gamma$.
\end{itemize}
One can show \cite{RickardRouquier10} \cite{KellerNicolas10}
that in this case, the closure $\ca'$ of the $S'_i$
under iterated extensions is the heart of a non degenerate bounded $t$-structure
on $\cd_{fd}\Gamma$ and that the simples of $\ca'$ are the
$S'_i$ (up to isomorphism). On the other hand, if $\ca'$ is the heart
of a non degenerate bounded $t$-structure on $\cd_{fd}\Gamma$, then
the simples $(S'_1, \ldots, S'_n)$ will satisfy c) but not necessarily
a) and b). If they do, we call $\ca'$ a {\em cluster heart}. In this
way, we obtain a bijection between cluster hearts and permutation
classes of cluster collections, \cf~\cite{KellerNicolas10}.

The {\em groupoid of cluster collections $\ccl=\ccl_Q$} has as objects the
cluster collections $S'$ reachable from the sequence $S=(S_1, \ldots, S_n)$
of the simples of the initial cluster heart $\ca$ by a sequence
\[
\xymatrix{S=S^{(0)} \ar[r] & S^{(1)} \ar[r] & \ldots \ar[r] & S^{(N)} = S'}
\]
of (positive and negative) mutations and permutations, where all
the intermediate sequences $S^{(i)}$ are cluster collections. The
morphisms of $\ccl$ are the formal compositions of mutations and
permutations subject
to the relations valid in the symmetric group and the relations
\[
\sigma \circ \mu^\eps_k = \mu^\eps_{\sigma(k)} \circ \sigma
\]
for all permutations $\sigma$ and mutations $\mu_k^\eps$.
For example, for the quiver $Q:1\to 2$, with the notations of
section~\ref{ss:Reineke-s-identities}, it is easy to check
that we have the following two morphisms from
$(S_1, S_2)$ to $(\Sigma^{-1}S_2, \Sigma^{-1}S_1)$ in $\ccl$:
\begin{align}
\label{eq:ccg-m1} & \xymatrix{(S_1,S_2) \ar[r]^-{\mu_1^+} & (\Sigma^{-1}S_1, S_2)\ar[r]^-{\mu_2^+} &
(\Sigma^{-1}S_1, \Sigma^{-1}S_2) \ar[r]^-\tau & (\Sigma^{-1}S_2, \Sigma^{-1}S_1)} \\
\label{eq:ccg-m2} & \xymatrix{(S_1,S_2) \ar[r]^-{\mu_2^+} & (P_2, \Sigma^{-1}S_2) \ar[r]^-{\mu_1^+} &
(\Sigma^{-1}P_2, S_1) \ar[r]^-{\mu_2^+} & (\Sigma^{-1}S_2, \Sigma^{-1} S_1)}
\end{align}

Given a cluster collection $S'=(S'_1, \ldots, S'_n)$ its quiver
$Q_{S'}$ has the vertex set $\{1, \ldots, n\}$ and the
number of arrows from $i$ to $j$ equals the dimension
of $\Ext^1(S'_j,S'_i)$. Using the intertwiners~(\ref{eq:FG-intertwiner})
and the natural action of the permutation groups, we clearly
obtain a functor
\[
\FG: \ccl^{op} \to \sf \ko
\]
to the groupoid $\sf$ of skew fields which takes a cluster collection
$S'$ to $\Frac(\A_{Q_{S'}})$.
The following theorem is a corollary of the theory of cluster algebras and their (additive)
categorification as developed by Fomin-Zelevinsky,
Berenstein-Fomin-Zelevinsky, Fock-Goncharov, Derksen-Weyman-Zelevinsky \ldots .

\begin{theorem} \label{thm:main-thm} The image of a morphism $\alpha: S \to S'$ of
the groupoid of cluster collections under the functor $\FG$ only
depends on the orbit of $S'$ under the braid group $\Braid(Q)$.
\end{theorem}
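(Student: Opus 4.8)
The plan is to split the assertion into an elementary \emph{braid-invariance} and a substantial \emph{target-invariance}, and to import the latter from the theory of cluster algebras via the passage to the cluster category $\cc_\Gamma$. First I would check that the statement is well posed. Every element of $\Braid(Q)$ acts on $\cd_{fd}\Gamma$ by an autoequivalence (a composition of twist functors $\tw_{S_i}^{\pm 1}$), and such an autoequivalence preserves all the spaces $\Ext^1(S'_j,S'_i)$; hence all cluster collections in one braid orbit share the same quiver, so that $\Frac(\A_{Q_{S'}})=\Frac(\A_{Q_{S''}})$ whenever $S''\in\Braid(Q)\cdot S'$, and it is meaningful to compare $\FG(\alpha)$ and $\FG(\beta)$ for $\alpha:S\to S'$ and $\beta:S\to S''$. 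For the braid-invariance itself I would use Lemma~\ref{lemma:mutation-involutive}: the double right mutation $\mu_k^+\circ\mu_k^+$ realizes the twist $\tw_{S_k}$ and is sent by $\FG$ to the identity, and since the $\tw_{S_i}$ and their inverses generate $\Braid(Q)$, every braid element is represented by a morphism $\gamma:S'\to S''$ of $\ccl$ with $\FG(\gamma)=\id$. Choosing such a $\gamma$ and using the contravariance of $\FG$ gives $\FG(\gamma\circ\alpha)=\FG(\alpha)\circ\FG(\gamma)=\FG(\alpha)$, while $\gamma\circ\alpha$ and $\beta$ share the target $S''$. Thus the whole theorem reduces to the target-invariance: two morphisms of $\ccl$ with the same source $S$ and the same target $S''$ have equal image under $\FG$.

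For the target-invariance I would exploit the comparison between $\ccl$ and the groupoid $\clt$ of cluster-tilting sequences in $\cc_\Gamma=\per\Gamma/\cd_{fd}\Gamma$ announced in the introduction. Each cluster collection $S'$ determines a cluster-tilting sequence (its ``dual'' object, via the projectives $P'_i$), and two collections yield the same cluster-tilting sequence exactly when they are braid-equivalent; mutations and permutations of collections project to mutations and permutations of cluster-tilting sequences, and under this projection the quantum mutation operator of section~\ref{ss:intertwiners}, given explicitly by~(\ref{eq:intertwiner+}), becomes precisely the Fock--Goncharov quantum $Y$-seed mutation of \cite{FockGoncharov09a}. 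One then has a factorization through a projection $\pi:\ccl\to\clt$, and the desired equality follows from the well-definedness of the induced functor $\FG:\clt^{op}\to\sf$, i.e.\ from the fact that the quantum $Y$-seed attached to a cluster-tilting object is independent of the mutation path reaching it. This last fact is the heart of (quantum) cluster theory: it is a consequence of Fomin--Zelevinsky's separation of additions \cite{FominZelevinsky07} together with the additive categorification of \cite{DerksenWeymanZelevinsky10} and \cite{Plamondon10b}, which realizes the relevant $g$-vectors and $F$-polynomials inside the very category $\cc_\Gamma$.

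The main obstacle is exactly the construction and verification of the bridge $\pi:\ccl\to\clt$ and the matching of intertwiners: one must show that the categorically defined groupoid of cluster collections (built from hearts, spherical twists, and the left/right ambiguity of mutation) projects onto the combinatorial groupoid of cluster-tilting sequences, collapsing precisely the braid action via Lemma~\ref{lemma:mutation-involutive}, and that the quantum mutation operator coincides there with the Fock--Goncharov $Y$-mutation. Once this dictionary is in place the conclusion is essentially a citation, but the genuine mathematical content --- path-independence of the $Y$-seed --- is global and cannot be obtained from the single-mutation identities of Lemma~\ref{lemma:mutation-involutive} alone; it is where the full strength of cluster theory, or alternatively Nagao's device of nearby cluster collections developed in section~\ref{ss:nearby-cluster-collections} following \cite{Nagao10}, is required.
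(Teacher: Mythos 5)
Your first reduction --- from braid-invariance to target-invariance via Lemma~\ref{lemma:mutation-involutive} and the contravariance of $\FG$ --- is exactly the reduction the paper makes in section~\ref{ss:proof-main-thm} (the paper phrases it by passing to representatives of the $\Braid(Q)$-orbits, namely the nearby cluster collections of Theorem~\ref{thm:nearby-projection}, rather than through $\clt$ and Theorem~\ref{thm:Keller-Nicolas}, which as stated needs Jacobi-finiteness; but this is a cosmetic difference). The problem lies in your treatment of the target-invariance. First, there is a circularity of formulation: the functor $\FG:\clt^{op}\to\sf$ is \emph{defined} in the paper by descending $\FG:\ccl^{op}\to\sf$ along the bijection of Theorem~\ref{thm:Keller-Nicolas}, so ``the well-definedness of the induced functor on $\clt$'' is not something you can invoke --- it is precisely equivalent to the statement you are proving. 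That could be repaired by rephrasing, but it signals where the real work is being swept under the rug.

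The genuine gap is the quantum-to-classical bridge. The results you cite for path-independence --- Fomin--Zelevinsky's separation of additions \cite{FominZelevinsky07} and the additive categorification of \cite{DerksenWeymanZelevinsky10} and \cite{Plamondon10b} --- are statements about \emph{commutative} cluster variables and classical seeds. The theorem, however, asserts the equality of automorphisms of the skew field $\Frac(\A_{Q_{S'}})$, i.e.\ of \emph{quantum} $Y$-seeds. Knowing that two mutation paths produce the same classical seed (equivalently, reach the same cluster-tilting object) does not by itself imply that they produce the same quantum seed. The paper's proof supplies exactly the two missing links: the ``double torus'' of \cite{FockGoncharov09a}, which converts the statement about quantum $Y$-variables into one about seeds of a quantum cluster algebra (with principal coefficients), and Berenstein--Zelevinsky's Theorem~6.18 of \cite{BerensteinZelevinsky05}, which identifies the exchange graph of the quantum cluster algebra with that of its classical specialization, so that the classical quiver-Grassmannian formula of \cite{DerksenWeymanZelevinsky10}, \cite{Nagao10}, \cite{Plamondon10a} can be fed back into the quantum setting. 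Your sketch names the classical endpoint of this chain but omits both intermediate reductions, and without them the argument does not close. (Your closing remark that Nagao's nearby cluster collections are ``alternatively'' required is also slightly off: in the paper they are part of the first, easy reduction, not a substitute for the quantum-to-classical step.)
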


We will sketch a proof in section~\ref{ss:proof-main-thm} below.
Notice that already the statement that the image of $\alpha$ only
depends on $S'$ is very strong. By the easy Lemma~\ref{lemma:mutation-involutive},
it implies the statement of the theorem. As an example, consider
the two morphisms~(\ref{eq:ccg-m1}) and (\ref{eq:ccg-m2}). By the theorem,
they yield the equality
\begin{equation} \label{eq:example-ccg}
\Ad(\E(y_1)) \phi_1 \Ad(\E(y_2)) \phi_2 \, \tau = \Ad(\E(y_2))\phi_2 \Ad(\E(y_1)) \phi_1 \Ad(\E(y_2)) \phi_2
\end{equation}
in the groupoid $\sf$. Notice that the symbols $\phi_1$ and $\phi_2$ denote different
maps depending on the source and target fields. They are given, in the order
of occurrence above, by the matrices
\[
\left[\begin{array}{cc} -1 & 0 \\ 0 & 1 \end{array}\right] \ko
\left[\begin{array}{cc} 1 & 0 \\ 0 & -1 \end{array}\right] \ko
\left[\begin{array}{cc} 1 & 0 \\ 1 & -1 \end{array}\right] \ko
\left[\begin{array}{cc} -1 & 1 \\ 0 & 1 \end{array}\right] \ko
\left[\begin{array}{cc} 1 & 0 \\ 1 & -1 \end{array}\right].
\]
Thus, we have
\begin{align}
\label{eq:a2-dt1} \Ad(\E(y_1))\phi_1 \Ad(\E(y_2)) \phi_2 \tau & = \Ad(\E(y_1)\E(y_2)) \phi_1\phi_2 \tau \mbox{ and } \\
\label{eq:a2-dt2} \Ad(\E(y_2))\phi_2 \Ad(\E(y_1)) \phi_1 \Ad(\E(y_2)) \phi_2 &= \Ad(\E(y_1)\E(q^{-1/2} y_1 y_2) \E(y_1)) \phi_2 \phi_1 \phi_2.
\end{align}
Since we have $\phi_1 \phi_2 \tau = \phi_2\phi_1\phi_2$,
the equality~(\ref{eq:example-ccg}) is in fact a consequence
of the pentagon identity~(\ref{eq:SFK-identity})
\[
\E(y_1) \E(y_2) = \E(y_2) \E(q^{-1/2} y_1 y_2) \E(y_1).
\]

\subsection{Action of autoequivalences of the derived category} \label{ss:action-autoequivalences-der-cat}
Let $F: \cd_{fd}\Gamma \to \cd_{fd}\Gamma$ be a triangle equivalence such
that $F$ is {\em reachable}, \ie there is a sequence of (per-)mutations
linking the initial cluster collection $S=(S_1, \ldots, S_n)$ to $FS$.
Thus, the collection $FS$ still belongs to $\ccl$ and of course,
so does $FS'$ for any other cluster collection $S'$ in $\ccl$.
With $F$, we will associate a canonical automorphism $\zeta(F)$
of the functor
\[
\FG: \ccl^{op} \to \sf.
\]
Indeed, for any cluster collection $S'$, we have an
isomorphism of quivers $Q_{FS'} = Q_{S'}$ given by the
bijection $FS'_i \mapsto S'_i$ on the set of vertices. Thus,
we have a canonical isomorphism of skew fields
\[
\FG(FS') \iso \FG(S').
\]
We define $\zeta(F)(S') : \FG(S') \to \FG(S')$ to be the
composition of this isomorphism with $\FG(\alpha)$ for any
morphism $\alpha: S' \to FS'$. It is immediate from
Theorem~\ref{thm:main-thm}, that $\zeta(F)$ is indeed
an automorphism of $\FG(S')$, and that $\zeta$ defines
a homomorphism from the group of (isomorphism classes of)
reachable autoequivalences of $\cd_{fd}\Gamma$ to the
group of automorphisms of the functor $\FG$.
The following theorem is based on Nagao's
ideas \cite{Nagao10}.

\begin{theorem} Suppose that the inverse suspension functor
$\Sigma^{-1}: \cd_{fd}\Gamma \to \cd_{fd}\Gamma$ is reachable.
Then $(Q,W)$ is Jacobi-finite. If moreover conjecture~\ref{conj:KS} 
holds for $(Q,W)$ so that the refined 
DT-invariant $\E_{Q,W}$ is well-defined, then it is rational and 
$DT_{Q,W}$ equals $\zeta(\Sigma^{-1})$.
\end{theorem}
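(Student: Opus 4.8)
The plan is to treat the two assertions separately: first derive Jacobi-finiteness from the reachability hypothesis by importing input from silting theory, and then obtain \emph{both} the rationality of $\E_{Q,W}$ and the identity $DT_{Q,W}=\zeta(\Sigma^{-1})$ from a single telescoping of the wall-crossing formula~(\ref{eq:wall-crossing}) along a well-chosen composition of mutations. For the first assertion, reachability of $\Sigma^{-1}$ means that $S$ and $\Sigma^{-1}S$ are joined by a finite sequence of mutations inside $\ccl$; following Nagao~\cite{Nagao10} this finite passage from the canonical heart $\ca$ to its shift $\Sigma^{-1}\ca$ forces $\Gamma$ to lie in $\cd_{fd}\Gamma$, equivalently $H^0(\Gamma)=\cp(Q,W)$ to be finite-dimensional, i.e.~$(Q,W)$ Jacobi-finite. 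The same analysis shows that the reaching sequence may be taken to consist of positive mutations only (a \emph{maximal green sequence}): such a sequence is the wall-crossing sequence of a generic, hence discrete, stability function $Z$ on $\ca$ whose finitely many stable objects $M_1,\ldots,M_N$, in order of decreasing phase, are exactly the objects crossed at the successive steps.

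Rationality of $\E_{Q,W}$ is then immediate from Lemma~\ref{lemma:rational-DT-invariant}, since the stability function $Z$ just produced is discrete with finitely many stables; consequently $DT_{Q,W}=\Ad(\E_{Q,W})\circ\Sigma$ is a genuine automorphism of $\Frac(\A_Q)$. (Alternatively, rationality drops out of the identity below, as every intertwiner~(\ref{eq:FG-intertwiner}) is rational by the formulas used in the proof of Lemma~\ref{lemma:rational-DT-invariant}.) Moreover, by Theorem~\ref{thm:main-thm} the image $\FG(\alpha)$ of a morphism $\alpha\colon S\to\Sigma^{-1}S$ depends only on the target, so I am free to compute it along the green path above.

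The core is the telescoping. Writing $\alpha=\mu^+_{k_N}\circ\cdots\circ\mu^+_{k_1}$, contravariance of $\FG$ gives $\FG(\alpha)=I_1\circ\cdots\circ I_N$ with $I_i=\Ad(\E(y_{k_i}))\circ\phi_i$ the right intertwiner~(\ref{eq:FG-intertwiner}) of the $i$-th step. Since each $\phi_i$ is an algebra homomorphism, the relation $\phi\circ\Ad(\E(y))=\Ad(\E(\phi(y)))\circ\phi$ lets me push all the $\phi_i$ to the right, yielding
\[
\FG(\alpha)=\Ad\Big(\prod_{i=1}^N \E\big(\phi_1\cdots\phi_{i-1}(y_{k_i})\big)\Big)\circ(\phi_1\cdots\phi_N).
\]
Read intrinsically in the group algebra of $K_0(\cd_{fd}\Gamma)$, each $\phi_i$ is the identity and merely relabels the simples of successive hearts (cf.~(\ref{eq:phi-plus})), so $\phi_1\cdots\phi_{i-1}(y_{k_i})=y^{\dimv M_i}$ and the collected factor is $\prod_{i=1}^N\E(y^{\dimv M_i})=\E_{Q,W}$, the product over the stable objects of $Z$ in decreasing phase. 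This is exactly the stepwise assembling of the factorisations~(\ref{eq:fact-right-orig}) and~(\ref{eq:fact-right-mut}) of~(\ref{eq:wall-crossing}): each mutation peels the top factor $\E_{S_k}$ off, and the whole green sequence peels off all of $\E_{Q,W}$. The monomial part $\phi_1\cdots\phi_N$ is the identity of the ambient group algebra, so $\FG(\alpha)=\Ad(\E_{Q,W})$ there; the factor $\Sigma$ appears only upon unwinding the definition of $\zeta$, where the canonical identification $\FG(\Sigma^{-1}S)\iso\FG(S)$ relabels $\Sigma^{-1}S_i\mapsto S_i$, which on classes is multiplication by $-1$, i.e.~the map $\Sigma\colon y^\alpha\mapsto y^{-\alpha}$. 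Composing gives $\zeta(\Sigma^{-1})=\Ad(\E_{Q,W})\circ\Sigma=DT_{Q,W}$.

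The main obstacle is the very first step, the passage from mere reachability of $\Sigma^{-1}$ to Jacobi-finiteness together with the existence of a \emph{green} reaching sequence: this is the one point that genuinely uses silting/tilting theory beyond the formal machinery of this section, and I would import it from Nagao~\cite{Nagao10} and the silting-mutation theory underlying the setup of section~\ref{ss:comparison-of-categories}. Everything afterwards is bookkeeping: the telescoping is mechanical given the intertwiner identity~(\ref{eq:FG-intertwiner}) and Theorem~\ref{thm:main-thm}, the only care needed being that the decreasing-phase ordering of the green sequence matches the ordering of factors in the definition of $\E_{Q,W}$, and that the convention in the definition of $\zeta$ is the one producing $\Sigma$ rather than $\Sigma^{-1}$.
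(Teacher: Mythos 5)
The paper does not actually prove this theorem in situ: it attributes it to Nagao's ideas, illustrates it for $A_2$, and develops the real proof machinery only later, in sections~\ref{ss:nearby-cluster-collections}--\ref{ss:proof-main-thm} (nearby cluster collections, the signed products $\E_{\cu,\alpha}=\E(\eps_1\beta_1)^{\eps_1}\cdots\E(\eps_N\beta_N)^{\eps_N}$, Theorems~\ref{thm:E-U} and~\ref{thm:reconstruct-refined}), with details deferred to \cite{Keller11}. Your telescoping of the intertwiners and your bookkeeping of where the factor $\Sigma$ comes from are consistent with the paper's $A_2$ computation (\ref{eq:a2-dt1}) and are fine as far as they go. The genuine gap is the step on which everything else rests: the claim that reachability of $\Sigma^{-1}$ lets you choose a reaching sequence consisting of \emph{positive (green) mutations only}, realized as the wall-crossing sequence of a discrete stability function. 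Neither implication is justified, and the first is false in general: reachability of $\Sigma^{-1}$ only gives what the later literature calls a reddening sequence, in which some steps are ``red'' ($\eps_i=-1$, the mutated simple lying in $\Sigma^{-1}\ca$), and there are quivers admitting such sequences but no maximal green sequence. The paper itself treats the existence of a maximal green sequence as a strictly stronger hypothesis --- see the final Proposition of section~\ref{ss:tropical-groupoid}, which assumes a maximal green sequence in order to \emph{conclude} that $\Sigma^{-1}\ca$ is reachable, not the converse. Moreover, even when a maximal green sequence exists, it need not be induced by any central charge $Z$, so identifying the objects crossed with the stable objects of a discrete stability function in decreasing phase order is a second unjustified leap; it is exactly at this point that Conjecture~\ref{conj:KS} and the deep inputs (Derksen--Weyman--Zelevinsky, Plamondon, Berenstein--Zelevinsky) must enter, via $\E_\ca=\E_{\cu,\alpha}$ and Theorem~\ref{thm:reconstruct-refined}, rather than by direct inspection of a green path.

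Concretely, the correct route keeps your telescoping but replaces the all-green assumption by the nearby-cluster-collection formalism: an arbitrary morphism $\alpha:S\to\Sigma^{-1}S$ projects (Theorem~\ref{thm:nearby-projection}) to a sequence of mutations of nearby cluster collections with signs $\eps_i$ governed by Theorem~\ref{thm:plus-minus}, the collected factor is the signed product $\E_{\ca}=\E(\eps_1\beta_1)^{\eps_1}\cdots\E(\eps_N\beta_N)^{\eps_N}$ (well defined by Theorem~\ref{thm:E-U}), and this equals $\E_{Q,W}$ by Theorem~\ref{thm:reconstruct-refined}. Rationality then follows because $\Ad(\E_{Q,W})$ is a finite composition of maps $\Ad(\E(y^{\beta})^{\pm1})$, each rational by the computation in Lemma~\ref{lemma:rational-DT-invariant} --- your parenthetical ``alternative'' argument is in fact the one that survives, whereas the appeal to a discrete stability function with finitely many stables does not, since no such function has been produced.
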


For example, consider the quiver $Q: 1\to 2$. Then the
morphism $\alpha=\mu_2^+ \mu_1^+$ of (\ref{eq:ccg-m1}) shows
that $\Sigma^{-1}$ is reachable. Now we use the equality~(\ref{eq:a2-dt1})
and the fact that the composition $\phi_1\phi_2$
of $\phi_1$ with $\phi_2$ in (\ref{eq:a2-dt1}) equals $\Sigma$ to deduce that,
in accordance with the theorem, we have
\[
DT_{Q,W} = \Ad(\E(y_1)\E(y_2)) \circ \Sigma = \FG(\alpha) = \zeta(\Sigma^{-1}).
\]

\subsection{The groupoid of cluster tilting sequences} In view of
Theorem~\ref{thm:main-thm}, it is natural to try and `factor out' the braid
group action on the category $\cd_{fd}\Gamma$. This is, in a
certain sense, what is achieved by the passage to the cluster
category. For simplicity, let us assume that $(Q,W)$ is Jacobi-finite
(the general case can be treated using Plamondon's
results \cite{Plamondon10a} \cite{Plamondon10b}).
The cluster category $\cc_{Q,W}$ is defined \cite{Amiot09} as the triangle quotient
$\per(\Gamma)/\cd_{fd}(\Gamma)$. It is a triangulated category with
finite-dimensional morphism spaces which is $2$-Calabi-Yau and
admits the image $T$ of $\Gamma$ as a {\em cluster-tilting object},
\ie we have $\Ext^1(T,T)=0$ and any object $X$ such that
$\Ext^1(T,X)=0$ belongs to the category $\add(T)$ of direct summands
of finite direct sums of copies of $T$. For a cluster tilting
object $T'$ the quiver $Q_{T'}$ is the quiver of the endomorphism
algebra of $T'$. A {\em cluster tilting sequence}
is a sequence $(T'_1, \ldots, T'_n)$ of pairwise non isomorphic
indecomposables of $\cc_{Q,W}$ whose direct sum is a cluster
tilting object $T'$ whose associated quiver $Q_{T'}$ does not
have loops or $2$-cycles. There is a canonical mutation operation on
all cluster tilting objects, \cf \cite{IyamaYoshino08}. It yields
a partially defined mutation operation on the cluster-tilting sequences.
The {\em groupoid of cluster-tilting sequences $\clt=\clt_Q$}
has as objects the cluster-tilting sequences of $\cc_{Q,W}$ which
are reachable from the image $T=(T_1, \ldots, T_n)$ of
the sequence of the dg modules $e_i \Gamma$, $i\in Q_0$.
Its morphisms are defined as formal compositions
of mutations and permutations as for the groupoid
of cluster collections. The following theorem proved in \cite{KellerNicolas11}
yields a link between the groupoids $\clt$ and $\ccl$.

\begin{theorem}[Keller-Nicol\'as \cite{KellerNicolas11}]
\label{thm:Keller-Nicolas}
There is a canonical bijection from the set of
$\Braid(Q)$-orbits of cluster collections in $\cd_{fd}\Gamma$
to the set of cluster tilting sequences in $\cc_{Q,W}$. It
is compatible with mutations and permutations and preserves
the quivers.
\end{theorem}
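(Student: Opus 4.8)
The plan is to factor the desired bijection through the category of silting objects of $\per(\Gamma)$, exploiting the recollement relating $\cd_{fd}\Gamma$, $\per(\Gamma)$ and $\cc_{Q,W}=\per(\Gamma)/\cd_{fd}\Gamma$. On the derived side, recall from the results quoted just before the statement \cite{RickardRouquier10}\cite{KellerNicolas10} that a cluster collection $S'=(S'_1,\dots,S'_n)$ is exactly a simple-minded collection: its closure under extensions is the heart $\ca'$ of a bounded non-degenerate $t$-structure on $\cd_{fd}\Gamma$ whose simples are the $S'_i$. By the standard Koenig-Yang dictionary between bounded $t$-structures with length heart and silting objects, such a heart determines a silting object $M'=\bigoplus M'_i$ of $\per(\Gamma)$, singled out by the duality $\Hom(M'_i,\Sigma^p S'_j)=\delta_{p,0}\,\delta_{i,j}\,k$. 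First I would record that $S'\mapsto M'$ is a bijection between reachable cluster collections and reachable silting objects which is $\Braid(Q)$-equivariant (the twists $\tw_{S_k}$ act on both sides), and which, reading off the defining triangles of $\mu_k^{\pm}$, carries a mutation of cluster collections to a silting mutation and permutations to permutations. The base point is $S\mapsto\Gamma$, since the heart of the silting object $\Gamma$ is $\ca\cong\nil\cp(Q,W)$ with simples $S_i$.

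Next I would pass to the cluster category. Assuming $(Q,W)$ Jacobi-finite, so that $\cc_{Q,W}$ is $\Hom$-finite, $2$-Calabi-Yau, and carries the cluster-tilting object $T=\pi(\Gamma)$ \cite{Amiot09}, the quotient functor $\pi$ sends every silting object $M'$ to a cluster-tilting object $\pi(M')=\bigoplus \pi(M'_i)$ and silting mutation to the Iyama-Yoshino mutation \cite{IyamaYoshino08}. Composing with the first step gives a candidate map $S'\mapsto(\pi(M'_1),\dots,\pi(M'_n))$. Here I would check that the target is genuinely a cluster-tilting sequence and that quivers are preserved: the quiver $Q_{S'}$, with arrows $i\to j$ counting $\dim\Ext^1(S'_j,S'_i)$, has no loops or $2$-cycles by the definition of a cluster collection and by Derksen-Weyman-Zelevinsky mutation \cite{DerksenWeymanZelevinsky08}, and a direct comparison of the $\Ext^1$-quiver of the simples with the Gabriel quiver of $\End_{\cc}(\pi(M'))$ identifies $Q_{S'}=Q_{\pi(M')}$.

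The mechanism that collapses the braid action is that each twist $\tw_{S_k}$ becomes isomorphic to the identity on $\cc_{Q,W}$: in the defining triangle $\RHom(S_k,X)\otimes_k S_k\to X\to\tw_{S_k}(X)\to\Sigma\,\RHom(S_k,X)\otimes_k S_k$ the outer terms lie in $\cd_{fd}\Gamma$ and hence vanish in the quotient, so $\pi\circ\tw_{S_k}\cong\pi$. Therefore the candidate map is constant on $\Braid(Q)$-orbits and descends to a well-defined map from $\Braid(Q)$-orbits of cluster collections to cluster-tilting sequences. This also explains concretely where the orbits come from: a single Iyama-Yoshino mutation in $\clt$ has two lifts in $\ccl$, namely $\mu_k^+$ and $\mu_k^-$, and these differ precisely by a twist, since $\mu_k^+(\ca)=\tw_{S_k}(\mu_k^-(\ca))$. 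Surjectivity onto the reachable cluster-tilting sequences then follows by lifting a mutation path from $T$ step by step (choosing, say, positive mutations) to a reachable silting object mapping to the given sequence.

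The main obstacle is injectivity: if two reachable cluster collections $S'$, $S''$ yield the same cluster-tilting sequence, they must lie in one $\Braid(Q)$-orbit. Equivalently, the fibre of $\pi$ over a cluster-tilting object, restricted to reachable silting objects, must be a single braid orbit. The inclusion ``braid orbit $\subseteq$ fibre'' is the easy half just established; the reverse inclusion is the delicate point, where the $3$-Calabi-Yau property, the spherical-ness of the $S_i$, and the precise shape of the recollement are genuinely used to show that two reachable silting objects with the same image in $\cc_{Q,W}$ differ by a composite of the twists $\tw_{S_k}$. This is what is proved in \cite{KellerNicolas11} (and in the general, not necessarily Jacobi-finite, case one replaces $\per(\Gamma)$ and $\cc_{Q,W}$ by Plamondon's framework \cite{Plamondon10b}). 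Granting it, the induced map on orbits is bijective, and since it was assembled entirely from the $t$-structure/silting/cluster-tilting dictionaries it is automatically compatible with mutations and permutations and preserves quivers, which is the assertion.
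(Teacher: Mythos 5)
Your proposal follows essentially the same route as the paper's own sketch: both factor the bijection through silting sequences/objects in $\per(\Gamma)$, using the orthogonality $\Hom(P'_i,\Sigma^p S'_j)=\delta_{p,0}\delta_{i,j}k$ to match silting sequences with cluster collections on the $\cd_{fd}\Gamma$ side and the quotient functor to $\cc_{Q,W}$ together with Amiot's Theorem~2.1 on the other, deferring the genuinely hard injectivity statement to \cite{KellerNicolas11} exactly as the survey does. Your added observations (that $\pi\circ\tw_{S_k}\cong\pi$ since the cone terms lie in $\cd_{fd}\Gamma$, and that the two lifts $\mu_k^{\pm}$ of a cluster-tilting mutation differ by a twist) are correct and merely make explicit what the paper leaves implicit.
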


The bijection is based on the exact sequence of triangulated
categories
\[
\xymatrix{0 \ar[r] & \cd_{fd}\Gamma \ar[r] & \per(\Gamma) \ar[r] & \cc_{Q,W} \ar[r] & 0}
\]
Namely, we define a {\em silting sequence} to be a sequence
$(P'_1, \ldots, P'_n)$ of objects in $\per(\Gamma)$ such that
\begin{itemize}
\item[a)] $\Hom(P'_i, \Sigma^p P'_j)$ vanishes for all $p>0$,
\item[b)] the $P'_i$ generate $\per(\Gamma)$ as a triangulated category and
\item[c)] the quiver of the subcategory whose objects are the $P_i'$
does not have loops nor $2$-cycles.
\end{itemize}
For such a sequence, the subcategory $\ca'$ of $\cd_{fd}\Gamma$ formed
by the objects $X$ such that $\Hom(P'_i, \Sigma^p X)=0$ for all $i$ and
all $p\neq 0$ is the heart $\ca'$ of a non degenerate bounded t-structure
whose simples form a cluster collection. The map from silting
sequences to cluster collections thus defined is a bijection.
We obtain the bijection of the theorem by composing its
inverse with the map taking a silting sequence to its
image in the cluster category, which one shows to be
a cluster tilting sequence using Theorem~2.1 of
\cite{Amiot09}.

Thanks to the theorem, we can define a functor
\[
\FG: \clt^{op} \to \sf
\]
by sending a cluster-tilting sequence $T'$ to the image under
$\FG: \ccl^{op} \to \sf$ of the corresponding cluster collection $S'$.

\subsection{Action of autoequivalences of the cluster category} \label{ss:action-autoequivalences-cluster-cat}
In analogy with section~\ref{ss:action-autoequivalences-der-cat},
if $(Q,W)$ is Jacobi-finite, one obtains a homomorphism, still denoted
by $\zeta$, from the group
of reachable autoequivalences of the cluster category $\cc_{Q,W}$
to the automorphism group of the induced functor
\[
\FG : \clt_Q \to \sf.
\]
Again, if the suspension functor $\Sigma^{-1}$ of the cluster category
is reachable, we find that $\zeta(\Sigma^{-1})$ equals $DT_{Q,W}$.
In particular, if $\Sigma$
is of finite order $N$ as an autoequivalence of the cluster category,
then the automorphism $DT_{Q,W}$ of $\Frac(\A_Q)$ is of
finite order dividing $N$.
Applications of these ideas include the (quantum version of
the) periodicity theorem for the $Y$-systems (and $T$-systems)
associated with pairs of simply laced
Dynkin diagrams \cite{Keller08c} \cite{Keller10b} \cite{Keller10a}. Indeed,
let $\vec{\Delta}$ and $\vec{\Delta}'$ be alternating orientations
of simply laced Dynkin diagrams, $Q$ the triangle product
$\vec{\Delta}\boxten \vec{\Delta}'$ and $W$ the canonical
potential on $Q$ defined in Proposition~5.12 of \cite{Keller10a}.
Then the cluster category $\cc_{Q,W}$ is equivalent to the
cluster category $\cc_{A\ten_k A'}$ associated \cite{Amiot09} with the
tensor product of the path algebras $A=k\vec{\Delta}$ and
$A'=\vec{\Delta}'$, by Proposition~5.12 of [loc. cit.]. 
Let $\mu_\boxten$ be the sequence of mutations of $Q$ defined
in (3.6.1) of [loc. cit.]. Then by section~7.4 of [loc. cit.],  
the Zamolodchikov autoequivalence
\begin{equation}
\Za = \tau^{-1} \ten \id : \cc_{A\ten_k A'} \to \cc_{A\ten_k A'}
\end{equation}
is reachable and its image under $\zeta$ is $\mu_\boxten$. 
Moreover, if $h$ and $h'$ are the Coxeter numbers of $\Delta$
and $\Delta'$, then we have
\begin{equation} \label{eq:Za1}
\Za^h = \tau^{-h} \ten \id = \Sigma^2 \ten \id = \Sigma^2.
\end{equation}
 Since we also have (\cf the proof of Theorem~8.4 in [loc. cit.])
\begin{equation}
\Za = \id \ten \tau \ko
\end{equation}
we find that
\begin{equation} \label{eq:Za2}
\Za^{h'} = \id \ten \tau^{h'} = \id \ten \Sigma^{-2} = \Sigma^{-2}.
\end{equation}
Notice that this implies in particular that $\Sigma^{-2}$ is reachable
and yields a sequence of mutations whose composition is the
square of the non commutative DT-invariant $DT_{Q,W}$. 
Equation~(\ref{eq:Za2}) confirms equation~(8.19)
of \cite{CecottiNeitzkeVafa10}.
In fact, it is not hard to show that $\Sigma^{-1}$ is also reachable.
By combining equations (\ref{eq:Za1}) and (\ref{eq:Za2}) we
obtain $\Za^{h+h'}=\id$ and thus $\mu_{\boxten}^{h+h'}=\id$.
By applying the functor $\FG$ to this last equation, we
obtain a statement equivalent to the quantum version
of the periodicity for the $Y$-system associated with $(\Delta,\Delta')$.
Notice that in the course of this reasoning, we have also found that
\[
\Sigma^{-2} = \Za^{h'} = \Za^{-h}.
\]
This implies that the non commutative DT-invariant $DT_{Q,W}$
is of order dividing 
\[
2 \frac{h+h'}{\gcd(h,h')} \ko
\]
a fact already present in section~8.3.2 of \cite{CecottiNeitzkeVafa10}.

\subsection{Nearby cluster collections} \label{ss:nearby-cluster-collections}
Let $S'$ be a cluster collection and $\ca'$ the associated heart. We define
$\ca'$ to be a {\em nearby heart} and $S'$ to be a {\em nearby cluster
collection} if there is a torsion pair $(\cu,\cv)$ in $\ca$
(\cf section~\ref{ss:Reineke-s-identities}) such that
$\ca'$ is the full subcategory formed by the objects $X$ of
$\cd_{fd}\Gamma$ such that the object $H^0(X)$ lies in $\cv$, the object
$H^1(X)$ in $\cu$ and $H^p(X)$ vanishes for all $p\neq 0,1$.

\begin{theorem}[Nagao \cite{Nagao10}] \label{thm:plus-minus}
\begin{itemize}
\item[a)] Let $\ca$ be the initial heart and $\ca'$
a nearby heart. Then each simple $S_k'$ of $\ca'$ either
lies in $\ca$ or in $\Sigma^{-1}\ca$.
\item[b)] Each nearby cluster collection $S'$ is determined
by the classes $[S_i']$ of its objects in $K_0(\cd_{fd}\Gamma)$.
\end{itemize}
\end{theorem}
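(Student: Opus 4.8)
I would argue directly from the $t$-structure formalism. Write $H^0$ and $H^1$ for the cohomology functors of the \emph{initial} $t$-structure, the one with heart $\ca$. By the definition of a nearby heart, $\ca'$ is the tilt of $\ca$ at a torsion pair $(\cu,\cv)$, so every object $X$ of $\ca'$ (having cohomology only in degrees $0$ and $1$) sits in the truncation triangle
\[
H^0(X) \to X \to \Sigma^{-1}H^1(X) \to \Sigma H^0(X)
\]
with $H^0(X)\in\cv$ and $H^1(X)\in\cu$. The key observation is that both outer vertices already lie in $\ca'$: an object of $\cv$ has $H^1=0$, so $\cv\subseteq\ca'$, and an object $\Sigma^{-1}\sigma$ with $\sigma\in\cu$ has $H^0=0$ and $H^1=\sigma\in\cu$, so $\Sigma^{-1}\cu\subseteq\ca'$. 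Thus all three vertices lie in the heart $\ca'$, and the triangle becomes a short exact sequence
\[
0\to H^0(X)\to X\to \Sigma^{-1}H^1(X)\to 0
\]
in $\ca'$. Taking $X=S_k'$ and using simplicity in $\ca'$, the subobject $H^0(S_k')$ is forced to be $0$ or $S_k'$: in the first case $S_k'=\Sigma^{-1}H^1(S_k')\in\Sigma^{-1}\cu\subseteq\Sigma^{-1}\ca$, and in the second $\Sigma^{-1}H^1(S_k')=0$ and $S_k'\in\cv\subseteq\ca$. This would settle a), the only external input being that an HRS tilt of a heart is again a heart.

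\textbf{Part b), reduction.} With a) in hand, I would first read off the positions. Under $K_0(\cd_{fd}\Gamma)\cong K_0(\ca)$, the class $[S_i']$ is a nonnegative combination of the classes of the simples of $\ca$ when $S_i'\in\ca$, and a nonpositive one when $S_i'\in\Sigma^{-1}\ca$; since a simple has nonzero class, the sign is unambiguous, and the numerical data alone determine the partition $I=I_+\sqcup I_-$. Moreover the $[S_i']$ form a basis of $K_0(\cd_{fd}\Gamma)$, hence are pairwise distinct, so once the heart $\ca'$ is known the labelling $i\mapsto S_i'$ is forced by matching classes. Thus it suffices to show that the torsion pair $(\cu,\cv)$ — equivalently the heart $\ca'$ — is determined by $([S_1'],\ldots,[S_n'])$. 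My plan is to recover $\cu$ as the smallest torsion class of $\ca$ containing the objects $\sigma_i$ given by $S_i'=\Sigma^{-1}\sigma_i$ for $i\in I_-$ (with $\cv=\cu^\perp$), and then to pin down each $\sigma_i$, and each positive simple $S_j'$ for $j\in I_+$, from its class.

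\textbf{The main obstacle.} The hard part will be exactly this last step: passing from the \emph{classes} back to the \emph{objects}. Here the cluster-collection hypotheses are indispensable. The Calabi--Yau property forces each $S_i'$ to be a rigid brick, i.e.\ $\End(S_i')=k$ and $\Ext^1(S_i',S_i')=0$, since $\Ext^{>0}(S_i',S_i')$ is concentrated in degree $3$; and for $i\neq j$ the graded space $\Ext^*(S_i',S_j')$ lives in degrees $1$ and $2$ only. I would use this rigidity, together with the finite length of $\ca$ and the skew-symmetric form $\lambda$, to compare two nearby cluster collections $S'$, $S''$ with $[S_i']=[S_i'']$ for all $i$: match the negative simples first, arguing that a rigid brick $\sigma$ of $\ca$ for which $\Sigma^{-1}\sigma$ is a simple of a nearby cluster heart is determined by $[\sigma]$, then deduce $\cu'=\cu''$, hence $\cv'=\cv''$ and $\ca'=\ca''$. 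Proving this numerical rigidity — that the sphericality and $\Ext$-concentration conditions, under the torsion-pair constraint, leave no freedom once the class is fixed — is the delicate point, and is where Nagao's analysis (through the comparison with $c$-vectors and their sign-coherence, already foreshadowed by part a)) carries the genuine content; the remaining steps are the formal bookkeeping indicated above.
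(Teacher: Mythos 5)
Your argument for part a) is correct and complete: since the paper's definition of a nearby heart already builds in that $\ca'$ is the tilt of $\ca$ at a torsion pair $(\cu,\cv)$, the truncation triangle of a simple $S_k'$ of $\ca'$ is a short exact sequence in $\ca'$ with outer terms in $\cv\subseteq\ca$ and $\Sigma^{-1}\cu\subseteq\Sigma^{-1}\ca$, and simplicity kills one of the two ends. This is in fact more self-contained than the paper's own treatment of a), which offers no argument at all but merely records that a) is equivalent to sign-coherence of the classes $[P_i']$ and points to \cite{Nagao10}, \cite{Plamondon10a}, \cite{DerksenWeymanZelevinsky10}. There is no tension between the ease of your argument and the depth of sign-coherence: the hard content is that hearts reached by arbitrary mutation sequences are, up to the braid action, nearby in this tilting sense (Theorem~\ref{thm:nearby-projection}), not part a) itself.

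For part b) there is a genuine gap, and you name it yourself: the claim that a rigid brick of $\ca$ occurring as a (de)suspended simple of a nearby cluster heart is determined by its class is exactly the assertion to be proved, and your proposal reduces the theorem to it without supplying an argument. The surrounding bookkeeping (reading off $I=I_+\sqcup I_-$ from the signs, generating $\cu$ from the negative simples) is plausible but is not where the difficulty dissolves; note also that it is not immediate that $\cu$ is the torsion class generated by the $\sigma_i$, since a priori $\Sigma^{-1}U$ for $U\in\cu$ could have composition factors in $\cv$. The route actually used in this circle of ideas does not attack the simples directly: one passes to the silting sequence $(P_1',\ldots,P_n')$ attached to $S'$, observes that the Euler pairing between $K_0(\per\Gamma)$ and $K_0(\cd_{fd}\Gamma)$ makes $([P_i'])$ the basis dual to $([S_j'])$ --- so the classes of the $S_j'$ determine the classes of the $P_i'$ --- and then invokes the theorem that a reachable silting (equivalently, cluster-tilting) object is determined by its class, i.e.\ by its $g$-vectors, as in \cite{Plamondon10a} and \cite{Nagao10}. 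The paper can call b) ``an easy consequence'' only because it takes this determinacy statement as known; your sketch would need either to import it explicitly or to actually prove the numerical rigidity you postulate.
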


Part a) of the theorem is equivalent to the `sign-coherence' of
the classes $[P_i'] \in K_0(\per\Gamma)$, where the $P_i'$ form
the silting sequence associated with $S'$. The `sign-coherence'
is also proved in \cite{Plamondon10a} and, in another language,
in \cite{DerksenWeymanZelevinsky10}. Part b) is an easy
consequence.

The following theorem goes back to the insight of
Nagao \cite{Nagao10}. In this form,
it follows \cite{Keller11} from the proof of Theorem~2.18 in
\cite{Plamondon10a} and the generalization of
theorem~\ref{thm:Keller-Nicolas} to quivers with potential
which are not necessarily Jacobi-finite, \cf \cite{KellerNicolas11}.

\begin{theorem} \label{thm:nearby-projection}
\begin{itemize}
\item[a)] Each cluster collection $S'$ belongs to the $\Braid(Q)$-orbit of a unique
nearby cluster collection $\rho(S')$.
\item[b)]  If $S'$ and $S''$ are cluster collections related by a mutation,
then $\rho(S')$ and $\rho(S'')$ are related by a mutation. More precisely,
if $S''=\mu^{\pm}_k(S')$ for some sign $\pm$ and some $1\leq k\leq n$,
then $\rho(S'') = \mu_k^\eps(\rho(S'))$, where $\eps=+1$ if the object
$S'_k$ of $S'$ lies in $\ca$ and $\eps=-1$ if $S'_k$ lies in $\Sigma^{-1}\ca$.
\end{itemize}
\end{theorem}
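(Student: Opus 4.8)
The plan is to transport the statement to the cluster category by means of Theorem~\ref{thm:Keller-Nicolas} and then to single out, inside each $\Braid(Q)$-orbit, the distinguished nearby representative using the sign-coherence recorded in Theorem~\ref{thm:plus-minus}. Recall from the construction preceding Theorem~\ref{thm:Keller-Nicolas} that a cluster collection $S'$ is the collection of simples of the heart cut out by a silting sequence $(P'_1,\ldots,P'_n)$ of $\per(\Gamma)$, namely the objects $X$ of $\cd_{fd}\Gamma$ with $\Hom(P'_i,\Sigma^p X)=0$ for all $i$ and all $p\neq 0$, and that sending such a silting sequence to its image in $\cc_{Q,W}$ yields a bijection between $\Braid(Q)$-orbits of cluster collections and cluster-tilting sequences. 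Thus part a) amounts to producing, in each orbit, exactly one collection whose heart is a tilt of $\ca$ at a torsion pair.

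For existence I would start from the cluster-tilting sequence $T'$ attached to the orbit of $S'$ and reconstruct a nearby representative from its index. Following the proof of Theorem~2.18 of \cite{Plamondon10a}, in the form extended to not necessarily Jacobi-finite $(Q,W)$ via \cite{KellerNicolas11}, the reachable cluster-tilting object $T'$ is classified by its index, an element of $K_0(\per\Gamma)$, and admits a canonical silting lift $(P'_i)$ in $\per(\Gamma)$ whose classes $[P'_i]$ (the $g$-vectors) are sign-coherent; this sign-coherence is the content of \cite{DerksenWeymanZelevinsky10} and \cite{Nagao10}. By the equivalence between part a) of Theorem~\ref{thm:plus-minus} and sign-coherence of the $[P'_i]$, each simple of the heart $\ca'$ cut out by $(P'_i)$ lies in $\ca$ or in $\Sigma^{-1}\ca$; sorting these simples by cohomological degree realises $\ca'$ as the tilt of $\ca$ at a torsion pair $(\cu,\cv)$, so $\ca'$ is nearby and lies in the orbit of $S'$.

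Uniqueness in a) then follows because a nearby collection is recovered from its orbit: by Theorem~\ref{thm:plus-minus}~b) it is determined by its classes $[S'_i]$ in $K_0(\cd_{fd}\Gamma)$, and these $c$-vectors are dual, under the Euler pairing between $\per\Gamma$ and $\cd_{fd}\Gamma$, to the $g$-vectors of the silting lift, hence fixed by the orbit. For part b) the existence of a single mutation relating $\rho(S')$ and $\rho(S'')$ is automatic from the mutation-compatibility in Theorem~\ref{thm:Keller-Nicolas}, so only the sign $\eps$ has to be pinned down. Comparing with the two realisations $\mu_k^\pm(\ca)$ of heart mutation in section~\ref{ss:comparison-of-categories}, the mutation is of positive type exactly when the simple being mutated sits in $\ca$, where $S_k$ is replaced by $\Sigma^{-1}S_k$, and of negative type when it sits in $\Sigma^{-1}\ca$; this is the asserted rule $\eps=\pm 1$.

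The main obstacle is the existence step. The whole argument rests on the sign-coherence of the $g$-vectors of a reachable silting lift, equivalently on part a) of Theorem~\ref{thm:plus-minus}, together with the classification of reachable cluster-tilting objects by their index. These are the deep inputs of \cite{Plamondon10a}, \cite{DerksenWeymanZelevinsky10} and \cite{Nagao10}; granting them, the reduction through Theorem~\ref{thm:Keller-Nicolas} and the sign-bookkeeping of b) are formal.
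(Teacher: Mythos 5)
Your argument follows the same route the paper indicates: its entire ``proof'' of Theorem~\ref{thm:nearby-projection} is the remark that the statement follows from the proof of Theorem~2.18 of \cite{Plamondon10a} together with the generalization of Theorem~\ref{thm:Keller-Nicolas} to the non-Jacobi-finite case, and these --- plus the sign-coherence recorded around Theorem~\ref{thm:plus-minus} --- are exactly the ingredients you assemble. Your write-up in fact supplies more connective tissue than the paper does, and the points you leave to the cited sources (sign-coherence of the $g$-vectors, the classification of reachable cluster-tilting objects by their index, and the fact that the correctly signed mutation of a nearby heart is again nearby) are precisely the ones the paper also defers.
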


Clearly, a cluster collection is reachable iff each cluster collection
in its $\Braid(Q)$-orbit is reachable. Thus, the reachable nearby cluster
collections form a system of representatives for the $\Braid(Q)$-orbits
in $\ccl$. Let $\ncc$ denote the full subgroupoid of $\ccl$ formed
by the reachable nearby cluster collections. Then the projection restricts to
a functor
\[
\xymatrix{\ncc \ar@{->>}[r] & \ccl/\Braid(Q) }
\]
which is full and yields a bijection between the sets of objects.
The map $S'\mapsto \rho(S')$ yields a (non functorial!) section.
We have the following diagram of groupoids and functors
\[
\xymatrix{
\ncc \ar@{->>}[drr]\ar@{->}[rr] &  & \ccl \ar[d] \ar[dr] & \\
                 &  & \ccl/\Braid(Q) \ar[r] & \sf^{op}.
}
\]
The resulting functor $\ncc^{op} \to \sf$ can be refined
so as to yield {\em identities between products
of series in $\hat{\A}_Q$} (rather than just in the automorphism
group of $\Frac(\A_Q)$): Let $\cu\subset\ca$
be a torsion subcategory associated with a reachable
nearby cluster heart $\ca'$ (so that an object $X$ of
$\cd_{fd}\Gamma$ belongs to $\ca'$ iff the object $H^1(X)$ belongs
to $\cu$, the object $H^0(X)$ belongs to $\cu^\perp$ and
$H^p(X)$ vanishes for all $p\neq 0$). Let $\alpha: S \to S'$
be a morphism of $\ncc$, where $S$ is the initial cluster
collection and $S'$ a cluster collection whose associated
heart is $\ca'$. After permuting the elements of $S'$ we may
assume that no permutations occur in $\alpha$ so that
$\alpha$ is a composition of mutations of nearby cluster
collections
\[
\xymatrix{
S=S^{(0)} \ar[r]^-{\mu_{k_1}} & S^{(1)} \ar[r]^{\mu_{k_2}} & \cdots \ar[r]^{\mu_{k_N}} & S^{(N)}.
}
\]
For $1\leq i\leq N$, let $\beta_i$ be the class $k_i$-th object of
$S^{(i)}$ in $K_0(\cd_{fd}\Gamma)$. By theorem~\ref{thm:plus-minus},
either $\beta_i$ belongs to the positive cone determined by $\ca$
or to its opposite. We put $\eps_i=+1$ in the first case and
$\eps_i=-1$ in the second. Then $\eps_i \beta_i$ is a positive
integer linear combination of the classes $[S_j]$. We write
$\E(\eps_i\beta_i)$ for $\E(y^v)$, where $v\in \N^n$ is
the vector of the coefficients of the decomposition of
$\eps_i \beta_i$ in the basis given by the $[S_j]$. Define
the invertible element $E_{\cu,\alpha}$ of $\A_Q$ by
\[
\E_{\cu,\alpha} = \E(\eps_1 \beta_1)^{\eps_1} \E(\eps_2 \beta_2)^{\eps_2} \cdots \E(\eps_N \beta_N)^{\eps_N}.
\]
\begin{theorem}[\cite{Keller11}] \label{thm:E-U}
The element $\E_{\cu,\alpha}$ does not depend
on the choice of $\alpha$.
\end{theorem}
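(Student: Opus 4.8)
The plan is to deduce Theorem~\ref{thm:E-U} from Theorem~\ref{thm:main-thm} by upgrading the equality of skew-field automorphisms provided by $\FG$ to an equality of the series $\E_{\cu,\alpha}$ themselves. First I would make explicit the relation between the product $\E_{\cu,\alpha}$ and the functor $\FG$. Writing the morphism $\alpha\colon S\to S'$ of $\ncc$ as a composition of nearby mutations $S=S^{(0)}\to\cdots\to S^{(N)}=S'$, each step contributes to $\FG(\alpha)$ one intertwiner $\Ad(\E(y_{k_i}))\circ\phi_\pm$ of the shape~(\ref{eq:intertwiner+}). Moving all the monomial maps $\phi_\pm$ to the right through the adjoint actions via $\phi\,\Ad(b)\,\phi^{-1}=\Ad(\phi(b))$, and using that the class $\beta_i$ of the $k_i$-th object of $S^{(i)}$ is, by Theorem~\ref{thm:plus-minus}, the vector $\eps_i$ times a positive combination of the $[S_j]$, one rewrites
\[
\FG(\alpha) = \Ad(\E_{\cu,\alpha})\circ\psi_\alpha,
\]
where $\psi_\alpha$ is the single monomial transformation obtained as the total composite of the $\phi_\pm$. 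The identification $K_0(\ca)=K_0(\cd_{fd}\Gamma)=K_0(\ca')$ is exactly what lets all factors be read off in the fixed quantum torus $\hat{\A}_Q$; this step is bookkeeping with the form $\lambda$ and formula~(\ref{eq:intertwiner+}).

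Next I would observe that $\psi_\alpha$ depends only on the target. Because the sign-coherence of Theorem~\ref{thm:plus-minus}(a) means that along a nearby path the monomial mutations are genuinely linear (not merely piecewise linear) on $K_0(\cd_{fd}\Gamma)$, their composite $\psi_\alpha$ is the linear change of coordinates sending the basis $[S'_i]$ to the basis $[S_j]$. By Theorem~\ref{thm:plus-minus}(b) the nearby collection $S'$ is determined by the classes $[S'_i]$, so $\psi_\alpha=\psi_{S'}$ is the same for every morphism $\alpha\colon S\to S'$ of $\ncc$. Combining this with Theorem~\ref{thm:main-thm}, which makes $\FG(\alpha)$ depend only on the $\Braid(Q)$-orbit of $S'$ and hence constant on all $\alpha$ with fixed nearby target $S'$, I would conclude that
\[
\Ad(\E_{\cu,\alpha}) = \FG(\alpha)\circ\psi_{S'}^{-1}
\]
is independent of $\alpha$.

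The last and hardest step is to pass from invariance of the \emph{adjoint action} to invariance of the \emph{element}. The obstruction is the kernel of $\Ad$: two series in $\hat{\A}_Q$ with the same conjugation action differ by a central element, and when $\lambda$ is degenerate the centre of $\hat{\A}_Q$ strictly contains the scalars (it contains $y^\alpha$ for every $\alpha$ in the radical of $\lambda$). To remove this I would pass to principal coefficients, replacing $(Q,W)$ by its framed version, for which the antisymmetrized form is non-degenerate and the centre of the quantum torus reduces to $\Q(q^{1/2})$. There the two products agree up to a scalar, and comparing constant terms — each factor $\E(y^v)^{\pm 1}$, and hence each $\E_{\cu,\alpha}$, has constant term $1$ — forces the scalar to be $1$; specializing the framing variables back yields the identity for $(Q,W)$. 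The real work is to check that the nearby structure, the signs $\eps_i$ and the classes $\beta_i$ are compatible with framing, equivalently to verify that $\FG$ admits an element-level refinement recording the conjugating series in $\hat{\A}_Q$ and not merely its adjoint action.
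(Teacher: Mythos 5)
The paper does not actually prove Theorem~\ref{thm:E-U}: it defers to \cite{Keller11} and only lists the ingredients (Nagao, Plamondon, Derksen--Weyman--Zelevinsky, Berenstein--Zelevinsky), so there is no line-by-line comparison to make. That said, your strategy is the intended one: factor $\FG(\alpha)=\Ad(\E_{\cu,\alpha})\circ\psi_\alpha$ with $\psi_\alpha$ monomial, use Theorem~\ref{thm:main-thm} (whose own proof already runs through the double torus) to get invariance of the adjoint action, and then kill the kernel of $\Ad$ by passing to principal coefficients, where the antisymmetrized form is unimodular and series with constant term $1$ are determined by their conjugation action. Your identification of $\psi_\alpha$ as the base-change matrix of the classes $[S_i']$ is correct (and note that citing Theorem~\ref{thm:plus-minus}(b) here is backwards: what you need is only that this matrix manifestly depends on $S'$ alone, not that it determines $S'$); alternatively $\psi_\alpha$ can be read off as the leading term of $\FG(\alpha)$, which makes its independence of $\alpha$ immediate from Theorem~\ref{thm:main-thm}.

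Two points in your last step deserve correction. First, ``specializing the framing variables back to $1$'' is not an algebra homomorphism of the framed quantum torus, since the frozen variables $y_{j'}$ do not commute with the $y_i$; the correct statement is that every factor $\E(\eps_i\beta_i)^{\pm1}$ already lies in the sub-quantum-torus of $\hat{\A}_{\tilde{Q}}$ generated by the non-frozen variables, which is isomorphic to $\hat{\A}_Q$ because the restriction of the extended form to that sublattice is $\lambda$; the identity proved upstairs therefore \emph{is} the identity in $\hat{\A}_Q$, with no specialization needed. Second, the compatibility of the signs $\eps_i$ and classes $\beta_i$ with the framing, which you leave as ``the real work,'' is exactly what section~\ref{ss:tropical-groupoid} packages: by Corollary~\ref{cor:tropical-groupoid} the vectors $\eps_i\beta_i$ are the $c$-vectors read off from the arrows to the frozen vertices of the mutated framed quiver, and their sign-coherence is Theorem~\ref{thm:plus-minus}(a). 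So one should run the whole argument in $\hat{\A}_{\tilde{Q}}$ from the outset (using the principal-coefficient version of Theorem~\ref{thm:main-thm}, which is what the proof sketch in section~\ref{ss:proof-main-thm} produces anyway) rather than prove it for $Q$ and lift. With these adjustments your outline is a faithful account of the argument the paper points to.
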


The proof of the theorem is independent of conjecture~\ref{conj:KS}.
It is based on the work of Nagao \cite{Nagao10},
Plamondon \cite{Plamondon10a} \cite{Plamondon10b},
Derksen-Weyman-Zelevinsky \cite{DerksenWeymanZelevinsky08}
\cite{DerksenWeymanZelevinsky10}, Berenstein-Zelevinsky
\cite{BerensteinZelevinsky05}, \ldots.
We put $\E_\cu=\E_{\cu,\alpha}$ for any $\alpha$. Notice that
if the cluster heart $\Sigma^{-1}\ca$ is reachable, the
corresponding torsion subcategory is $\cu=\ca$.

\begin{theorem}[\cite{Keller11}] \label{thm:reconstruct-refined}
Suppose that the ground field equals $\C$ and
that conjecture~\ref{conj:KS} holds for $(Q,W)$ so that the refined DT-invariant
$\E_{Q,W}$ of (\ref{eq:refined-DT-invariant}) is well-defined.
If the cluster heart $\Sigma^{-1}\ca$ is reachable,
then $\E_{\ca}$ equals $\E_{Q,W}$.
\end{theorem}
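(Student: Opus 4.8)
The plan is to exploit the freedom granted by Theorem~\ref{thm:E-U}: since $\E_\ca=\E_{\cu,\alpha}$ (here $\cu=\ca$, $\cv=0$) is independent of the morphism $\alpha$, it suffices to evaluate the product for one convenient choice of $\alpha$ and to recognize the answer as the defining product~(\ref{eq:Reineke}) of $\E_{Q,W}$. First I would fix a discrete stability function $Z$ on $\ca=\nil(\cp(Q,W))$ whose Harder--Narasimhan chain realizes a reachable sequence of mutations from $S$ to $\Sigma^{-1}\ca$; the existence of such a $Z$ (equivalently, of a maximal green sequence) follows from reachability of $\Sigma^{-1}\ca$, which by the theorem of section~\ref{ss:action-autoequivalences-der-cat} also forces $(Q,W)$ to be Jacobi-finite. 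Consequently $Z$ has only finitely many stable dimension vectors, so that $\E_{Q,W}=\prod^\curvearrowright_{M\text{ stable}}\E(y^{\dimv M})$ is a \emph{finite} product in $\A_Q$, matching the finiteness of $\E_\ca$.

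The core of the argument is to match the decreasing chain of torsion subcategories $\ca_{\geq\mu}$ attached to $Z$ (Proposition~\ref{prop:King}) with the path $\alpha$ of nearby mutations. As $\mu$ decreases from $+\infty$ to $-\infty$, the torsion pair $(\ca_{\geq\mu},\ca_{<\mu})$ sweeps monotonically from $(0,\ca)$, whose nearby heart is $\ca=S$, to $(\ca,0)$, whose nearby heart is $\Sigma^{-1}\ca=S'$. Since $Z$ is discrete, the chain jumps only at the finitely many phases carrying a (unique) stable object $M$, and at each such wall the torsion class grows by $\add M$; I would identify this elementary tilt with a single mutation $\mu_{k}$ of the intervening nearby cluster collection at the vertex whose simple is $M$. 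Because the sweep is monotone, each such mutation is \emph{green}: the object $M$ being tilted lies in $\ca$, so in the notation of section~\ref{ss:nearby-cluster-collections} one has $\eps=+1$ and $\eps\beta=\dimv M$, and the contributed factor is exactly $\E(y^{\dimv M})$. Reading the walls in order of decreasing phase then gives
\[
\E_{\cu,\alpha}=\prod^\curvearrowright_{M\text{ stable}}\E(y^{\dimv M})=\E_{Q,W},
\]
the last equality being the definition~(\ref{eq:refined-DT-invariant}); independence of $\alpha$ via Theorem~\ref{thm:E-U} yields $\E_\ca=\E_{Q,W}$. As a check, for $Q:1\to2$ the two generic choices of $Z$ reproduce, through the paths~(\ref{eq:ccg-m1}) and~(\ref{eq:ccg-m2}), the two sides $\E(y_1)\E(y_2)$ and $\E(y_2)\E(q^{-1/2}y_1y_2)\E(y_1)$ of the pentagon~(\ref{eq:SFK-identity}).

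The step I expect to be the main obstacle is justifying that this Harder--Narasimhan sweep really is a morphism of $\ncc$, \ie that each intermediate nearby heart is a genuine \emph{cluster} heart (its simples spherical and forming a cluster collection) and that the elementary tilt at the brick $M$ is a single mutation carrying precisely the sign $\eps$ predicted above. This is the categorified analogue of the final ``transport'' step in Reineke's proof (section~\ref{s:Proof-of-Reinekes-theorem}), where the Hall-algebra identity~(\ref{eq:Hall-algebra-identity}) was pushed forward to the algebra of power series; here that role is played by the wall-and-chamber combinatorics of the nearby groupoid. I would supply it by invoking the sign-coherence of Theorem~\ref{thm:plus-minus} together with the mutation-compatibility of the projection $\rho$ in Theorem~\ref{thm:nearby-projection} (and the underlying results of Nagao and Plamondon), which guarantee both that the simples of a nearby heart lie in $\ca$ or in $\Sigma^{-1}\ca$ and that a wall-crossing is a single mutation with the correct sign. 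Once the chain is known to lie in $\ncc$, reachability of each partial collection is automatic, since it is reached from $S$ by the corresponding initial segment, and the computation above closes the proof.
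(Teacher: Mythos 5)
The paper itself gives no proof of this theorem (it is delegated to the reference ``in preparation''), so I can only measure your proposal against the strategy the surrounding text signposts. Your overall plan --- evaluate $\E_{\ca,\alpha}$ along a path from $\ca$ to $\Sigma^{-1}\ca$, match the factors with the product $\E_{Q,W,Z}$ over stables in decreasing phase order, and invoke Theorem~\ref{thm:E-U} for independence of the path --- is the intended one, and your worked check for $Q:1\to 2$ is correct. But the way you set up the matching contains a genuine gap: you start from the mutation path and assert that ``the existence of such a $Z$ \ldots follows from reachability of $\Sigma^{-1}\ca$''. Producing a discrete stability function whose Harder--Narasimhan walls realize a prescribed maximal chain of torsion classes is exactly the hard existence problem that already appears in the Dynkin case (the paper points out that the existence of $Z_2$ was a conjecture of Reineke, settled by Hille--Juteau, and that Reineke's own argument deliberately avoids it by working with the HN-type filtrations directly); the paper also flags the intended workaround when it says one could ``work more generally with decreasing chains of torsion subcategories'' instead of stability functions. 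The correct direction is the reverse of yours: the green path hands you a finite chain of torsion classes in $\ca$ for free, this chain yields a factorization of $\sum_{[\ca]}[M]$ in the (motivic) Hall algebra exactly as in~(\ref{eq:Hall-algebra-identity}), and one compares its image under the integration map with the image of the factorization attached to an arbitrary discrete stability function --- the two sides being $\E_{\ca}$ and $\E_{Q,W}$ respectively. No existence statement for a special $Z$ is needed.

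Two further points would not survive scrutiny as written. First, Jacobi-finiteness does not imply that a discrete stability function has only finitely many stables (a finite-dimensional Jacobian algebra can have infinitely many bricks with pairwise non-proportional classes), so the finiteness of the product $\E_{Q,W,Z}$ is not automatic from the theorem in section~\ref{ss:action-autoequivalences-der-cat}. Second, the step you yourself identify as the main obstacle --- that each elementary tilt at a stability wall is a single mutation of \emph{cluster} collections, with spherical new simples and the predicted sign --- cannot be supplied by Theorems~\ref{thm:plus-minus} and~\ref{thm:nearby-projection}: those results presuppose that one already has cluster collections related by mutations and describe how the projection $\rho$ and the classes behave; they do not show that a torsion-pair tilt inside $\ca$ produces a cluster heart. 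That identification is genuinely additional input (it is where the work of Nagao and Plamondon enters), and deferring it to results that assume its conclusion makes the argument circular at this point. With the direction of the comparison reversed as above, this difficulty is absorbed into the hypotheses: the intermediate collections are cluster collections because they are obtained from $S$ by mutations, by definition of reachability.
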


\subsection{On the proof of the main theorem} \label{ss:proof-main-thm}
To prove theorem~\ref{thm:main-thm}, by
lemma~\ref{lemma:mutation-involutive}, it suffices to show
that if $\alpha: S \to S'$ is a morphism of $\ccl$ to a reachable
nearby cluster collection, then $\FG(\alpha)$ only depends on $S'$.
The proof \cite{Keller11} of this fact uses
\begin{itemize}
\item[1)] the technique of the `double torus' (\cf \eg \cite{FockGoncharov09a})
to reduce the statement to a statement about seeds in quantum cluster algebras;
\item[2)] Berenstein-Zelevinsky's theorem which states that
the exchange graph of the quantum cluster algebra of a quiver is
canonically isomorphic to the exchange graph of its cluster algebra
(Theorem~6.18 of \cite{BerensteinZelevinsky05});
\item[3)] the expression of the (classical) cluster variables in
terms of quiver Grassmannians first obtained in this generality
by Derksen-Weyman-Zelevinsky \cite{DerksenWeymanZelevinsky10},
\cf also \cite{Nagao10} \cite{Plamondon10a}.
\end{itemize}

Among these three ingredients, the third one is perhaps the deepest.
Let us make it explicit: Let $S'$ be a reachable nearby cluster
collection. After permuting its objects, we may assume that
there is a sequence of mutations transforming the initial
cluster collection $S$ to the given cluster collection $S'$.
This sequence determines a vertex $t$ in the $n$-regular
tree and thus a cluster $(X_i(t), 1\leq i\leq n)$ in the cluster
algebra associated with $Q$, \cf \cite{FominZelevinsky07}.
Following \cite{Nagao10} and \cite{Plamondon10b}, we can
express the cluster variables $X_j(t)$ in terms of the cluster
collection $S'$ as follows: Let $(T'_1, \ldots, T'_n)$ be the
silting sequence associated with $S'$ and $(T_1, \ldots, T_n)$
the initial silting sequence. Define the integers
$g_{ij}$, $1\leq i,j \leq n$, by the equality
\[
[T'_j] = \sum_{i=1}^n g_{ij} [T_i]
\]
in $K_0(\per\Gamma)$. Then we have
\[
X_j(t) = \prod_{i=1}^n x_i^{g_{ij}} \sum_e \chi(\Gr_e(H^1(T'_j)))
\prod_{i=1}^n x_i^{\langle S_i,e\rangle} \ko
\]
where $\Gr_e$ is the Grassmannian of submodules of dimension
vector $e$ and $\chi$ the Euler characteristic (for singular cohomology
with rational coefficients of the underlying topological space).

\subsection{The tropical groupoid} \label{ss:tropical-groupoid}
We will exhibit a groupoid
defined in purely combinatorial terms which, if the potential
$W$ is generic, is isomorphic to
the groupoid of nearby cluster collections (and thus admits
a full surjective functor to the
groupoid of reachable cluster collections modulo the
braid group action and to the groupoid of tilting sequences in the
cluster category). Let $\tilde{Q}$ be the quiver obtained from $Q$ by adding a
new vertex $i'$ and a new arrow $i\to i'$ for each vertex $i$ of $Q$.
The new vertices $i'$ are called {\em frozen} because we never
mutate at them.
The {\em tropical groupoid $\trp=\trp_Q$}
has as objects all the quivers obtained from $\tilde{Q}$ by mutating
at the non frozen vertices $1, \ldots, n$. Its morphisms are formal
compositions of mutations at the non frozen vertices
and permutations of these vertices as in the definition
of the groupoid of cluster tilting sequences in
section~\ref{ss:action-autoequivalences-cluster-cat}.

We construct a morphism of groupoids $\ncc\to\trp$ as follows:
For a reachable nearby cluster collection $S'$, define
the quiver $q(S')$ to have the same vertices as $\tilde{Q}$,
such that the full subquiver on $1, \ldots, n$ is the
$\Ext$-quiver of $S'$, there are no arrows between
frozen vertices, and for each old vertex $i$, the number of
arrows from $i$ to a frozen vertex $j'$ is the integer $b_{ij}$
defined by the equality
\[
[S'_i] = \sum_{j=1}^n b_{ij} [S_j]
\]
in $K_0(\cd_{fd}\Gamma)$. Here, if $b_{ij}$ is negative, we draw
$-b_{ij}$ arrows from $j'$ to $i$. From theorem~\ref{thm:plus-minus},
we deduce the following corollary.

\begin{corollary} \label{cor:tropical-groupoid}
\begin{itemize}
\item[a)]  The quiver $q(S')$ uniquely determines $S'$.
\item[b)]  The map $S' \mapsto q(S')$ underlies a unique isomorphism
of groupoids $\ncc \iso \trp$.
\end{itemize}
\end{corollary}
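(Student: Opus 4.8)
The plan is to view $q(S')$ as an extended exchange quiver: its full subquiver on the non-frozen vertices records the $\Ext$-quiver $Q_{S'}$, while its frozen arrows record, through the integers $b_{ij}$, the classes $[S_i'] = \sum_{j} b_{ij}[S_j]$ in $K_0(\cd_{fd}\Gamma)$. With this reading, part a) is immediate from Theorem~\ref{thm:plus-minus}: the frozen part of $q(S')$ determines the matrix $(b_{ij})$ and hence all the classes $[S_i']$, and by part b) of that theorem these classes determine the nearby cluster collection $S'$.

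For part b) I would first note that both groupoids are generated from a distinguished initial object by mutations and permutations at the non-frozen vertices --- the object $S$ for $\ncc$ and the quiver $\tilde{Q}$ for $\trp$ --- and that $q$ matches these initial objects, since $b_{ij} = \delta_{ij}$ gives precisely one arrow $i \to i'$ to each frozen vertex while the $\Ext$-quiver of $S$ is $Q$, so $q(S) = \tilde{Q}$. The heart of the argument is then a compatibility lemma: for any reachable nearby cluster collection $S'$ and any non-frozen vertex $k$, the nearby mutation $\mu_k(S')$ (which is everywhere defined, with a sign determined by Theorem~\ref{thm:nearby-projection}) satisfies $q(\mu_k(S')) = \mu_k(q(S'))$, where the right-hand side is ordinary quiver mutation at $k$.

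To establish this lemma I would analyze the two parts of $q(S')$ separately. The non-frozen part, the $\Ext$-quiver $Q_{S'}$, mutates by Derksen--Weyman--Zelevinsky quiver mutation, which is the standard behaviour of the quiver under mutation of cluster collections. For the frozen part I would combine the explicit description of the simples of a mutated heart from section~\ref{ss:comparison-of-categories} with the resulting effect on the classes $[S_i']$: mutation at $k$ sends $S_k'$ to $\Sigma^{\pm 1} S_k'$, so $[S_k']$ changes sign; it fixes the $S_i'$ with $\Ext^1(S_k', S_i') = 0$; and it replaces the remaining $S_i'$ by $\tw_{S_k'}(S_i')$, whose class differs from $[S_i']$ by an integer multiple of $[S_k']$. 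Expanding in the basis $[S_j]$, this is exactly the $c$-vector recurrence. The crucial point --- and the main obstacle --- is that sign-coherence, namely part a) of Theorem~\ref{thm:plus-minus}, which asserts that each $[S_i']$ lies in the positive cone $K_0^+(\ca)$ or in its negative, is precisely what collapses this recurrence into the combinatorial rule by which the frozen arrows of $q(S')$ transform under quiver mutation; without it the twist-functor formula would not specialize to matrix mutation.

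Granting the compatibility lemma, the remainder is formal. The map $S' \mapsto q(S')$ carries the initial object to the initial object and intertwines mutations with mutations and permutations with permutations, so it defines a functor $\ncc \to \trp$, and it respects the defining relations --- the symmetric-group relations and $\sigma \circ \mu_k = \mu_{\sigma(k)} \circ \sigma$ --- because these hold verbatim in both groupoids. It is injective on objects by part a) and surjective because every object of $\trp$ arises from $\tilde{Q}$ along some mutation sequence, which lifts by Theorem~\ref{thm:nearby-projection} to a sequence of nearby mutations producing a reachable nearby cluster collection with the prescribed quiver. As $q$ then carries the generators of $\ncc$ bijectively onto those of $\trp$ and the two groupoids share a common presentation, it is an isomorphism; and it is the unique isomorphism over the given object map, since a groupoid isomorphism lying over a fixed bijection on objects is pinned down by its values on mutations and permutations.
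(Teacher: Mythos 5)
Your proposal is correct and follows essentially the same route as the paper, which simply states that the corollary is deduced from Theorem~\ref{thm:plus-minus}: part b) of that theorem gives your part a), and its sign-coherence statement a), together with Theorem~\ref{thm:nearby-projection} and the description of mutated hearts, is exactly what makes the frozen arrows of $q(S')$ follow the combinatorial mutation rule. Your write-up is a legitimate and accurate expansion of the one-line deduction the paper leaves implicit.
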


Now we can give a purely combinatorial version of theorem~\ref{thm:E-U}:
Let $\mathbf{k}=(k_1, \ldots, k_N)$ be a
sequence of vertices of $Q$ (no frozen vertices are allowed to occur).
Let $\mu_{\mathbf{k}}(\tilde{Q})$ be the quiver
\[
\mu_{k_{N}} \mu_{k_{N-1}} \ldots \mu_{k_1}(\tilde{Q}) \ko
\]
and, more generally, for each $1\leq s\leq N$, let $\tilde{Q}(\mathbf{k},s)$
be the quiver
\[
\mu_{k_{s-1}} \mu_{k_{s-2}} \ldots \mu_{k_1}(\tilde{Q}).
\]
Let $B^{\tilde{Q}(\mathbf{k},s)}$ be the antisymmetric matrix associated
with this quiver and let $\beta_s$ be the vector
\[
\beta_s = \sum_{j=1}^n b_{k_s, j'}^{\tilde{Q}(\mathbf{k},s)} e_j
\]
in $\Z^n$. We know from theorem~\ref{thm:plus-minus} that either
all components of $\beta_s$ are non negative or all are non positive.
We put $\eps_s=+1$ in the first case and $\eps_s=-1$ in the second.
Now we define
\[
\E(\mathbf{k}) = \E(\eps_1 \beta_1)^{\eps_1} \E(\eps_2 \beta_2)^{\eps_2} \cdots \E(\eps_N \beta_N)^{\eps_N}.
\]
Let $\mathbf{k}'$ be another sequence of vertices of $Q$.

\begin{theorem} If there is an isomorphism of quivers
\[
\mu_{\mathbf{k}}(\tilde{Q}) \iso \mu_{\mathbf{k}'}(\tilde{Q})
\]
which is the identity on the frozen vertices $j'$, $1\leq j\leq n$,
then we have
\[
\E(\mathbf{k}) = \E(\mathbf{k}')
\]
in $\hat{\A}_Q$.
\end{theorem}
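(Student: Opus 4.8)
The plan is to reduce the assertion to Theorem~\ref{thm:E-U} by means of the isomorphism of groupoids $\ncc \iso \trp$ of Corollary~\ref{cor:tropical-groupoid}. Under this isomorphism a sequence $\mathbf{k}=(k_1,\ldots,k_N)$ of mutations at the non-frozen vertices of $\tilde{Q}$ corresponds to a morphism $\alpha: S \to S'$ of $\ncc$, where $S$ is the initial nearby cluster collection and $S'$ is the reachable nearby cluster collection with $q(S')=\mu_{\mathbf{k}}(\tilde{Q})$. Permuting the objects of $S'$ if necessary, I may assume that $\alpha$ involves no permutations, so that it is a composition of mutations $S=S^{(0)} \to S^{(1)} \to \cdots \to S^{(N)}=S'$ of nearby cluster collections, exactly as in the hypotheses of Theorem~\ref{thm:E-U}.

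The key step is to identify the combinatorial element $\E(\mathbf{k})$ with the categorical element $\E_{\cu,\alpha}$ attached to $\alpha$ in Theorem~\ref{thm:E-U}, where $\cu\subset\ca$ is the torsion subcategory determined by the target nearby cluster heart $\ca'$. By construction of the isomorphism $q$, for each intermediate collection $S^{(s)}$ the arrows from an old vertex $i$ to a frozen vertex $j'$ in $q(S^{(s)})=\tilde{Q}(\mathbf{k},s)$ record the coefficient $b_{ij}$ in the expansion $[S^{(s)}_i]=\sum_j b_{ij}[S_j]$ in $K_0(\cd_{fd}\Gamma)$. Hence the vector $\beta_s=\sum_j b^{\tilde{Q}(\mathbf{k},s)}_{k_s,j'}e_j$ is precisely the class $[S^{(s)}_{k_s}]$ expressed in the basis of the $[S_j]$, which is the datum entering $\E_{\cu,\alpha}$. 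Moreover the signs agree: sign-coherence (Theorem~\ref{thm:plus-minus}a)) guarantees that each $\beta_s$ lies either in the positive cone determined by $\ca$ or in its opposite, so the combinatorial sign $\eps_s$ coincides with the categorical one. The two products therefore agree factor by factor, giving $\E(\mathbf{k})=\E_{\cu,\alpha}$.

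It remains to feed in the hypothesis. An isomorphism $\mu_{\mathbf{k}}(\tilde{Q}) \iso \mu_{\mathbf{k}'}(\tilde{Q})$ that is the identity on the frozen vertices is the same as an isomorphism $q(S') \iso q(S'')$ of augmented quivers fixing the frozen part, where $S'$ and $S''$ are the respective target nearby cluster collections. By Corollary~\ref{cor:tropical-groupoid}a) the quiver $q(\,\cdot\,)$ determines the nearby cluster collection uniquely, so $S'=S''$ after the induced permutation of objects; in particular the two morphisms $\alpha$ and $\alpha'$ have the same target and hence the same associated torsion subcategory $\cu$. Theorem~\ref{thm:E-U} asserts that $\E_{\cu,\alpha}$ is independent of the chosen morphism into $S'$, whence $\E(\mathbf{k})=\E_{\cu,\alpha}=\E_{\cu,\alpha'}=\E(\mathbf{k}')$.

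The main obstacle is the dictionary in the second paragraph: one must verify carefully that the frozen columns of the iterated matrix mutation $\tilde{Q}(\mathbf{k},s)$ literally compute the $K$-theory classes $[S^{(s)}_{k_s}]$ of the nearby cluster collections in the basis of simples, with matching sign conventions. This amounts to checking that the $c$-vector mutation rule built into the frozen part of the extended exchange matrix is compatible with the mutation behaviour of nearby cluster collections recorded in Theorem~\ref{thm:nearby-projection}b); once this bookkeeping is in place, the theorem follows formally from Theorem~\ref{thm:E-U}.
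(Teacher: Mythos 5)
Your reduction to Theorem~\ref{thm:E-U} via the groupoid isomorphism $\ncc\iso\trp$ of Corollary~\ref{cor:tropical-groupoid}, together with the dictionary identifying the frozen columns with the classes $[S^{(s)}_{k_s}]$ in the basis of the $[S_j]$, is exactly how the paper intends this statement to follow (it is introduced there as ``a purely combinatorial version of Theorem~\ref{thm:E-U}'' and given no separate proof). Your argument is correct and takes essentially the same route; only the off-by-one bookkeeping between $\tilde{Q}(\mathbf{k},s)=\mu_{k_{s-1}}\cdots\mu_{k_1}(\tilde{Q})$ and the collection $S^{(s-1)}$ (rather than $S^{(s)}$) deserves the careful check you yourself flag at the end.
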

For example, if we apply the theorem to $Q:1 \to 2$ and the sequences
$\mathbf{k}=(1,2)$ and $\mathbf{k}'=(2,1,2)$,
\cf figure~\ref{fig:max-green-sequences}, then all the $\beta_s$
are positive and we find the pentagon
identity~(\ref{eq:SFK-identity}). If we use $\mathbf{k}=(1,2,1)$ and
$\mathbf{k}'=(2,1)$ instead, then for $\mathbf{k}$, the vector $\beta_3$
is negative and we find the identity
\[
\E(y_1) \E(y_2) \E(y_1)^{-1} = \E(y_2) \E(q^{-1/2}y_1 y_2) \ko
\]
which is of course equivalent to (\ref{eq:SFK-identity}).

Let $\tilde{Q}'$ be a quiver of the tropical groupoid $\trp_Q$.
Define a non frozen vertex $i$ of $\tilde{Q}'$ to be {\em green}
if there are no arrows from frozen vertices to $i$ and {\em red}
otherwise. Define a sequence $\mathbf{k}$ of green vertices
of $\tilde{Q}'$ to be {\em maximal} if all non frozen vertices
of $\mu_{\mathbf{k}}(\tilde{Q}')$ are red. In many cases, the
following proposition allows one to construct the refined
DT-invariant combinatorially (\cf also section~5.2, page~49
of \cite{GaiottoMooreNeitzke10}).

\begin{proposition} Suppose the $\tilde{Q}$ admits a maximal
green sequence $\mathbf{k}$. Then the cluster heart $\Sigma^{-1}\ca$
is reachable and, if the refined DT-invariant $\E_{Q,W}$ is
well-defined, it equals $\E(\mathbf{k})$.
\end{proposition}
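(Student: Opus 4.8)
The plan is to connect the combinatorial object $\mathbf{k}$ to the categorical machinery of the preceding sections via the isomorphism of Corollary~\ref{cor:tropical-groupoid}. First I would use part~b) of that corollary, which provides an isomorphism of groupoids $\ncc \iso \trp$, to reinterpret the maximal green sequence $\mathbf{k}$ as a morphism $\alpha: S \to S'$ in the groupoid $\ncc$ of reachable nearby cluster collections. The key translation is between the colouring of the frozen-vertex arrows in $\trp$ and the position of the simples $S'_i$ relative to the heart $\ca$: a non frozen vertex $i$ of $\tilde{Q}'$ is green precisely when the corresponding class is a positive combination of the $[S_j]$, and red when it is negative, which by Theorem~\ref{thm:plus-minus}~a) corresponds exactly to whether the simple lies in $\ca$ or in $\Sigma^{-1}\ca$. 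Thus the condition that $\mathbf{k}$ is \emph{maximal green} — all non frozen vertices of $\mu_{\mathbf{k}}(\tilde{Q})$ being red — translates into the statement that the target cluster heart $\ca'$ associated with $S'$ has all its simples in $\Sigma^{-1}\ca$, i.e. $\ca' = \Sigma^{-1}\ca$.

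Once this identification is in place, the first conclusion is immediate: the sequence $\mathbf{k}$ exhibits $\Sigma^{-1}\ca$ as reachable by a composition of mutations, so the cluster heart $\Sigma^{-1}\ca$ is reachable, as claimed. I would then match the combinatorial invariant $\E(\mathbf{k})$ with the categorical invariant $\E_{\cu,\alpha}$ of Theorem~\ref{thm:E-U}. This is a bookkeeping step: the vectors $\beta_s$ built from the frozen columns $b_{k_s,j'}^{\tilde{Q}(\mathbf{k},s)}$ of the mutated quiver are, under the isomorphism $\ncc \iso \trp$, exactly the classes $\beta_i = [(S^{(i)})_{k_i}] \in K_0(\cd_{fd}\Gamma)$ appearing in the definition of $\E_{\cu,\alpha}$ preceding Theorem~\ref{thm:E-U}, and the signs $\eps_s$ agree by the same green/red versus $\ca$ / $\Sigma^{-1}\ca$ dictionary. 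Hence $\E(\mathbf{k}) = \E_{\cu,\alpha} = \E_{\cu}$, where $\cu$ is the torsion subcategory attached to $\ca' = \Sigma^{-1}\ca$. The remark at the end of section~\ref{ss:nearby-cluster-collections} records that for $\ca' = \Sigma^{-1}\ca$ this torsion subcategory is $\cu = \ca$, so in fact $\E(\mathbf{k}) = \E_{\ca}$.

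The final step is then a direct appeal to Theorem~\ref{thm:reconstruct-refined}: assuming the ground field is $\C$ and that Conjecture~\ref{conj:KS} holds so that $\E_{Q,W}$ is well-defined, and given that $\Sigma^{-1}\ca$ is reachable (established above), that theorem asserts $\E_{\ca} = \E_{Q,W}$. Combining the two equalities yields $\E(\mathbf{k}) = \E_{Q,W}$, completing the proof.

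The part I expect to be the main obstacle is the careful identification in the second paragraph — verifying that the combinatorial data $(\beta_s, \eps_s)$ extracted from the frozen columns of the tropical mutations coincide, term by term and sign by sign, with the categorical data $(\beta_i, \eps_i)$ under the isomorphism $\ncc \iso \trp$ of Corollary~\ref{cor:tropical-groupoid}. This requires tracking how mutation of the extended quiver $\tilde{Q}$ at a non frozen vertex encodes the change of basis in $K_0(\cd_{fd}\Gamma)$, and confirming that the maximal-green termination condition is genuinely equivalent to reaching the heart $\Sigma^{-1}\ca$ rather than merely some heart whose simples happen to lie in $\Sigma^{-1}\ca$ componentwise. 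Everything else is then a formal consequence of the already-established Theorems~\ref{thm:E-U} and~\ref{thm:reconstruct-refined}.
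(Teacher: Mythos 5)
Your proof is correct and follows exactly the route the paper intends: the proposition is stated without an explicit proof, but it is designed to be the immediate assembly of Corollary~\ref{cor:tropical-groupoid}, the green/red dictionary coming from Theorem~\ref{thm:plus-minus}, and Theorems~\ref{thm:E-U} and~\ref{thm:reconstruct-refined}, which is precisely what you do. Your two flagged concerns resolve as expected: ``all non frozen vertices red'' forces every simple of $\ca'$ into $\Sigma^{-1}\ca$, hence $\ca'\subseteq\Sigma^{-1}\ca$ by closure under extensions and therefore $\ca'=\Sigma^{-1}\ca$ since both are hearts of bounded non degenerate $t$-structures, and the matching of $(\beta_s,\eps_s)$ with the categorical data is the tautological translation built into the definition of $q(S')$.
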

In figure~\ref{fig:max-green-sequences}, the two maximal
green sequences for the quiver $\vec{A}_2$ are given
(green vertices are encircled).
Examples of classes of quivers $Q$ to which the proposition applies
include those enumerated in section~\ref{ss:rational-case}.
The quiver mutation applet \cite{KellerQuiverMutationApplet}
makes it easy to search for maximal green sequences.
They exist for acyclic quivers, for square products of
acyclic quivers and also for the quivers associated
in \cite{BuanIyamaReitenScott09} with each pair consisting of an
ayclic quiver and a reduced expression of an element
in the Coxeter group associated with its underlying
graph. For this case, a maximal green sequence is
constructed in section~12 of \cite{GeissLeclercSchroeer10}.

\begin{figure}
\[
\def\g#1{\save [].[dr]!C *++\frm{}="g#1" \restore}
\xymatrix{
 & &  *+[o][F-]{1}\g1 \ar[r] \ar[d] & *+[o][F-]{2} \ar[d] & & *+[o][F-]{1}\g2  \ar[d] \ar[dr] & 2 \ar[l] & &  \\
 & &   1'                & 2' & &    1'               & 2' \ar[u]\\
\g3 1 & *+[o][F-]{2} \ar[l] \ar[d]  &  &  &   &    &   & \g4 1 \ar[r] & *+[o][F-]{2} \ar[dl] \\
1'\ar[u] & 2'            &  &  &   &    &   &  1' \ar[u] & 2' \ar[ul] \\
 & & \g5 1 \ar[r] & 2         &  &  \g6 1 & 2 \ar[l] \\
 & &    1' \ar[u] & 2' \ar[u] &  &   1' \ar[ur] & 2' \ar[ul]
 \ar@{->}^{\mu_2} "g1"; "g2"
 \ar@<-20pt>@{->}_{\mu_1} "g1"; "g3"
 \ar@<20pt>@{->}^{\mu_1} "g2"; "g4"
 \ar@<-20pt>@{->}_{\mu_2} "g3"; "g5"
 \ar@<20pt>@{->}^{\mu_2} "g4"; "g6"
 \ar@{-}^{\mbox{\small isom}} "g5"; "g6"
 \ar_{\mu_{12}} "g1"; "g5"
 \ar^{\mu_{212}} "g1"; "g6"
 }
\]
\caption{The two maximal green sequences for $A_2$}
\label{fig:max-green-sequences}
\end{figure}



\def\cprime{$'$} \def\cprime{$'$}
\providecommand{\bysame}{\leavevmode\hbox to3em{\hrulefill}\thinspace}
\providecommand{\MR}{\relax\ifhmode\unskip\space\fi MR }
\providecommand{\MRhref}[2]{%
  \href{http://www.ams.org/mathscinet-getitem?mr=#1}{#2}
}
\providecommand{\href}[2]{#2}

\end{document}